\documentclass{article}

\usepackage{amssymb}
\usepackage{amsmath}
\usepackage{amsfonts}
\usepackage{amsthm}
\usepackage{caption,subcaption}
\usepackage{mathabx}
\usepackage{graphicx}
\usepackage{cite}
\usepackage{tikz}
\usepackage{placeins}
\usepackage{float}
\usepackage{pstricks}
\usepackage{tikz-cd}

\usepackage[breaklinks=true]{hyperref}

\usetikzlibrary{arrows,chains,matrix,positioning,scopes}

\newtheorem{thm}{Theorem}[section]
\newtheorem{theorem}[thm]{Theorem}
\newtheorem{lem}[thm]{Lemma}

\newtheorem{prop}[thm]{Proposition}
\newtheorem{cor}[thm]{Corollary}

\newtheorem{thmm}{Theorem}

\newenvironment{customthm}[1]
  {\innercustomthm}
  {\endinnercustomthm}

\theoremstyle{definition}
\newtheorem{definition}[thm]{Definition}

\newtheorem{notation}[thm]{Notation}
\newtheorem*{unnumberednotation}{Notation}

\theoremstyle{remark}
\newtheorem{rem}{Remark}

\newtheorem*{prop proof}{Proof of Proposition}

\newcommand{\RR}{\mathbb{R}}      
\newcommand{\ZZ}{\mathbb{Z}}        
\newcommand{\NN}{\mathbb{N}}      
\newcommand{\EE}{\mathbb{E}}
\newcommand{\lk}{{\rm{Lk}}}

\newcommand{\abs}[1]{\left\lvert#1\right\rvert}

\numberwithin{equation}{section}

\title{Hyperbolic Groups with Finitely Presented Subgroups not of Type $F_3$}

\author{Robert Kropholler \\ with an appendix by Giles Gardam}

\begin{document}
\maketitle

\begin{abstract}
We generalise the constructions in \cite{brady_branched_1999} and \cite{lodha_finiteness_2017} to give infinite families of hyperbolic groups, each having a finitely presented subgroup that is not of type $F_3$. By calculating the Euler characteristic of the hyperbolic groups constructed, we prove that infinitely many of them are pairwise non isomorphic. We further show that the first of these constructions cannot be generalised to dimensions higher than $3$.
\end{abstract}

\tableofcontents

\section{Introduction}

In this paper we look at subgroups of hyperbolic groups: these are not in general hyperbolic. For instance, one can take a non-trivial infinite index normal subgroup of a free group: this is not finitely generated. For elaborate examples see \cite{brady_branched_1999} and \cite{lodha_finiteness_2017}. In \cite{brady_branched_1999} there is an example of a graph $\Theta$ and a branched cover of $\Theta^3$ which has hyperbolic fundamental group. This group is shown to have a subgroup which is finitely presented but not hyperbolic. We show that one can change the graph $\Theta$ as long as a certain condition, 2-fullness, is preserved. We establish a generalisation of this result by proving the following theorem.

\begin{thmm}
Let $\Gamma_1$, $\Gamma_2$ and $\Gamma_3$ be 2-full graphs where every vertex has valence at least 4. There is a branched cover of $\Gamma_1\times\Gamma_2\times\Gamma_3$ whose fundamental group is hyperbolic and contains a finitely presented subgroup that is not hyperbolic. 
\end{thmm}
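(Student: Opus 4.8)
The plan is to run, for arbitrary $2$-full graphs of valence $\geq 4$, the branched-covering construction that Brady \cite{brady_branched_1999} carried out for his one specific graph $\Theta$, verifying that $2$-fullness is the only combinatorial feature that is really needed. To begin, set $Y:=\Gamma_1\times\Gamma_2\times\Gamma_3$: this is a compact non-positively curved cube complex of dimension $3$, and $\pi_1 Y\cong\pi_1\Gamma_1\times\pi_1\Gamma_2\times\pi_1\Gamma_3$ is a product of non-trivial free groups, hence as far from hyperbolic as possible. Since every vertex of every $\Gamma_i$ has valence $\geq 4$ I can orient the edges of each $\Gamma_i$ so that every vertex is the head of some edge and the tail of some edge; recording these orientations gives combinatorial maps $\Gamma_i\to S^1$ (each edge running once around $S^1=\RR/\ZZ$), and composing $Y\to(S^1)^3$ with the multiplication map $(S^1)^3\to S^1$ yields a combinatorial circle-valued Morse function $\phi\colon Y\to S^1$. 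At a vertex $(v_1,v_2,v_3)$ the ascending and descending links of $\phi$ are certain full subcomplexes of the spherical join $\lk(v_1)\ast\lk(v_2)\ast\lk(v_3)$, each join factor a set of $\geq 4$ points.

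Next I would form Brady's branched cover. Let $B\subset Y$ be the codimension-$2$ subcomplex (a union of graphs built from the $\Gamma_i$) used in \cite{brady_branched_1999}; its essential property is that the preimage of $B$ in the universal cover $\widetilde Y=\widetilde\Gamma_1\times\widetilde\Gamma_2\times\widetilde\Gamma_3$ meets every flat of $\widetilde Y$. Pick a homomorphism $\pi_1(Y\setminus B)\to Q$ onto a finite group sending the meridian of each component of $B$ to an element of order $\geq 2$, and let $\rho\colon X\to Y$ be the associated branched cover, metrized as a piecewise-Euclidean polyhedral complex in Brady's fashion: near $\rho^{-1}(B)$ the cubes of $Y$ are replaced by their branched analogues, so that the link of each vertex of $X$ is a branched cover of the corresponding join $\lk(v_1)\ast\lk(v_2)\ast\lk(v_3)$, branched over a subcomplex. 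The content of this step is that $2$-fullness together with valence $\geq 4$ forces each such branched link to be $\mathrm{CAT}(1)$; then $X$ is compact and non-positively curved, and $\bar\phi:=\phi\circ\rho\colon X\to S^1$ is again a circle-valued Morse function whose ascending and descending links are the branched covers of those of $\phi$.

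The main obstacle is hyperbolicity of $\pi_1 X$, and I would get it from the flat plane theorem: as $X$ is compact and non-positively curved, it suffices to show that the $\mathrm{CAT}(0)$ space $\widetilde X$ contains no isometrically embedded flat plane. Such a plane is flat at each of its points, so it cannot pass through any point of excess cone angle; in $\widetilde X$ these are exactly the points over $B$, precisely because the monodromy of $\rho$ around every component of $B$ is non-trivial. Hence the plane would map, under the lift $\widetilde X\to\widetilde Y$ of $\rho$, to a flat plane in $\widetilde Y$ disjoint from the preimage of $B$. But a product of three trees has only ``affine'' flats --- supported on products of bi-infinite geodesics of the three tree factors --- and by the defining property of $B$ every such flat meets the preimage of $B$ in a full-rank array of points; contradiction. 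Thus $\widetilde X$ has no flat plane and $\pi_1 X$ is hyperbolic. I expect the real work here to be the bookkeeping already present in \cite{brady_branched_1999}: arranging the monodromies so that all vertex links of $X$ stay $\mathrm{CAT}(1)$, and describing the flats of $\widetilde Y$ precisely enough to see they all cross $B$.

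It remains to produce the bad subgroup. The Morse function $\bar\phi$ induces a surjection $\pi_1 X\twoheadrightarrow\ZZ$; let $K$ be its kernel, a normal subgroup of the hyperbolic group $\pi_1 X$. By Bestvina--Brady Morse theory, in the branched-covering form established in \cite{brady_branched_1999}, $K$ is finitely presented provided every ascending and descending link of $\bar\phi$ is simply connected, and $K$ fails to be of type $FP_3$ as soon as one such link has $H_2\neq 0$. These links are built from the joins $\lk(v_1)\ast\lk(v_2)\ast\lk(v_3)$ --- a join of three discrete sets of size $\geq 4$ is already a wedge of $2$-spheres, hence simply connected with non-trivial (indeed large) second homology --- together with the prescribed branching, and $2$-fullness is exactly the hypothesis ensuring that simple connectivity persists after passing to the relevant full subcomplexes and branched covers, while valence $\geq 4$ keeps $H_2$ non-zero. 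Therefore $K$ is finitely presented but not of type $FP_3$, so not of type $F_3$, and in particular --- hyperbolic groups being of type $F_\infty$ --- not hyperbolic. This is the finitely presented, non-hyperbolic subgroup of the hyperbolic group $\pi_1 X$ required by the theorem.
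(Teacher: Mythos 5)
Your proposal follows the same broad outline as the paper — branch over a codimension-two graph in the product, take the Morse function coming from orientations of the edges, and get hyperbolicity from the flat plane theorem together with Bestvina--Brady to get the bad subgroup. But two of your steps that look routine are exactly where the paper has to work hardest, and as written neither goes through.

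\textbf{The finite cover has to do more than branch nontrivially.} You propose to ``pick a homomorphism $\pi_1(Y\setminus B)\to Q$ onto a finite group sending the meridian of each component of $B$ to an element of order $\geq 2$,'' and you claim this forces the links in $X$ to be $\mathrm{CAT}(1)$ with no loops of length $2\pi$. That is far too weak. Non-triviality of the meridian representation only guarantees that the branched cover is ramified; it does nothing to control the \emph{girth} of the new vertex links. The argument that a flat plane cannot pass through a branch point requires that the branched link of each vertex over $L$ have no loops of combinatorial length $4$, and that is a statement about \emph{every} length-$4$ loop in $\lk(v_2,\Gamma_2)\ast\lk(v_3,\Gamma_3)$, not just the meridian. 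The paper achieves this with an explicit representation: it chooses a prime $q_{ij}$ larger than all valences, permutations $\alpha,\beta\in S_{q_{ij}}$ with $\beta\alpha\beta^{-1}=\alpha^{l}$ for $l$ a primitive root mod $q_{ij}$, labels the edges of $\Gamma_i$ by distinct powers of $\alpha$ at each vertex and the edges of $\Gamma_j$ by distinct powers of $\beta$, and then checks by hand that every length-$4$ loop in the link maps to a commutator $[\beta^{-m},\alpha^{-n}]=\alpha^{n(l^m-1)}$ which is a nontrivial $q_{ij}$-cycle. Without some choice with this property, your branched links $\Lambda_v$ need not have girth $\geq 6$, and the angle argument collapses.

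\textbf{The flat-plane argument cannot be pushed down to $\widetilde Y$.} You argue: a flat plane avoids excess cone angle, so it stays off the preimage of $B$, hence maps isometrically to a flat plane of $\widetilde Y$ disjoint from the preimage of $B$, and ``a product of three trees has only `affine' flats --- supported on products of bi-infinite geodesics of the three tree factors --- and by the defining property of $B$ every such flat meets the preimage of $B$.'' This structural claim is false, or at least far from obvious: already in $\mathbb{R}\times\mathbb{R}\times\mathbb{R}$ the tilted plane $\{x+y+z=0\}$ is a flat that is not a product of geodesics of two factors with a point in the third, and in a genuine product of three trees there is no clean description of flats as ``supported on $\gamma_1\times\gamma_2\times\gamma_3$.'' The paper avoids this entirely: it works \emph{inside} $\widetilde X$, showing by a local development argument (through cubes near a type-1 vertex, using the fixed intersection pattern of $\widetilde L$ with each $3$-cube) that any isometrically embedded plane is forced to have a \emph{transverse} intersection with $\widetilde L$, and then kills that intersection with the angle/cone argument, splitting into the cases where the transverse point is interior to an edge of $\widetilde L$ (link of the edge has girth $\geq 6$) or a vertex (project off the $C_0(\lk(x,\widetilde L))$ factor of the CAT(0) cone using the join decomposition and flat-strip theorem). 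If you want to repair your approach you would need either a correct and proved classification of flats in products of three trees, or — more in line with the paper — a direct developing argument in $\widetilde X$.

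A smaller issue: you assert that $2$-fullness ``is exactly the hypothesis ensuring that simple connectivity persists'' for the ascending/descending links of type-2 vertices after branching. The paper does have to prove this (it covers $U_2\ast U_3$ by length-$4$ loops with connected preimages and concludes that the cover $\mathcal{U}$ is connected, hence $\mathcal{U}\ast U_1$ is simply connected), and it is not automatic from $2$-fullness alone; it again depends on the specific choice of branching representation making the length-$4$ loops have connected preimage.
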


A variation on this theme can be found in \cite{lodha_finiteness_2017}. There, another example of a finitely presented non-hyperbolic subgroup is obtained by considering a subcomplex of $K\times K\times K$ where $K$ is a complete bipartite graph. This construction relies on a graph $\Gamma$ with certain properties. We prove the following theorem which allows us to relax some of these properties and create smaller examples. 

\begin{thmm}
Let $\Gamma_1$, $\Gamma_2$ and $\Gamma_3$ be sizeable graphs with vertex sets $A_i\cup B_i, i=1, 2, 3$ and let $K_{ij}$ be the complete bipartite graph on $A_i$ and $B_j$. Then there exists $X\subset K_{13}\times K_{21}\times K_{32}$ such that $\pi_1(X)$ is hyperbolic and has a finitely presented subgroup that is not hyperbolic.
\end{thmm}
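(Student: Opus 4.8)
The proof will follow the pattern of the construction in \cite{lodha_finiteness_2017} (and, in spirit, of Theorem A): produce a non-positively curved cube complex $X$ inside the triple product, prove that $\pi_1(X)$ is hyperbolic by excluding flat planes, and obtain the required subgroup as the kernel of a map $\pi_1(X)\to\mathbb{Z}$ via Bestvina--Brady Morse theory. The point is that the various extra conditions imposed on the graphs in \cite{lodha_finiteness_2017} are replaced here by the single hypothesis ``sizeable'', so the real task is to check that sizeability is still enough for every stage of the argument to run.

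First I would write down $X$. Using the cyclic labelling of the factors, declare a cube $\sigma_1\times\sigma_2\times\sigma_3$ of $K_{13}\times K_{21}\times K_{32}$ to lie in $X$ exactly when the two coordinates drawn from a common $\Gamma_i$ span an edge of $\Gamma_i$ (the natural analogue of Lodha's complex); the definition of sizeable should be arranged so that, on the one hand, enough cubes survive and, on the other, the surviving cubes cannot be fitted together into an arbitrarily large flat block. One checks the Gromov link condition — the links of vertices are flag, being joins of discrete sets with certain simplices deleted — so the universal cover $\widetilde X$ is CAT$(0)$. Next, fix in each factor a labelling of the edges by $\pm1$ so that every vertex of $X$ has both an incident $+$ edge and an incident $-$ edge in each of the three factor directions; collapsing the vertices of $X$ to the basepoint of $S^1$ and wrapping each edge once around with its sign gives a combinatorial Morse function $h\colon X\to S^1$, whose induced map $\phi=h_{*}\colon\pi_1(X)\to\mathbb{Z}$ is onto. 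Put $H=\ker\phi$. The role of the signing is that each ascending link $\mathrm{lk}^{\uparrow}(v)$ and descending link $\mathrm{lk}^{\downarrow}(v)$ of $X$ is assembled from joins of discrete sets: sizeability makes each of them simply connected, while forcing at least one of them to be homotopy equivalent to a nontrivial wedge of $2$-spheres.

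Hyperbolicity of $\pi_1(X)$ then comes from the Flat Plane Theorem: since $\widetilde X$ is CAT$(0)$ it suffices to rule out an isometrically embedded Euclidean plane in $\widetilde X$. Such a flat would force an infinite, essentially periodic configuration of cubes of $\widetilde X$; pushing this down to the finite complex and reading off one factor direction exhibits an essential cycle in one of the $\Gamma_i$ of precisely the type that sizeability prohibits. Proving this implication — sizeable implies no flat plane — is the geometrically substantive step, and I expect it to be the main obstacle; it is also where the weakened hypothesis must be pushed hardest.

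Finally, apply Bestvina--Brady Morse theory to $h$. Because every ascending and descending link of $X$ is simply connected, $H$ is of type $F_2$, i.e.\ finitely presented (a wedge of $2$-spheres is still simply connected, so this is consistent with the next point); because some ascending link has nonzero second homology, $H$ is not of type $FP_3$, hence not of type $F_3$. Since hyperbolic groups are of type $F_\infty$, it follows that $H$ is not hyperbolic, which completes the proof. The link computations needed here are intricate, but — once the definition of sizeable is unwound — essentially bookkeeping compared with the flat-plane exclusion.
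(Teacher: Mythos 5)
Your strategy is exactly the paper's: define a cubical subcomplex of $K_{13}\times K_{21}\times K_{32}$, build a Morse function on its universal cover from the $\pm$ partition of $A_i$ and $B_i$, apply Bestvina--Brady to the ascending and descending links, and exclude flat planes to get hyperbolicity. However, you leave two real gaps.

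First, and most seriously, the hyperbolicity argument. You say that a flat plane would ``force an infinite, essentially periodic configuration of cubes'' and that ``reading off one factor direction exhibits an essential cycle'' forbidden by sizeability, and you flag this as the main obstacle --- but this is not a proof, and the mechanism you hint at is not the one that actually works. Merely projecting a flat to one of the graph factors does not directly produce a short cycle, since the flat need not respect the product structure. The paper instead distinguishes, among the edges of $X$ joining two type 2 vertices, the \emph{$\Gamma$-edges}: those whose link in $X$ is isomorphic to one of the sizeable graphs $\Gamma_i$ (as opposed to a complete bipartite graph). It shows that any flat plane must meet the union $C$ of $\Gamma$-edges \emph{transversally} at some point (this requires a development argument near a type 1 vertex and a case analysis depending on whether the intersection point lies in the interior or at an endpoint of an edge), and then derives a contradiction from the angle sum: around such a transverse point, either the link of the edge is $\Gamma_i$, which has girth at least $6$ (no 4-cycles), or one passes to the CAT(0) cone $C_0(\Gamma_1)\times C_0(\lk(a_2,\Gamma_2))$, projects, and applies the Flat Strip Theorem to again produce a 4-cycle in $\Gamma_i$. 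Without identifying the correct subset $C$ and the transversality step, your argument does not close.

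Second, you describe the ascending and descending links uniformly as ``assembled from joins of discrete sets,'' but $X$ has two structurally different kinds of vertices. For a type 1 vertex (all $v_i\in A_i$ or all $v_i\in B_{i-1}$), the link is $N_{v_1}\ast N_{v_2}\ast N_{v_3}$, a join of three discrete sets, and the ascending link is a sub-join of three discrete sets (automatically simply connected, and giving the nonzero $H_2$ when each factor has size $\ge 2$). For a type 2 vertex the link is $\Gamma_i\ast N_{v_j}$ for one of the sizeable graphs $\Gamma_i$, and the ascending link is the join of a discrete set with an \emph{induced subgraph} $\Gamma_i(A^s\sqcup B^t)$ of $\Gamma_i$. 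The third bullet in the definition of sizeable --- connectivity of $\Gamma(A^s\sqcup B^t)$ for all choices of signs --- is precisely what makes this join simply connected. If you think of all ascending links as joins of discrete sets you never use the connectivity clause of sizeability, and for type 2 vertices the claim of simple connectivity is unjustified.

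Your description of $X$ itself is also a little off: the paper's $X$ is the full subcomplex spanned by vertices satisfying one of five conditions (including the two ``all in $A$'' / ``all in $B$'' conditions, not only the edge conditions), which your description as ``exactly when the two coordinates drawn from a common $\Gamma_i$ span an edge'' does not reproduce. This matters because it is precisely the mix of type 1 and type 2 vertices that produces the two kinds of ascending link above.
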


The terminology we use in these theorems is defined in sections 3 and 4.  

Throughout the paper we will use the failure of finiteness properties of groups to prove that some finitely presented subgroups of hyperbolic groups are not hyperbolic. 

\begin{definition}
A group $G$ satisfies {\em property $F_n$} if there is a $K(G, 1)$ with finite $n$ skeleton.
\end{definition}

$F_1$ is equivalent to being finitely generated and $F_2$ is equivalent to being finitely presented. 

\begin{definition}
A group $G$ is of {\em type $FP_n$} if there is a partial resolution of the trivial $\ZZ G$ module, 

\begin{figure*}[h]
\center
\begin{tikzpicture}[node distance=1.2cm, auto]
\node (Z) {$\ZZ$};
\node (P_0) [left of=Z] {$P_0$};
\node (P_1) [left of=P_0] {$P_1$};
\node (dots) [left of=P_1] {$\dots$};
\node (P_n) [left of=dots] {$P_n$};
\node (0r) [right of=Z] {$0$}; 

\draw[->,] (P_1) to node {} (P_0);
\draw[->,] (P_0) to node {} (Z);
\draw[->,] (dots) to node {} (P_1);
\draw[->,] (P_n) to node {} (dots);
\draw[<-,] (0r) to node {} (Z);
\end{tikzpicture}
\end{figure*}
where $P_i$ is a finitely generated projective $\ZZ G$ module.
\end{definition}

If $G$ is of type $F_n$, then $G$ is of type $FP_n$.

A theorem of Rips (for details see \cite[III.H.3.21]{bridson_metric_1999}) tells us that hyperbolic groups are $F_n$ for all $n$

\begin{theorem}[Rips]
Let $G$ be a hyperbolic group. Then there is a contractible simplicial complex $K$, known as the Rips complex, and a proper, cocompact action of $G$ on $K$.
\end{theorem}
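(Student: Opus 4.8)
The plan is to construct the \emph{Rips complex} $P_d(G)$ for a suitable $d$ and show it does the job. Fix a finite generating set $S$ of $G$ with word metric $d_S$, and let $\delta$ be a hyperbolicity constant for the Cayley graph $\mathrm{Cay}(G,S)$. Let $P_d(G)$ be the simplicial complex with vertex set $G$ in which a finite subset $\sigma\subseteq G$ spans a simplex exactly when $\mathrm{diam}_{d_S}(\sigma)\le d$; this is locally finite (a vertex lies only in the simplices spanned by subsets of its $d$-ball) and of dimension at most $|B(1,d)|-1$. The group $G$ acts on $P_d(G)$ by left translation, which is simplicial because left translation is an isometry of $(G,d_S)$. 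I would first dispatch properness and cocompactness, which are immediate: the action is free on vertices, so every cell stabiliser is trivial and, $P_d(G)$ being locally finite, the action is proper; and any simplex can be translated to contain $1$, whereupon its remaining vertices lie in the finite ball $B(1,d)$, so there are finitely many orbits of simplices and the action is cocompact. The whole content is therefore to show that $P_d(G)$ is contractible once $d$ is sufficiently large in terms of $\delta$.

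For this, fix $x_0=1$ and, for each $g\in G$, a geodesic edge-path $[x_0,g]$ in $\mathrm{Cay}(G,S)$; let $\pi_k(g)$ denote the vertex of $[x_0,g]$ at distance $\min\{k,\,d_S(x_0,g)\}$ from $x_0$. Since $P_d(G)$ is a CW complex it suffices, by Whitehead's theorem, to prove it weakly contractible, and since any continuous image of a sphere lies in the full subcomplex $P_d(B(x_0,R))$ spanned by some ball $B(x_0,R)$, it is enough to show by induction on $R$ that $P_d(B(x_0,R))$ is contractible. The base $R=0$ is a point. For the inductive step I would show that $\pi_{R-1}$ induces a deformation retraction of $P_d(B(x_0,R))$ onto $P_d(B(x_0,R-1))$: one moves each vertex on the sphere of radius $R$ inward to $\pi_{R-1}(g)$ along its chosen geodesic, leaving the vertices of $B(x_0,R-1)$ fixed, and extends over each simplex by the straight-line / prism homotopy. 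Note that $\pi_{R-1}$ does not increase distance to $x_0$, so it maps into $B(x_0,R-1)$ as required.

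The hard part is to check that this prescription is well defined as a map into $P_d$, i.e.\ that for every simplex $\sigma$ of $P_d(B(x_0,R))$ the set $\sigma\cup\pi_{R-1}(\sigma)$, together with every subset of it occurring in the prism subdivision, again spans a simplex — equivalently, has $d_S$-diameter at most $d$. This is exactly where $\delta$-hyperbolicity enters: comparing the geodesic triangle on $\{x_0,u,v\}$ with its comparison tripod shows that when $d_S(u,v)\le d$ one has $d_S(\pi_k(u),\pi_k(v))\le d_S(u,v)+C\delta$ for a universal constant $C$, and when $u,v$ both lie on the sphere of radius $R$, pushing them inward even decreases their tripod distance. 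The crude triangle inequality leaves an additive $C\delta$ of slack, so the argument must be tuned — for instance by pushing inward in steps of size $\sim\delta$ rather than $1$, and by carrying out the triangle estimates carefully — so as to absorb this slack and keep the diameter genuinely $\le d$; this is the bookkeeping done in \cite[III.H.3.21]{bridson_metric_1999}, where $d\ge 4\delta$ is shown to suffice. Granting it, each $P_d(B(x_0,R))$ is contractible, hence $P_d(G)$ is contractible, and $K:=P_d(G)$ with its left-translation $G$-action is the required Rips complex.
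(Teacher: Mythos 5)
The paper does not prove this result; it is stated as a known theorem and the reader is referred to \cite[III.H.3.21]{bridson_metric_1999} for the details. Your sketch is the standard Bridson--Haefliger argument: define the Rips complex $P_d(G)$ on the vertex set $G$, observe that properness and cocompactness of the left-translation action are immediate from local finiteness and from the fact that every simplex can be translated into the $d$-ball about $1$, and then reduce contractibility to an inductive retraction of the full subcomplexes $P_d(B(x_0,R))$ towards the basepoint.

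The one place you should be careful is the one you yourself flag: a one-step retraction $\pi_{R-1}$ that moves sphere-of-radius-$R$ vertices inward by a single unit does \emph{not} obviously preserve the diameter bound, since thin-triangle comparison gives only $d_S(\pi_k(u),\pi_k(v))\le d_S(u,v)+C\delta$ and the additive $C\delta$ slack is not absorbed by moving a single edge. The argument in \cite[III.$\Gamma$.3.23]{bridson_metric_1999} therefore does not retract sphere by sphere: it picks out the vertices of a finite subcomplex at \emph{maximal} distance $n$ from the basepoint and replaces each such vertex $v$ by a point on a geodesic $[x_0,v]$ at distance roughly $n-\lfloor d/2\rfloor$, choosing $d\ge 4\delta+6$ so that the diameter of the image simplex genuinely drops below $d$; contractibility then follows because every sphere-image of $P_d(G)$ lies in a finite subcomplex. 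Your ``push inward in steps of size $\sim\delta$'' remark is aimed at exactly this, but as written the induction on $R$ with step $1$ is not a correct proof, only a correct plan; the fix is to jump inward by a definite fraction of $d$ at each stage rather than by one unit. With that adjustment (or simply by citing the BH argument as the paper does), the sketch is sound and takes essentially the same route as the source the paper cites.
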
 

Rips gave the first examples of subgroups of hyperbolic groups which are finitely generated $(F_1)$ but not finitely presented $(F_2)$. Using small cancellation theory, he obtained the following.  

\begin{theorem}[Rips,\cite{rips_subgroups_1982}, Theorem 1]
For every finitely presented group $G$ there exists a hyperbolic group $H$ which surjects onto $G$, such that the kernel $K$ is finitely generated. If $G$ is infinite, then $K$ is not finitely presented.
\end{theorem}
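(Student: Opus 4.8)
\begin{sketch}
The plan is to carry out Rips's small cancellation construction. Fix a finite presentation $G=\langle x_1,\dots,x_n\mid r_1,\dots,r_m\rangle$. I would build $H$ by adjoining two new generators $a_1,a_2$ together with the relators
$$x_i a_k x_i^{-1} U_{i,k},\qquad x_i^{-1} a_k x_i V_{i,k}\ \ (1\le i\le n,\ k=1,2),\qquad r_j Z_j\ \ (1\le j\le m),$$
where $U_{i,k}$, $V_{i,k}$ and $Z_j$ are words in $a_1,a_2$ only. The first task is purely combinatorial and routine: choose these $a$-words long and ``generic'' --- for instance as pairwise disjoint sufficiently long subwords of a fixed aperiodic word in $a_1,a_2$, or by a direct counting argument --- so that the resulting finite relator set satisfies the metric small cancellation condition $C'(1/6)$. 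Granting that, $H$ is hyperbolic, since a $C'(1/6)$ presentation satisfies Dehn's algorithm and hence has a linear Dehn function; one may also arrange that no relator is a proper power, so that $H$ is torsion free and $a_1,a_2$ have infinite order.

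Next I would produce the surjection. The assignment $x_i\mapsto x_i$, $a_k\mapsto 1$ kills every new relator --- for example $x_ia_kx_i^{-1}U_{i,k}\mapsto x_ix_i^{-1}=1$, since $U_{i,k}$, $V_{i,k}$ are words in the $a$'s, while $r_jZ_j\mapsto r_j=1$ in $G$ --- so it defines an epimorphism $\rho\colon H\twoheadrightarrow G$ whose kernel $K$ is the normal closure of $\{a_1,a_2\}$. The conjugation relators say precisely that $x_i^{\pm1}a_kx_i^{\mp1}$ already lies in the subgroup $\langle a_1,a_2\rangle$, so an induction on the length of a conjugating word shows $\langle a_1,a_2\rangle$ is normal in $H$; hence $K=\langle a_1,a_2\rangle$ is $2$-generated, in particular finitely generated.

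The substance of the theorem --- and the step I expect to be the main obstacle --- is showing that $K$ is not finitely presented when $G$ is infinite. Since $K$ is generated by $a_1,a_2$, this is equivalent to showing that the normal subgroup $R\le F(a_1,a_2)$ with $K=F(a_1,a_2)/R$ is not the normal closure of a finite set (equivalently, that the relation module of $K$ is not finitely generated over $\ZZ K$). The underlying mechanism is that $R$ ``remembers every element of $G$'': for a word $w$ in the $x_i$, fully rewriting the element $a_k=w^{-1}(wa_kw^{-1})w$ of $H$ by means of the conjugation relators yields a relation $a_k=c_{w,k}$ with $c_{w,k}$ a word in $a_1,a_2$ whose length grows with $|w|$ and which, by the $C'(1/6)$ hypothesis, admits no shortening. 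Together with the relations coming from the $r_jZ_j$, the words $a_kc_{w,k}^{-1}$ normally generate $R$; and since $G$ is infinite, its elements have unbounded word length, so these relators occur at unboundedly many ``levels''. To finish I would argue by contradiction from a hypothetical finite presentation $K=\langle a_1,a_2\mid s_1,\dots,s_p\rangle$ and analyse van Kampen diagrams over the $C'(1/6)$ presentation of $H$: following the $x_i$-bands dual to the boundedly many $x$-labelled edges of the defining relators, and invoking Greendlinger's lemma, one shows that deriving the relators at level $w$ genuinely requires relators reaching that level, so no finite set $s_1,\dots,s_p$ can suffice once the element of $G$ represented by $w$ is far enough from the identity. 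This last diagram analysis is the technical heart of the proof; everything preceding it is small cancellation bookkeeping.
\end{sketch}
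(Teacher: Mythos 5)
This theorem is only cited in the paper (as background, attributed to Rips), not proved, so there is no in-paper argument to compare against; I can only assess your sketch on its own terms. The construction itself is standard and correct: adjoining $a_1,a_2$ with conjugation relators $x_i^{\pm 1}a_k x_i^{\mp 1}U$ and $r_jZ_j$ for generic long $a$-words gives a $C'(1/6)$, hence hyperbolic, group $H$; the retraction $a_k\mapsto 1$ gives $H\twoheadrightarrow G$; and your induction showing $K=\langle a_1,a_2\rangle$ (so $K$ is $2$-generated) is exactly right. The genuine gap is in the only substantive part of the theorem, namely that $K$ is not finitely presented when $G$ is infinite, and you say as much yourself: the van Kampen diagram analysis you outline (tracking $x_i$-bands, Greendlinger's lemma, ``relators at level $w$ require relators reaching that level'') is a program, not a proof. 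Two concrete problems with it as stated: first, to show $K$ is not finitely presentable you must rule out \emph{every} finite set $s_1,\dots,s_p$ of potential relators, not just the ``natural'' ones $a_kc_{w,k}^{-1}$; your diagram argument is phrased around the natural relators, so you would need to pass to the relation module (which you correctly identify as the right object) and then the band/Greendlinger analysis does not obviously apply. Second, even restricting to natural relators, ``no shortening'' from $C'(1/6)$ controls areas and boundary lengths but does not by itself yield $\ZZ K$-module independence of the relators.

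The standard route here is homological rather than diagrammatic, and it sidesteps both problems. Since the $C'(1/6)$ presentation complex of $H$ is aspherical, $H$ has a finite $2$-dimensional $K(H,1)$, so $\mathrm{cd}(H)\le 2$. Bieri's theorem on normal subgroups in groups of cohomological dimension $2$ says that a normal subgroup of type $FP_2$ (in particular, finitely presented) and of infinite index in such a group must be free; combined with an argument that $K$ is not free (it is $2$-generated, and freeness is excluded by examining the conjugation action of $H$ on $K$, using that centralisers in torsion-free hyperbolic groups are cyclic), this gives the conclusion. Equivalently, one can compute $H_2(K;\ZZ)$ from the infinite $2$-complex $\tilde Y/K$ (where $\tilde Y$ is the contractible universal cover of the presentation complex) and show it is not finitely generated as an abelian group, which already obstructs $FP_2$. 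Either way, the ``hard'' step is genuinely hard, and the sketch as written leaves it open; if you want a diagram-free argument, Bieri's theorem is the tool to invoke, and you should cite it explicitly and supply the (short, but non-vacuous) argument that $K$ is not free.
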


Once this was known it was natural to ask whether finitely presented subgroups of hyperbolic groups are hyperbolic. A positive result in this direction was given by Gersten. 

\begin{thm}[Gersten, \cite{gersten_subgroups_1996}, Theorem 5.4]
If $G$ is a hyperbolic group of cohomological dimension 2 and $H < G$ is finitely presented, then $H$ is hyperbolic. 
\end{thm}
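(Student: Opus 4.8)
\begin{sketch}
The plan is to reduce hyperbolicity of $H$ to a linear isoperimetric inequality and to produce that inequality by transferring the linear filling of $G$ across the inclusion $H\hookrightarrow G$; the hypothesis $\operatorname{cd}G\le 2$ will be used, twice, to control the discrepancy between homotopical and homological fillings that would otherwise block such a transfer. Fix finite presentations $G=\langle X\mid R\rangle$ and $H=\langle Y\mid S\rangle$ with presentation $2$-complexes $K_G,K_H$, universal covers $\widetilde K_G,\widetilde K_H$ (the Cayley $2$-complexes), and a map $f\colon K_H\to K_G$ inducing $H\hookrightarrow G$ on $\pi_1$, lifting to $\widetilde f\colon\widetilde K_H\to\widetilde K_G$. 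Recall that a finitely presented group is hyperbolic if and only if its Dehn function is linear (equivalently, by the sub-quadratic gap theorem, sub-quadratic), so it suffices to bound $\delta_H$ linearly.

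For a finite presentation, let $FV(n)$ denote the $\ell^1$-homological Dehn function: the maximum, over cellular $1$-cycles $z$ in the Cayley $2$-complex with $\|z\|_1\le n$, of the least $\|c\|_1$ among cellular $2$-chains $c$ with $\partial c=z$. A van Kampen diagram yields such a chain, so $FV_G\preceq\delta_G$ in general; the substantive point is the reverse inequality in dimension $2$. Since $G$ is hyperbolic it is of type $FP_2$, which together with $\operatorname{cd}G\le 2$ gives a resolution of $\ZZ$ by finitely generated projective $\ZZ G$-modules of length $2$; comparing it with the truncated cellular chain complex $C_2(\widetilde K_G)\to C_1(\widetilde K_G)\to C_0(\widetilde K_G)\to\ZZ$ via the generalised Schanuel lemma shows that $\pi_2(K_G)=H_2(\widetilde K_G)=\ker\big(\partial_2\colon C_2\to C_1\big)$ is itself a finitely generated projective $\ZZ G$-module. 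Finite generation of $\pi_2$ is exactly what lets one upgrade a homological filling of a loop to an honest disc diagram at the cost of a bounded multiplicative constant: one realises the $2$-chain by a ``singular diagram'' and repairs the bounded number of defect classes using translates of a fixed finite set of spherical diagrams. Hence $\delta_G\simeq FV_G$, so $FV_G$ is linear; the same argument applied to $H$ (using $\operatorname{cd}H\le\operatorname{cd}G\le 2$ and that $H$ is of type $FP_2$) gives $\delta_H\simeq FV_H$, reducing us to a linear bound on $FV_H$.

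Given a cellular $1$-cycle $z$ in $\widetilde K_H$ with $\|z\|_1\le n$, its image $\widetilde f_\sharp z$ is a $1$-cycle in $\widetilde K_G$ with $\|\widetilde f_\sharp z\|_1\preceq n$, hence bounds a $2$-chain $c$ in $\widetilde K_G$ with $\|c\|_1\le FV_G(\|\widetilde f_\sharp z\|_1)\preceq n$. One must now pull $c$ back to a $2$-chain in $\widetilde K_H$ bounding $z$, with comparable $\ell^1$-norm, and here $\operatorname{cd}\le 2$ enters a second time: using that $H_2(\widetilde K_H)$ and $H_2(\widetilde K_G)$ are finitely generated projective and that $\widetilde f_\sharp$ is compatible with the corresponding finite resolutions, one builds a $\ZZ H$-linear ``partial transfer'' $\tau$ on $2$-chains, bounded in $\ell^1$-norm, with $\partial\tau=\partial$ modulo the finitely many generators of these $H_2$'s; applying $\tau$ to $c$ and correcting by those bounded-norm generators yields $c'$ with $\partial c'=z$ and $\|c'\|_1\preceq n$. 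Thus $FV_H$ is linear, so $\delta_H\simeq FV_H$ is linear and $H$ is hyperbolic.

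The crux is the passage between homological and homotopical filling — both in the ``$FV\simeq\delta$'' step and in the transfer step — since in general these functions are genuinely inequivalent, and a loop that fills cheaply by a $2$-chain in the ambient complex need not fill cheaply inside a subcomplex. Everything rests on $\operatorname{cd}\le 2$ forcing the second homotopy module of a finite presentation to be finitely generated projective, which is just strong enough to (i) convert homological fillings into disc diagrams up to a constant and (ii) split the map on $2$-chains induced by $H\hookrightarrow G$ up to finitely many bounded corrections. Verifying that the constants in (i) and (ii) are uniform, i.e.\ independent of the loop, is the delicate heart of the argument — and it is precisely the step with no analogue in cohomological dimension $\ge 3$, consistent with the higher-dimensional examples that are the subject of this paper.
\end{sketch}
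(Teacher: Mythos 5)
The paper does not prove this theorem; it is a background result cited from Gersten, so there is no proof in the text to compare against. I will therefore assess your sketch against what Gersten actually does.

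Your framing — reduce hyperbolicity of $H$ to a linear isoperimetric inequality, and use $\operatorname{cd}G\le 2$ to force $\pi_2$ of a presentation $2$-complex to be finitely generated projective — is correct in spirit, and the Schanuel argument for $\pi_2(K_G)$ being f.g.\ projective is right and is indeed where the dimension hypothesis enters. The first half (``$\delta\simeq FV$ when $\pi_2$ is finitely generated'') is also a known reduction. But the second half has a genuine gap. You need a $\ZZ H$-linear, \emph{$\ell^1$-bounded} ``partial transfer'' $\tau\colon C_2(\widetilde K_G)\to C_2(\widetilde K_H)$ that approximately retracts $\widetilde f_\sharp$. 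No such map exists in general: as a $\ZZ H$-module, $C_2(\widetilde K_G)$ is free of infinite rank (indexed by $G/H$-cosets of each cell) whereas $C_2(\widetilde K_H)$ is free of finite rank, and the filling chain $c$ you found is supported all over $\widetilde K_G$, not on the image of $\widetilde f$. Projectivity of $\pi_2$ gives no splitting of $\widetilde f_\sharp$, let alone an $\ell^1$-bounded one, and the proposed correction ``by finitely many generators of $H_2$'' does not yield a uniform norm bound on $\partial\tau(c)-z$. This is not a fixable omission but the wrong direction to push things: you are trying to transport $\ell^1$-chains \emph{against} the inclusion.

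Gersten's actual proof circumvents exactly this by dualising. He characterises hyperbolicity of an $F_2$ group by vanishing of its $\ell^\infty$-cohomology in degree $2$ (with $\ell^\infty$ coefficients), and then uses $\operatorname{cd}G\le 2$ to obtain a length-$2$ projective resolution of $\ZZ$ over $\ZZ G$ which restricts to a (not finitely generated) projective resolution over $\ZZ H$; comparing this with a finite free $\ZZ H$-resolution and working with \emph{bounded cochains}, the relevant vanishing passes from $G$ to $H$ because an $\ell^\infty$-cocycle restricts naturally to the subgroup. In short: your approach and Gersten's agree on where $\operatorname{cd}\le 2$ is used (a length-$2$ f.g.\ projective resolution), but he works with $\ell^\infty$-cochains so that restriction to $H$ is automatic, while you work with $\ell^1$-chains where the corresponding transfer map is the thing that fails.
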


However, we have already seen that there are examples of hyperbolic groups with subgroups that are finitely presented but not hyperbolic. In the examples from \cite{brady_branched_1999} and \cite{lodha_finiteness_2017}, as well as our Theorems \ref{bradygen} and \ref{lodhagen}, the groups have cohomological dimension 3. In the final section of this paper we show that the first of the two techniques is very special to dimension 3 and prove the following:

\begin{thmm}
Let $X$ be a product of more than $3$ graphs. Then no branched cover of $X$ is hyperbolic. 
\end{thmm}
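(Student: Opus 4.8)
The plan is to show that any branched cover $\hat X\to X$ contains a $\pi_1$-injective flat torus; since hyperbolic groups contain no $\ZZ^2$, this forces $\pi_1(\hat X)$ to be non-hyperbolic. Write $X=\Gamma_1\times\cdots\times\Gamma_n$ with $n>3$, let $B\subset X$ be the branch locus, and use two properties of the construction of Section~3 (cf.\ \cite{brady_branched_1999}): $\hat X$ is a compact nonpositively curved cube complex, and $\hat X\to X$ restricts to a genuine covering, hence a local isometry, over $X\setminus B$. Granting these, it suffices to produce a compact, locally convex flat $2$-torus $T\subset X$ with $T\cap B=\emptyset$. Indeed, the preimage of such a $T$ then lies in $X\setminus B$ and is a finite disjoint union of finite covers of $T$, i.e.\ of flat tori, each of which is locally convex in $\hat X$ (local convexity is a local property and the covering is a local isometry near the preimage of $T$); since a locally convex subspace of a nonpositively curved cube complex is $\pi_1$-injective, any one of these tori embeds $\ZZ^2$ into $\pi_1(\hat X)$. (Equivalently, its universal cover is an isometrically embedded flat plane in $\widetilde{\hat X}$, which violates the Flat Plane Theorem.)

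To build $T$, first note that each $\Gamma_i$ contains a cycle: this is part of the standing hypotheses on the graphs, and some such non-degeneracy is necessary, since for products of trees the statement is false. Fix a shortest cycle $c_i\subset\Gamma_i$ for each $i$; by minimality $c_i$ is isometrically embedded and locally convex in $\Gamma_i$, so $c_1\times c_2$ is a locally convex flat torus in $\Gamma_1\times\Gamma_2$. Since geodesics in a metric product are products of geodesics, the slice $\Gamma_1\times\Gamma_2\times\{p\}$ is totally geodesic in $X=(\Gamma_1\times\Gamma_2)\times Z$, where $Z:=\Gamma_3\times\cdots\times\Gamma_n$; hence for every point $p\in Z$ — vertex or not — the torus $T_p:=c_1\times c_2\times\{p\}$ is a compact, locally convex, isometrically embedded flat $2$-torus in $X$.

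The crux is to choose $p$ with $T_p\cap B=\emptyset$. If $T_p$ meets $B$, then $B$ contains a point of the form $(x_1,x_2,p)$ with $x_i\in c_i$, and this point lies in the subcomplex $B\cap(c_1\times c_2\times Z)$; applying the cellular coordinate projection $X\to Z$ shows that $p$ lies in the image of this subcomplex, a subcomplex of $Z$ of dimension at most $\dim B$. In the constructions at hand the branch locus is $1$-dimensional (a union of closed geodesics), so the ``bad'' set $\{p:T_p\cap B\neq\emptyset\}$ is contained in the $1$-skeleton of $Z$. On the other hand $\dim Z=n-2\ge 2$, because $n>3$ and each $\Gamma_i$, having a cycle, contains an edge; so $Z$ has a cube of dimension $\ge 2$, and any $p$ in its interior gives $T_p\cap B=\emptyset$. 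This completes the argument in that case.

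The main obstacle is precisely this last step, and it is here that $n>3$ is essential. If one wants to permit an arbitrary branch locus of codimension $2$, then $\dim B=n-2=\dim Z$, the projection of $B$ into a fixed pair of ``free'' factors can be all of $Z$, and the naive count fails. One must then exploit the additional room available for $n\ge 4$: choose the pair of ``cycle factors'' $\{i,j\}$ and the cycles $c_i,c_j$ so that no top-dimensional cube of $B$ meeting $c_i\times c_j\times(\text{the rest})$ is pinched exactly in coordinates $i$ and $j$; then the image of $B$ under the projection away from coordinates $i,j$ has dimension $<n-2$, and a generic point there yields the desired torus. Establishing that such a pair and such cycles always exist is a combinatorial statement about codimension-$2$ subcomplexes of products of at least four graphs, and this is the technical heart of the proof; the remaining ingredients — the imported properties of branched covers and the non-degeneracy of the factors — are routine.
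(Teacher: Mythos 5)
Your global framework is sound (a compact, locally convex flat $2$-torus disjoint from the branch locus would lift to a locally convex torus in the branched cover, hence a $\ZZ^2$), and it is a genuinely different strategy from the paper's. However, the heart of your argument is the existence of a coordinate $2$-torus $c_i\times c_j\times\{p\}$ avoiding the branch locus $B$, and this is exactly where your proposal does not close. For a branch locus of the maximal allowed dimension $n-2$, the ``bad set'' in $Z=\prod_{k\ne i,j}\Gamma_k$ can be all of $Z$: for instance, if a component of $B$ is of the form $\{v_i\}\times\{v_j\}\times\prod_{k\ne i,j}\Gamma_k$ with $v_i\in c_i$, $v_j\in c_j$, then \emph{every} choice of $p$ fails. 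One can indeed try to change the pinched pair $\{i,j\}$ or the cycles, and you say as much, but you acknowledge that you have not proved that this always works, and it is not at all obvious that it does when $B$ has components pinched in several different coordinate pairs. In short: the reduction you run works only when $\dim B<\dim Z$, i.e., when $B$ has dimension at most $n-3$, which is the easy case, and the content of the theorem is precisely the codimension-$2$ case.

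The paper avoids this difficulty entirely by going the other way: rather than dodging $B$, it finds the required torus \emph{inside} the branch locus. It first reduces to branched covers of $T^n$ (every vertex of each $\Gamma_i$ has valence $\geq 2$, so $X$ contains a cubical $T^n$). It then shows (Lemmas~\ref{justadd}, \ref{alln-2}, \ref{nofree}) that one may discard lower-dimensional cells and free faces from the branch locus without changing the branched cover, after which the branch locus is a disjoint union of cubical sub-tori $T^{n-2}$. Crucially, it then proves that the branched covering map is an \emph{unramified} covering over each such $T^{n-2}$, so its preimage in $\hat X$ is again a flat torus, and that this preimage is locally convex, so $\hat X$ is nonpositively curved and contains a locally convex $T^{n-2}$. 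Since $n>3$ gives $n-2\geq 2$, this $T^{n-2}$ supplies the $\ZZ^2$. This is cleaner because it sidesteps the combinatorics you left open: no choice of avoiding torus is needed, the obstruction (the branch locus) is itself the source of the flat. If you want to salvage your approach you would need to establish your claimed combinatorial statement about codimension-$2$ locally convex subcomplexes of products of $\ge 4$ graphs; as written that step is a conjecture, not a proof.
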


One would ideally like to extend the results of this paper to construct subgroups of hyperbolic groups that are of type $F_n$ but not $F_{n+1}$, or an example of a subgroup with a finite classifying space which is not hyperbolic. The techniques in this paper will not help us with these questions. One of the key difficulties in regard to these challenges is the paucity of techniques for constructing hyperbolic groups of high cohomological dimension.

\subsection*{Acknowledgements}

The author thanks Martin Bridson for helpful comments on this paper. The author was partially supported by an EPSRC studentship.
The author of the appendix was partially supported by the Clarendon Fund, Balliol College Marvin Bower Scholarship, James Fairfax Oxford Australia Scholarship, and the Israel Science Foundation (grant 662/15).

\section{Preliminaries}
\subsection{Cube Complexes}

A cube complex can be constructed by taking a collection of disjoint cubes and gluing them together by isometries of their faces.

There is a standard way in which cube complexes can be endowed with metrics, and there is a well-known criterion from \cite{chern_hyperbolic_1987} that characterises those cube complexes that are locally CAT(0).

The precise definition of a cube complex is given in \cite[Def. 7.32]{bridson_metric_1999} as follows.

\begin{definition}
A {\em cube complex} $X$ is a quotient of a disjoint union of Euclidean cubes $K = \bigsqcup_{c \in C}[0,1]^{n_c}$ by an equivalence relation $\sim$. 

The restrictions $\chi_c\colon [0,1]^{n_c}\to X$ of the natural projection $\chi\colon K\to X = K/\sim$ are required to satisfy:
\begin{itemize}
\item for every $c\in C$ the map $\chi_c$ is injective;
\item if $\chi_c([0,1]^{n_c})\cap \chi_{c'}([0,1]^{n_{c'}})\neq \emptyset$, then there is an isometry $h_{c,c'}$ from a face $T_c\subset [0,1]^{n_c}$ onto a face $T_{c'}\subset [0,1]^{n_c'}$ such that $\chi_c(x) = \chi_{c'}(x')$ if and only if $x' = h_{c,c'}(x)$.
\end{itemize}
\end{definition}

\begin{definition}
A metric space is {\em non-positively curved} if its metric is locally CAT(0).
\end{definition}

We will see in due course that non-positive curvature for cube complexes is a local condition. It is controlled at the links of vertices. 

\begin{definition}
	Let $x$ be a point in a cube complex $X$. We define the {\em link of $x$}, denoted $\lk(x, X)$ to be the $\epsilon$ neighbourhood of $x$ for $\epsilon$ sufficiently small. 
	
	We define the {link of a cube $c$}, $\lk(c, X)$ to be the link of any interior point in $c$. If $c$ is an $n$-cube, then $\lk(c, X)$ is a join $\Lambda\ast S^{n-1}$.
\end{definition}

We should take $\epsilon$ smaller than the distance from $x$ to any cube not containing $x$. 

We can subdivide the cube complex to make $x$ a vertex. This link then comes with natural simplicial structure, where $n$-cells in the link are intersections of $N_\epsilon(x)$ with $n+1$-cubes in this new cubical structure. 

This link is a spherical complex built from all-right spherical simplices (see \cite[\S5.18]{bridson_metric_1999}). 

\begin{definition}
A complex $L$ is a {\em flag complex} if it is simplicial and every set $\{v_1, \dots, v_n\}$ of pairwise adjacent vertices spans a simplex. (That is, there are no ``empty simplices'').
\end{definition}

Flag complexes are completely determined by their $1$-skeleta. 

Gromov proved the following combinatorial condition for cube complexes.

\begin{thm}[Gromov, \cite{chern_hyperbolic_1987}]
A cube complex is non-positively curved if and only if the link of every vertex is a flag complex. 
\end{thm}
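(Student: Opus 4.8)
\begin{sketch}
The plan is to factor the equivalence through two standard reductions --- the link condition for polyhedral complexes and Gromov's combinatorial lemma on all-right spherical complexes --- and then concentrate effort on the latter. A cube complex is a piecewise-Euclidean polyhedral complex with only finitely many isometry types of cells in each dimension, so by the link condition (\cite{bridson_metric_1999}, Chapter~II.5; in the unbounded-dimension case one reduces to finite subcomplexes, since both ``non-positively curved'' and ``flag'' are checkable there) it is non-positively curved (i.e.\ locally CAT(0)) if and only if the link of every vertex is CAT(1). At a point in the interior of an $n$-cube the link is a spherical join $\Lambda\ast S^{n-1}$ with $\Lambda$ a piecewise-spherical complex, and by the join theorem this is CAT(1) exactly when $\Lambda$ is; since $\Lambda$ occurs as the link of a simplex inside a vertex link, the vertex links carry all the curvature information. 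Every vertex link of a cube complex is a spherical complex all of whose simplices are \emph{all-right} (each edge has length $\pi/2$), so the theorem reduces to \emph{Gromov's Lemma}: an all-right spherical complex $L$ is CAT(1) if and only if it is a flag complex.

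For the direction ``not flag $\Rightarrow$ not CAT(1)'' I would induct on $\dim L$. Pick a minimal set $\{v_0,\dots,v_k\}$ of pairwise-adjacent vertices not spanning a simplex; then $k\ge 2$. If $k=2$, the three edges $v_0v_1$, $v_1v_2$, $v_2v_0$ form a closed path of length $\tfrac{3\pi}{2}<2\pi$ which is locally geodesic at each $v_i$: since the triangle $\{v_0,v_1,v_2\}$ is absent, the two incident directions are non-adjacent vertices of $\lk(v_i,L)$, hence at distance $\ge\pi$ there. A space containing a closed geodesic of length $<2\pi$ is not CAT(1), settling this (self-contained) base case. If $k\ge 3$, then $\{v_1,\dots,v_k\}$ is a minimal non-spanning set of pairwise-adjacent vertices in the all-right spherical complex $\lk(v_0,L)$, which has strictly smaller dimension; by induction $\lk(v_0,L)$ is not CAT(1), so by the link condition applied to $L$, neither is $L$.

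For ``flag $\Rightarrow$ CAT(1)'' I would again induct on $\dim L$, the case $\dim L=0$ being trivial. For $\dim L=n\ge 1$, the link condition for piecewise-spherical complexes states that $L$ is CAT(1) iff every vertex link is CAT(1) and $L$ has no closed geodesic of length $<2\pi$. The link of a vertex of a flag complex is again flag --- a set of pairwise-adjacent vertices of $\lk(v,L)$, together with $v$, is a set of pairwise-adjacent vertices of $L$ and so spans a simplex --- and has dimension $n-1$, hence is CAT(1) by induction. It then remains to exclude closed geodesics of length $<2\pi$: taking the midpoint of such a geodesic yields two local geodesics of length $<\pi$ with the same endpoints, distinct because at the basepoint the incoming and outgoing directions of a local geodesic lie at distance $\ge\pi$ apart, so it suffices to show that a flag all-right spherical complex admits no such pair. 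One argues this by tracking the cells a local geodesic crosses: at an interior point $p$ of a face $\sigma$, the splitting $\lk(p,L)=\lk(\sigma,L)\ast S^{\dim\sigma-1}$ together with the local-geodesic condition forces the path to cross $\sigma$ ``straight'', with its $\lk(\sigma,L)$-directions before and after at distance exactly $\pi$; the absence of empty simplices then prevents two such short geodesics from sharing endpoints and forces any one that closes up to have length at least $2\pi$.

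The two reductions and the ``not flag'' direction are essentially bookkeeping resting on cited theorems; the genuine obstacle is the last step, excluding short closed geodesics in flag all-right spherical complexes. The bound is sharp --- the boundary of the $3$-dimensional cross-polytope is a flag triangulation of the round sphere $S^2$ by all-right spherical triangles, and its great circles have length exactly $2\pi$ --- so the argument must invoke the flag condition at every crossing of the geodesic and leaves no room to spare.
\end{sketch}
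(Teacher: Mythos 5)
The paper does not prove this statement: it is quoted as background, attributed to Gromov and in practice taken from Bridson--Haefliger (the link condition for cubical complexes together with Gromov's Lemma on all-right spherical complexes, II.5.20 and II.5.18 of \cite{bridson_metric_1999}), so there is no internal proof to compare against. Your sketch follows exactly that standard route: reduce to the claim that an all-right spherical complex is CAT(1) if and only if it is flag, then induct on dimension in both directions.

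Judged as a proof, however, it has a genuine gap at precisely the step you yourself identify as the crux. In the direction ``flag $\Rightarrow$ CAT(1)'' everything is correctly reduced to showing that a flag all-right spherical complex contains no closed local geodesic of length $<2\pi$, but at that point the sketch only records an intention (``tracking the cells a local geodesic crosses \dots\ the absence of empty simplices then prevents two such short geodesics from sharing endpoints''), which is a restatement of what must be proved rather than an argument. The actual proof requires the quantitative geometry of all-right complexes: for instance, that the open ball of radius $\pi/2$ about a vertex $v$ is the open star of $v$, that a local geodesic entering this star crosses it in an arc of length exactly $\pi$ whose endpoints are antipodal over $v$, and then an induction exploiting flagness to show that a closed geodesic of length $<2\pi$ would force either a short closed geodesic in some vertex link or an empty simplex. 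None of that mechanism appears, and it is exactly the non-formal content of Gromov's Lemma, so as it stands the hard direction is unproved. Two smaller points: the base case of the ``not flag'' direction silently uses the (true but not obvious) lemma that non-adjacent vertices of an all-right spherical complex are at distance at least $\pi$, which should be proved or cited; and the parenthetical treatment of cube complexes of unbounded dimension by ``reducing to finite subcomplexes'' is too glib, since local CAT(0)-ness is not a condition on finite subcomplexes in any immediate sense --- though this is harmless for the paper, whose complexes are $3$-dimensional.
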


In the course of this paper we will construct subgroups of hyperbolic groups which are not hyperbolic. We therefore require a criterion which will tell us when the fundamental group of a compact cube complex is hyperbolic. This will be the case if the universal cover is a hyperbolic metric space. Bridson, following a suggestion of Gromov, gave a nice characterisation of this property as follows. 

\begin{thm}[Bridson, \cite{bridson_existence_1995}, Theorem A]\label{flatplane}
Let $X$ be a compact non-positively curved cube complex and let $\tilde{X}$ be its universal cover. Then $\tilde{X}$ is not hyperbolic if and only if there exists an isometric embedding $i\colon \EE^2\hookrightarrow \tilde{X}$. 
\end{thm}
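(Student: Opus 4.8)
I would prove the two implications separately. The direction ``$\tilde X$ contains an isometrically embedded copy of $\EE^2$ $\Rightarrow$ $\tilde X$ is not hyperbolic'' is elementary; all the work is in the converse, which is the Flat Plane Theorem in the polyhedral setting, and which I would prove by contraposition via a recentre-and-take-a-limit argument.

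\textbf{The easy direction.} Since $\tilde X$ is simply connected and locally $\mathrm{CAT}(0)$ it is $\mathrm{CAT}(0)$ by the Cartan--Hadamard theorem, hence uniquely geodesic; since $X$ is a compact cube complex, $\tilde X$ is locally finite and complete, hence proper. An isometric embedding $i\colon\EE^2\hookrightarrow\tilde X$ sends each Euclidean segment to a path realising the distance between its endpoints, hence to the unique $\tilde X$-geodesic joining them; thus $i(\EE^2)$ is a convex subspace isometric to $\EE^2$. Were $\tilde X$ $\delta$-hyperbolic, every geodesic triangle with vertices in $i(\EE^2)$ would be $\delta$-thin, and because a witnessing pair of points lies in the convex set $i(\EE^2)$, on which the ambient and Euclidean metrics agree, such triangles would be $\delta$-thin inside $\EE^2$ --- but $\EE^2$ contains arbitrarily fat triangles, a contradiction.

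\textbf{The hard direction.} Suppose $\tilde X$ is not hyperbolic. Failure of hyperbolicity of a geodesic space means the thin-triangles condition fails for every constant, so there are geodesic triangles $T_n=\Delta(a_n,b_n,c_n)$ and points $p_n$ on the side $[b_n,c_n]$ with $r_n:=d\bigl(p_n,\,[a_n,b_n]\cup[a_n,c_n]\bigr)\to\infty$; since $a_n,b_n,c_n$ all lie on that union, automatically $d(p_n,a_n),d(p_n,b_n),d(p_n,c_n)\ge r_n$. As $G=\pi_1(X)$ acts cocompactly, translate each $T_n$ by a group element so that every $p_n$ lies in a fixed compact set; as $\tilde X$ is proper, Arzel\`a--Ascoli lets us pass to a subsequence for which $p_n\to p$, the sides $[b_n,c_n]$ converge to a bi-infinite geodesic line $\ell$ through $p$, and the segments $[p_n,a_n]$ converge to a geodesic ray $\rho$ from $p$. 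The force of the hypothesis $r_n\to\infty$ is that it drives the relevant $\mathrm{CAT}(0)$ comparison maps to become distance-preserving in the limit: for fixed $s,t>0$, the geodesic quadrilaterals spanned for large $n$ by $\ell(-s),\ell(s)$ and two points far out along $[p_n,a_n]$ have Euclidean comparison figures converging to genuine rectangles, so the flat triangle and flat quadrilateral rigidity theorems for $\mathrm{CAT}(0)$ spaces force the limiting regions to be genuinely flat; taking the nested union over $s,t>0$ produces an isometrically embedded flat half-plane $\{(s,t):t\ge0\}\hookrightarrow\tilde X$ with boundary $\ell$. A second application of the same recentre-and-limit procedure --- translating points of this half-plane at distance $n$ from its boundary into a fixed compact set and passing to a limit, along which the boundary recedes to infinity --- thickens it on the remaining side and yields an isometrically embedded $\EE^2\hookrightarrow\tilde X$. (For a cube complex one can alternatively argue combinatorially, producing in the limit an honest grid of hyperplanes; the version above is the one that generalises to arbitrary polyhedral complexes with finitely many shapes.)

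\textbf{Main obstacle.} The serious point is not the $\mathrm{CAT}(0)$ manipulations but the extraction of the limits \emph{inside $\tilde X$ itself}: a priori, limits of translated configurations live only in an ultralimit of $\tilde X$, which need not embed back into $\tilde X$. This is exactly where compactness of the cube complex is essential: finiteness of $X$ bounds the local combinatorial complexity --- dimensions of cubes and isomorphism types of vertex links --- uniformly over $\tilde X$, so a convergent subsequence of the cellular data along the converging geodesics exists, and the limiting flat is realised as an honest isometrically embedded subcomplex of $\tilde X$ on which the rigidity theorems may be invoked. Making this uniform-local-structure statement precise, and carrying the ``centring'' bookkeeping for the fat triangles through both limiting steps, is the part I expect to require real care; Cartan--Hadamard, properness of locally finite $\mathrm{CAT}(0)$ complexes, and the flat triangle and flat quadrilateral theorems are then standard input.
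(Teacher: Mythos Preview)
The paper does not prove this theorem; it is quoted as an external result (Bridson, \emph{Theorem A} of \cite{bridson_existence_1995}) and used as a black box in the hyperbolicity arguments of Sections~3 and~4. There is therefore no proof in the paper to compare your proposal against.

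That said, your outline is the standard route to the Flat Plane Theorem in the polyhedral setting and is essentially the argument of Bridson's original paper (and of \cite[III.H, Theorem~1.5 and II.9.33]{bridson_metric_1999}). The easy direction is fine. For the hard direction, the broad architecture---fat triangles, recentre by the cocompact action, Arzel\`a--Ascoli, $\mathrm{CAT}(0)$ flat-triangle rigidity to get a half-plane, then a second limit to get a full plane---is correct. Two points would need tightening in a complete write-up: first, the passage from ``$r_n\to\infty$'' to ``the limiting configuration is flat'' is usually organised via the intermediate statement that a proper cocompact $\mathrm{CAT}(0)$ space failing hyperbolicity contains isometrically embedded flat disks of every radius (this is where the comparison inequalities are actually squeezed to equalities), and only then does one take a single limit of disks; your two-step half-plane-then-plane version works but the bookkeeping is heavier. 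Second, your identification of the ``main obstacle'' is exactly right: the substantive content of Bridson's theorem is that for $M_\kappa$-polyhedral complexes with finitely many isometry types of cells (in particular compact cube complexes), limits of isometrically embedded flat disks taken after recentring remain inside $\tilde X$ rather than escaping to an asymptotic cone. Making that precise is the real work, and your sketch correctly flags it without carrying it out.
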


We call such an isometrically embedded copy of $\EE^2$ a {\em flat}.

The key arguments in our proofs will be proving that the complexes constructed have no flat planes in their universal covers. To do this we will need an appropriate form of transvervality in the world of cube complexes; we define this as follows.

\begin{definition}
Given a CAT(0) cube complex $X$ and an isometric embedding $i\colon \EE^2\to X$, we say that a subset $D$ of $X$ intersects $\EE^2$ {\em transversally} at a point $p$ if there is an $\epsilon>0$ such that $N_{\epsilon}(p)\cap D\cap i(\EE^2) = \{p\}$.
\end{definition}

\subsubsection{CAT(0) cones}

We will consider flats in CAT(0) cube complexes. For every point $x\in X$ on the flat, there is a corresponding geodesic loop of length $2\pi$ in ${\rm Lk}(x, X)$. In many cases the link of such a point will be a spherical join, and it will be natural to consider the Euclidean cone on this link and examine flat planes in this cone. For full details see \cite[I.5.13]{bridson_metric_1999}

\begin{definition}\label{conedef}
Given a metric space $Y$, the {\em CAT(0) cone} $X = C_0(Y)$ over $Y$ is defined as follows. As a set $X$ is a quotient of $[0,\infty)\times Y$ by the equivalence relation given by $(t, y)\sim (t',y')$ if $(t=t'=0)$. The equivalence class of $(t,y)$ is denoted $ty$ and the class of $(0,y)$ is denoted $0$.

The distance between two points $x = ty$ and $x' = t'y'$ in $X$ is defined by 
$$d(x,x')^2 = t^2 + t'^2 - 2tt'\cos\big(\min\{\pi,d(y,y')\}\big).$$
This formula defines a metric on $X$ see \cite[I.5.9]{bridson_metric_1999}.
\end{definition}

\begin{rem}
The metric defined above is CAT(0) if and only if $Y$ is a CAT(1) space. 
\end{rem}

When considering cones, we have the following theorem telling us that joins of metric spaces correspond to products in the cone.

\begin{thm}[Bridson--Haefliger \cite{bridson_metric_1999}, I.5.15]\label{joinisproduct}
For any metric spaces $Y_1$ and $Y_2$ there is a natural isometry of $C_0(Y_1\ast Y_2)$ onto $C_0(Y_1)\times C_0(Y_2)$.
\end{thm}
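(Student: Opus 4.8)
The plan is to write down the natural map explicitly and verify that it is an isometry by a single distance computation, comparing the defining formula for the spherical join metric with the cone metric of Definition~\ref{conedef}.

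First I would fix coordinates. Represent a point of the spherical join $Y_1\ast Y_2$ by a triple $(\theta,y_1,y_2)$ with $\theta\in[0,\pi/2]$, subject to the identifications $(0,y_1,y_2)\sim(0,y_1,y_2')$ and $(\pi/2,y_1,y_2)\sim(\pi/2,y_1',y_2)$, so that for $p=(\theta,y_1,y_2)$ and $p'=(\theta',y_1',y_2')$ the join distance $d_J(p,p')\in[0,\pi]$ is determined by
$$\cos d_J(p,p') = \cos\theta\cos\theta'\cos\bar d_1 + \sin\theta\sin\theta'\cos\bar d_2,$$
where $\bar d_i=\min\{\pi,d_{Y_i}(y_i,y_i')\}$. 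A point of $C_0(Y_1\ast Y_2)$ is then $t\cdot p$ with $t\ge 0$. Define $\phi\colon C_0(Y_1\ast Y_2)\to C_0(Y_1)\times C_0(Y_2)$ by $\phi\big(t\cdot(\theta,y_1,y_2)\big)=\big((t\cos\theta)\cdot y_1,\ (t\sin\theta)\cdot y_2\big)$. I would first check that $\phi$ is well defined on the quotient: when $\theta=0$ the second coordinate is the cone point $0\in C_0(Y_2)$, independently of $y_2$, and symmetrically for $\theta=\pi/2$; when $t=0$ the image is $(0,0)$. Its inverse sends $(s_1y_1,s_2y_2)$ to $t\cdot(\theta,y_1,y_2)$ with $t=\sqrt{s_1^2+s_2^2}$ and $\theta=\arctan(s_2/s_1)$ (with the evident conventions when $s_1=0$ or $s_2=0$, where the corresponding $y_i$ is irrelevant, matching the join identifications), so $\phi$ is a bijection, and it is visibly natural in $Y_1$ and $Y_2$.

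Next comes the core computation. Join distances never exceed $\pi$, so the truncation $\min\{\pi,\cdot\}$ in Definition~\ref{conedef} is vacuous on the left-hand cone and
$$d_{C_0(Y_1\ast Y_2)}(tp,t'p')^2 = t^2 + t'^2 - 2tt'\big(\cos\theta\cos\theta'\cos\bar d_1 + \sin\theta\sin\theta'\cos\bar d_2\big).$$
Writing $t^2=t^2\cos^2\theta+t^2\sin^2\theta$ and likewise for $t'^2$, I would regroup the right-hand side as
$$\big(t^2\cos^2\theta + t'^2\cos^2\theta' - 2tt'\cos\theta\cos\theta'\cos\bar d_1\big) + \big(t^2\sin^2\theta + t'^2\sin^2\theta' - 2tt'\sin\theta\sin\theta'\cos\bar d_2\big).$$
By Definition~\ref{conedef} (with $s_1=t\cos\theta\ge0$, $s_1'=t'\cos\theta'\ge0$, and $\bar d_1$ already the truncated distance in $Y_1$), the first bracket is exactly $d_{C_0(Y_1)}\big((t\cos\theta)y_1,(t'\cos\theta')y_1'\big)^2$, and the second is $d_{C_0(Y_2)}\big((t\sin\theta)y_2,(t'\sin\theta')y_2'\big)^2$; their sum is the squared product metric on $C_0(Y_1)\times C_0(Y_2)$ evaluated at $\phi(tp)$ and $\phi(t'p')$. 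Hence $\phi$ preserves distances, and the displayed identity also gives naturality for free.

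I expect no genuine difficulty here, only two bookkeeping points to pin down. One is confirming that the spherical join metric is always $\le\pi$, i.e.\ that $\cos\theta\cos\theta'\cos\bar d_1+\sin\theta\sin\theta'\cos\bar d_2$ lies in $[-1,1]$: since $\theta,\theta'\in[0,\pi/2]$ all four factors $\cos\theta,\cos\theta',\sin\theta,\sin\theta'$ are nonnegative, so this quantity is at most $\cos\theta\cos\theta'+\sin\theta\sin\theta'=\cos(\theta-\theta')\le1$ and at least $-\cos(\theta-\theta')\ge-1$. The other is checking that the identifications defining $Y_1\ast Y_2$ and those defining the cone points of $C_0(Y_1)$ and $C_0(Y_2)$ are compatible under $\phi$, so the formula descends to the quotients; this is immediate from the degenerate cases $\theta\in\{0,\pi/2\}$ and $t=0$ handled above. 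With these in place the theorem follows.
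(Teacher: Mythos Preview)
Your proof is correct and is essentially the standard argument (the same one given in the cited reference, Bridson--Haefliger I.5.15). Note, however, that the paper itself does not prove this theorem at all---it simply quotes it as a black box from \cite{bridson_metric_1999}---so there is no in-paper proof to compare against.
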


In this setting, we consider the projection of $C_0(Y_1)\times C_0(Y_2)$ onto each factor. These projections do not increase distances and (after reparameterisation) map geodesics to geodesics. 

Two geodesic lines $c, c'\colon \RR\to X$ are said to be {\em asymptotic} if there exists a $k\in\RR$ such that $d\big(c(t), c'(t)\big)\leq k$ for all $t\in\RR$. Asymptotic rays in CAT(0) spaces behave very nicely:

\begin{thm}[Bridson--Haefliger \cite{bridson_metric_1999}, II.2.13]\label{flatstrip}
Let $X$ be a CAT(0) space and $c, c'$ be asymptotic geodesic lines. Then the convex hull of $c(\RR)\cup c'(\RR)$ is isometric to a flat strip $$\RR\times [0,D]\subset\EE^2.$$ 
\end{thm}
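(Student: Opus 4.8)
The plan is to reduce the Flat Strip Theorem to two inputs: a \emph{constancy} statement, that the distance between the two asymptotic lines is the same at every parameter, and a \emph{rigidity} statement, that a region filled by geodesic segments of constant length joining two geodesic lines is forced to be Euclidean. Throughout I use the standard convexity of the metric in a CAT(0) space and the basic properties of the nearest--point projection onto a complete convex subset (see \cite{bridson_metric_1999}, II.2).

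\emph{Step 1: constant width.} In a CAT(0) space, for any two geodesic lines $\gamma,\gamma'\colon\RR\to X$ parametrised by arc length the function $t\mapsto d(\gamma(t),\gamma'(t))$ is convex. Applied to $c$ and $c'$ this gives a convex function on $\RR$ which, since $c$ and $c'$ are asymptotic, is bounded; a bounded convex function on $\RR$ is constant, so $d(c(t),c'(t))=D$ for all $t$, for some $D\ge 0$. (Alternatively: $c'(\RR)$ is a complete convex subset of $X$, so $t\mapsto d(c(t),c'(\RR))$ is convex and bounded, hence constant equal to $D$; the nearest--point projection $p\colon X\to c'(\RR)$ is $1$-Lipschitz, and one checks using constancy that $p$ restricts to an isometry $c(\RR)\to c'(\RR)$, so after reparametrising $c'$ we may assume $p(c(t))=c'(t)$.) If $D=0$ the two lines coincide and the ``strip'' is the line $\RR\times\{0\}$, so from now on assume $D>0$.

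\emph{Step 2: the candidate isometry.} For each $t\in\RR$ let $\sigma_t\colon[0,D]\to X$ be the geodesic segment from $c(t)$ to $c'(t)$, parametrised by arc length, and set
$$ F\colon\RR\times[0,D]\longrightarrow X,\qquad F(t,r)=\sigma_t(r). $$
The vertical curves $r\mapsto F(t,r)$ are geodesics of length $D$ by construction, and the horizontal boundary curves $F(\cdot,0)=c$, $F(\cdot,D)=c'$ are geodesic lines. The key point is that every intermediate horizontal curve $t\mapsto F(t,r)$ is again a geodesic line: for $t_1<t_2$, convexity of the metric applied to the affinely parametrised segments $\sigma_{t_1},\sigma_{t_2}$ shows $r\mapsto d(\sigma_{t_1}(r),\sigma_{t_2}(r))$ is convex with both endpoint values equal to $|t_1-t_2|$, hence $d(F(t_1,r),F(t_2,r))\le|t_1-t_2|$; the reverse inequality follows because the nearest--point projection $q\colon X\to c(\RR)$ is $1$-Lipschitz and (by a short triangle--inequality argument using $d(c(s),c'(\RR))\equiv D$) sends $\sigma_s(r)$ to $c(s)$, so $|t_1-t_2|=d\big(q(\sigma_{t_1}(r)),q(\sigma_{t_2}(r))\big)\le d(\sigma_{t_1}(r),\sigma_{t_2}(r))$. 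Thus $d(F(t_1,r),F(t_2,r))=|t_1-t_2|$ for all $r$.

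\emph{Step 3: flatness and identification of the convex hull.} Having both coordinate families geodesic and of the predicted lengths, I invoke the rigidity case of the CAT(0) comparison inequality: a geodesic triangle (or quadrilateral) in a CAT(0) space for which the comparison inequality holds with equality bounds a region isometric to its Euclidean comparison model. Concretely, for $t_1<t_2$ compare the geodesic triangles $c(t_1)c(t_2)c'(t_2)$ and $c(t_1)c'(t_1)c'(t_2)$ with their Euclidean comparison triangles; because $d(\sigma_{t_1}(r),\sigma_{t_2}(r))$ equals the Euclidean comparison value for every $r$, the comparison maps are isometries, so the quadrilateral $c(t_1)c(t_2)c'(t_2)c'(t_1)$ bounds a flat Euclidean rectangle of side lengths $|t_1-t_2|$ and $D$ on which $F$ restricts to an isometry. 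Letting $t_1\to-\infty$, $t_2\to+\infty$ and checking that these flat rectangles agree on overlaps shows $F$ is an isometric embedding of $\RR\times[0,D]$. Finally, the image of $F$ is convex --- it is the union of the segments $[c(t),c'(t)]$ and, by the flatness just proved, is geodesically convex --- and contains $c(\RR)\cup c'(\RR)$; conversely any convex set containing both lines contains every segment $[c(t),c'(t)]$, hence contains the image of $F$. Therefore the image of $F$ is exactly the convex hull of $c(\RR)\cup c'(\RR)$, and it is isometric to $\RR\times[0,D]\subset\EE^2$.

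\emph{Main obstacle.} The substantive step is the passage in Step~3 from ``the coordinate curves are geodesics of the right length'' to ``the filled region is genuinely flat'': this is precisely the rigidity half of the CAT(0) condition --- equality in a comparison inequality forces a Euclidean subregion --- together with a bookkeeping argument gluing the resulting flat rectangles into a single strip. The constancy of the width in Step~1, the $1$-Lipschitz property of projections, and the convexity of $t\mapsto d(\gamma(t),\gamma'(t))$ used in Step~2 are all routine consequences of the standard CAT(0) toolkit.
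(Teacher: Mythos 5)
The paper itself does not prove this statement---it imports it verbatim from Bridson--Haefliger (II.2.13)---so the benchmark is the standard proof there. Your Steps 1 and 2 are correct and are essentially the standard opening: constancy of $t\mapsto d(c(t),c'(\RR))$ by convexity and boundedness, the normalisation $c'(t)=p(c(t))$, and the projection/convexity argument showing each horizontal curve $t\mapsto\sigma_t(r)$ is a unit-speed geodesic line are all fine.

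The genuine gap is in Step 3, the flatness step. The rigidity statement you invoke (equality in the CAT(0) comparison inequality forces a flat triangle) applies to a pair of points lying on the sides of a \emph{single} geodesic triangle, but the pairs whose distances you computed, $\sigma_{t_1}(r)$ and $\sigma_{t_2}(r)$, lie on sides of the two \emph{different} triangles $c(t_1)c(t_2)c'(t_2)$ and $c(t_1)c'(t_1)c'(t_2)$; moreover their ``Euclidean comparison value'' is not even defined until you know the diagonal $d\big(c(t_1),c'(t_2)\big)$, which is exactly what has not yet been determined (and there is no canonical comparison quadrilateral). Worse, the data you actually use in Step 3---verticals of length $D$, horizontals unit-speed, same-height distances equal to $|t_1-t_2|$---cannot by itself force a flat rectangle: take $X=\EE^2$, $c(t)=(t,0)$ and $c'(t)=(t+a,h)$ with $a\neq 0$, $a^2+h^2=D^2$. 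All three of those conditions hold, yet the quadrilateral $c(t_1)c(t_2)c'(t_2)c'(t_1)$ is a non-rectangular parallelogram and your map $F$ is a shear, not an isometry. What excludes this configuration is the projection normalisation from Steps 1--2 ($c'(t)$ is the point of $c'(\RR)$ nearest to $c(t)$, and $c(t)$ the point of $c(\RR)$ nearest to $\sigma_t(r)$, in particular to $c'(t)$), which you establish but never feed into Step 3. The standard repair, and the route Bridson--Haefliger actually take, is to use exactly this: by the projection property (BH II.2.4(3)) the Alexandrov angles of the quadrilateral at all four vertices are at least $\pi/2$, so the Flat Quadrilateral Theorem (BH II.2.11) shows each quadrilateral spans an isometrically embedded Euclidean rectangle; alternatively, first prove $d\big(c(s),c'(t)\big)^2=(s-t)^2+D^2$ from the right angle at the projection foot and then apply the equality case of the triangle comparison (Flat Triangle Lemma, BH II.2.9) to the two triangles cut off by the diagonal. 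With that ingredient supplied, your gluing of the rectangles into a strip and the identification of the image with the convex hull of $c(\RR)\cup c'(\RR)$ go through as you describe.
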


\subsection{Branched covers of cube complexes}

We will take branched covers of cube complexes to get rid of high dimensional flats. The techniques we will use were developed by Brady in \cite{brady_branched_1999}. The idea is to branch over an appropriate subset which intersects all the high-dimensional flats.

\begin{definition}\label{locus}
Let $K$ be a non-positively curved cube complex. We say that $L\subset K$ is a {\em branching locus} if it satisfies the following conditions:
\begin{enumerate}
\item $L$ is a locally convex cubical subcomplex, 
\item ${\rm Lk}(c, K)\smallsetminus L$ is connected and non-empty for all cubes $c$ in $L$.
\end{enumerate}
\end{definition}

The first condition is required to prove that non-positive curvature is preserved when taking branched covers. The second is a reformulation of the classical requirement that the branching locus has codimension 2 in the theory of branched covers of manifolds; it ensures that the trivial branched covering of $K$ is $K$. 

\begin{definition}
A {\em branched cover} $\widehat{K}$ of $K$ over the branching locus $L$ is the result of the following process.
\begin{enumerate}
\item Take a finite covering $\overline{K\smallsetminus L}$ of $K\smallsetminus L$.
\item Lift the piecewise Euclidean metric locally and consider the induced path metric on $\overline{K\smallsetminus L}$.
\item Take the metric completion $\widehat{K}$ of $\overline{K\smallsetminus L}$.
\end{enumerate}
\end{definition}

We require some key results from \cite{brady_branched_1999} which allow us to conclude that this process is natural and that the resulting complex is still a non-positively curved cube complex. 

\begin{lem}[Brady \cite{brady_branched_1999}, Lemma 5.3]
There is a natural surjection $\widehat{K}\to K$ and $\widehat{K}$ is a piecewise Euclidean cube complex. 
\end{lem}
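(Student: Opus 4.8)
The plan is to construct the surjection $\widehat{K}\to K$ by composing the covering map with the completion, and then to exhibit an explicit cube structure on $\widehat{K}$ by lifting the cubes of $K$. First I would observe that the covering projection $p\colon \overline{K\smallsetminus L}\to K\smallsetminus L$ is an open map, and that the path metric on $\overline{K\smallsetminus L}$ obtained by locally lifting the piecewise Euclidean metric of $K$ makes $p$ a local isometry. Since $K\smallsetminus L\hookrightarrow K$ is the inclusion of an open dense subset (density uses that $L$ is a subcomplex, hence nowhere dense once condition (2) of the branching locus definition guarantees $K$ is not locally contained in $L$), the composite $\overline{K\smallsetminus L}\to K$ is uniformly continuous on bounded sets and therefore extends to a map $\widehat{K}\to K$ on the metric completion. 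Surjectivity follows because the image is closed (the completion is complete, its image under the extension of a proper-enough map is closed) and dense, so it is all of $K$.

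Next I would build the cube structure. The idea is that each open cube $c^\circ$ of $K$ not contained in $L$ has preimage in $\overline{K\smallsetminus L}$ a disjoint union of isometric copies of $c^\circ$ (since $p$ is a covering and $c^\circ$ is simply connected, being a convex subset of Euclidean space); I would take the closures of these copies in $\widehat{K}$ as the cubes of $\widehat{K}$. For a cube $c$ of $K$ that meets $L$, one writes $c$ as a cube whose closure is glued from its own faces; the faces of $c$ lying outside $L$ lift as above, and the completion fills in the missing faces, so the closure of each component of $p^{-1}(c^\circ)$ in $\widehat{K}$ is again a Euclidean cube of the same dimension. Then I would check the two axioms in the definition of a cube complex: injectivity of each characteristic map $\chi_{\hat c}$ (this is local injectivity of the completion map restricted to a single lifted cube, which holds because $p$ is injective on each sheet and the metric completion does not identify points that are at positive distance), and the face-compatibility condition (two lifted cubes either are disjoint or meet along a common face, inherited from the corresponding statement in $K$ together with the fact that the covering respects the cell structure of $K\smallsetminus L$). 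The piecewise Euclidean metric on $\widehat{K}$ is then the one determined by declaring each $\hat c$ to be a unit Euclidean cube, and one verifies this agrees with the completed path metric by a standard local comparison: near an interior point of $\hat c$ the completion map is an isometry onto a neighbourhood in $K$, and near a point of the branching locus the two metrics agree because both are the path metric built from the same Euclidean pieces.

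The main obstacle I expect is controlling the local structure of $\widehat{K}$ at the branching locus, i.e.\ verifying that the completion really does glue in only finitely many cubes in a way that yields a genuine cube complex rather than some wilder metric space. Concretely, one must show that a neighbourhood of a point of $\widehat{K}$ lying over a cube $c\subset L$ is a finite union of sectors, each isometric to a piece of the universal cover of the corresponding deleted neighbourhood in $K$; the finiteness here comes from condition (2) in Definition~\ref{locus} (which makes $\lk(c,K)\smallsetminus L$ connected, so the relevant covering of a punctured neighbourhood is finite-sheeted) together with the assumption that the covering in step~(1) is finite. Once this local picture is pinned down, the two cube-complex axioms and the identification of the metric are routine local checks, and one concludes that $\widehat{K}$ is a piecewise Euclidean cube complex with the asserted natural surjection onto $K$. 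Since this is precisely Lemma~5.3 of \cite{brady_branched_1999}, in the write-up I would cite that reference for the details and only indicate the shape of the argument as above.
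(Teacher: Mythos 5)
The paper itself gives no proof of this statement: it is quoted directly from Brady \cite{brady_branched_1999} (Lemma 5.3), so there is no internal argument to compare against, and your sketch follows essentially the route of Brady's proof that the citation points to (extend the locally isometric covering map over the metric completion, then identify $\widehat{K}$ cube-by-cube, with the only real work being the local structure of the completion over the branching locus, controlled by condition (2) of the definition and finiteness of the cover). In that sense the outline is sound and is the expected one.

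One step as written does have a genuine gap: surjectivity of $\widehat{K}\to K$ is not obtained by saying the image is ``closed because the map is proper-enough''. The continuous extension of a $1$-Lipschitz map to a completion need not have closed image, and you cannot invoke compactness or properness of $\widehat{K}$ at that point, since those are consequences of the very cube-complex structure the lemma is asserting. The standard fix is direct: given $x\in L$, choose a straight segment inside a single cube ending at $x$ whose interior misses $L$, lift the segment minus its endpoint to $\overline{K\smallsetminus L}$; the lift has finite length, hence is Cauchy for the induced path metric, so it converges in $\widehat{K}$ to a point mapping to $x$. (Alternatively one can establish the local cone picture over the branching locus first and read off surjectivity from it.) With that repair, and with the local analysis you correctly flag as the main obstacle actually carried out -- each added point has a neighbourhood which is a cone on a finite-sheeted cover of the punctured link $\lk(c,K)\smallsetminus L$ -- the argument is the one Brady gives, and citing \cite{brady_branched_1999} for the details, as you propose, is exactly what the paper does.
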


\begin{lem}[Brady \cite{brady_branched_1999}, Lemma 5.5]\label{npcbranch}
If $L$ is a finite graph, then $\widehat{K}$ is non-positively curved. 
\end{lem}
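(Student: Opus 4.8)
The plan is to check Gromov's combinatorial criterion. By the preceding lemma of Brady, $\widehat K$ is a piecewise-Euclidean cube complex, so it is non-positively curved once we know that the link of every vertex of $\widehat K$ is a flag complex. Write $\pi\colon\widehat K\to K$ for the natural surjection: it is cellular, it restricts to an honest covering map and local isometry over $K\smallsetminus L$, and all branching occurs over $L$. Since $L$ is a graph, every vertex of $\widehat K$ maps either to a vertex of $K$ not lying in $L$, or to a vertex of the graph $L$, and we treat these two cases in turn.

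For the first kind, let $\widehat w$ be a vertex of $\widehat K$ with $v:=\pi(\widehat w)\notin L$. Because $L$ is a subcomplex, no edge of $L$ has $v$ as an endpoint, so $L$ misses a small ball about $v$; hence $\pi$ is unbranched near $\widehat w$ and $\mathrm{Lk}(\widehat w,\widehat K)\cong\mathrm{Lk}(v,K)$, which is a flag complex because $K$ is non-positively curved. All the content is in the second kind.

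So let $v$ be a vertex of $L$, put $S=\mathrm{Lk}(v,K)$ --- a flag all-right spherical complex, since $K$ is non-positively curved --- and $P=\mathrm{Lk}(v,L)$; because $L$ is one-dimensional, $P$ is a finite set of \emph{vertices} of $S$, and by condition~(2) of Definition~\ref{locus} the complement $S\smallsetminus P$ is connected and non-empty. A small punctured ball about $v$ in $K\smallsetminus L$ deformation retracts onto $S\smallsetminus P$, so the chosen covering of $K\smallsetminus L$, restricted over this ball, is modelled on a finite covering $W\to S\smallsetminus P$; taking the path metric and then the metric completion re-attaches, for each connected component $W_0$ of $W$, a single vertex $\widehat v_0$ lying over $v$. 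The step I expect to be the main obstacle is the local identification: one must show that $\mathrm{Lk}(\widehat v_0,\widehat K)$ is the \emph{completed cover} $\widehat S$ of $S$ relative to $P$ --- the space obtained from the covering $W_0$ of $S\smallsetminus P$ by adjoining, over each $p\in P$, finitely many new vertices, the link in $\widehat S$ of each such vertex being in turn a completed cover of $\mathrm{Lk}(p,S)$ relative to a finite set of its vertices (the recursion is well-founded since $\dim\mathrm{Lk}(p,S)<\dim S$). It is here that one-dimensionality of $L$ is essential: since $P$ is a set of vertices, $\mathrm{Lk}(p,P)=\emptyset$, so one level down the branching set is again a finite set of vertices and the recursion stays within spherical complexes; with higher-dimensional $L$ one would have to branch over positive-dimensional subcomplexes of the link, and flagness could genuinely fail.

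Granting the identification, it remains to prove the following combinatorial fact: any completed cover of a flag all-right spherical complex relative to a finite set of its vertices is again flag. I would prove this by induction on dimension, via a two-case argument on a set $u_0,\dots,u_k$ of pairwise-adjacent vertices of such a completed cover $\widehat S$, with base covering $q\colon W_0\to S\smallsetminus P$. If none of the $u_i$ is a new vertex, they all lie over vertices of $S$ outside $P$, so their images $q(u_i)$ are pairwise adjacent in the flag complex $S$ and span a simplex $\sigma$; since no vertex of $\sigma$ lies in $P$, the simplex $\sigma$ is disjoint from $P$, and lifting it through $u_0$ (unique path-lifting, $\sigma$ being simply connected) yields a simplex of $W_0\subset\widehat S$ with vertex set $\{u_0,\dots,u_k\}$. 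If some $u_i$, say $u_0$, is a new vertex over a point $p\in P$, then $u_1,\dots,u_k$ are pairwise-adjacent vertices of $\mathrm{Lk}_{\widehat S}(u_0)$, which by construction is a completed cover of the flag complex $\mathrm{Lk}(p,S)$ relative to a finite set of vertices, hence flag by the inductive hypothesis; so $u_1,\dots,u_k$ span a simplex $\tau$ there, and $u_0\ast\tau$ is the desired simplex of $\widehat S$. The base case, $\dim S=0$, is immediate. Combining this with the first case, every vertex link of $\widehat K$ is flag, so $\widehat K$ is non-positively curved by Gromov's criterion; finiteness of $L$ is used only to ensure $\widehat K$ is a locally finite cube complex, so that the criterion applies.
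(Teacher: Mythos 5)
Note first that the paper does not prove this lemma at all: it is quoted verbatim from Brady, so the only comparison available is with Brady's argument, whose overall shape (links away from $L$ are unchanged; links of vertices over $v\in L$ are completions of the induced covers of $\lk(v,K)\smallsetminus\lk(v,L)$; check the flag condition there) your outline does follow. However, your flagness induction has a genuine gap at exactly the point that carries the content. In the case where $u_0$ is a new vertex over $p\in P$, you assert that $u_1,\dots,u_k$ are pairwise adjacent \emph{as vertices of} $\lk_{\widehat S}(u_0)$ and then invoke flagness of that link. What you actually know is only that the edges $u_iu_j$ exist in $\widehat S$; for $u_i$ and $u_j$ to be adjacent \emph{in the link of} $u_0$ you need the $2$-simplex $u_0u_iu_j$ to exist, i.e.\ you need $\lk_{\widehat S}(u_0)$ to be a full subcomplex of $\widehat S$ -- which is precisely (the triangle case of) the flagness you are in the middle of proving. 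The step is circular. The repair is to run, in this case too, the lifting argument you used when no $u_i$ is new: the images $p,q(u_1),\dots,q(u_k)$ are pairwise adjacent in the flag complex $S$, hence span a simplex $\sigma$ with $\sigma\cap P=\{p\}$; the punctured simplex $\sigma\smallsetminus\{p\}$ is convex, hence simply connected, so it lifts to $W_0$ through $u_1$; uniqueness of lifts of the edges $q(u_1)q(u_i)$ at $u_1$ forces the lift to contain $u_2,\dots,u_k$; and the metric completion attaches the missing corner at the new vertex determined by the end of the lifted half-open edge over $pq(u_1)$, which is $u_0$ because $u_0u_1$ is the unique edge of $\widehat S$ over $pq(u_1)$ containing $u_1$. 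Note that here condition (1) of Definition \ref{locus} (local convexity, i.e.\ Brady's Lemma 5.4 on fullness of $\lk(v,L)$ in $\lk(v,K)$) is needed and never appears in your write-up: it is what rules out edges of $S$ between two points of $P$, hence edges of $\widehat S$ between two new vertices, so that $\sigma$ meets $P$ only in $p$.

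Two further points. The identification of $\lk(\widehat v_0,\widehat K)$ with the completed cover $\widehat S$ is only ``granted'' in your proposal; since the paper itself defers the whole lemma to Brady, and in Brady's treatment this local computation is the substance of the proof, it cannot simply be assumed. Also, your recursive description of $\widehat S$ is off: because $P$ is a discrete set of vertices, a small punctured neighbourhood of $p$ in $S\smallsetminus P$ contains no further points of $P$, so the link of a new vertex is an honest finite cover of $\lk(p,S)$ (flag by the same simplex-lifting argument), not a further completed cover with nonempty branch set; the induction on dimension is not where the work lies, and the one-dimensionality of $L$ is used rather to make $P$ a discrete, full set of vertices.
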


\subsection{Bestvina--Brady Morse theory}

While Bestvina--Brady Morse theory is defined in the more general setting of affine cell complexes, in this instance we shall only need it for non-positively curved cube complexes. 

For the remainder of this section, let $X$ be a CAT(0) cube complex and let $G$ be a group which acts freely, cellularly, properly and cocompactly on $X$. Let $\phi\colon G\to\ZZ$ be a homomorphism and let $\ZZ$ act on $\RR$ by translations.

Recall that $\chi_c$ is the characteristic map of the cube $c$.

\begin{definition}
We say that a function $f\colon X\to \RR$ is a {\em $\phi$-equivariant Morse function} if it satisfies the following 3 conditions.
\begin{itemize}
\item For every cube $c\subset X$ of dimension $n$, the map $f\chi_c\colon [0,1]^n\to\RR$ extends to an affine map $\RR^n\to\RR$ and $f\chi_c\colon [0,1]^n\to\RR$ is constant if and only if $n=0$.
\item The image of the $0$-skeleton of $X$ is discrete in $\RR$.
\item $f$ is $\phi$-equivariant, that is, $f(g\cdot x) = \phi(g)\cdot f(x)$.
\end{itemize}
\end{definition}

We will consider the level sets of our function, which we will denote as follows. 

\begin{definition}
For a non-empty closed subset $I\subset\RR$ we denote by $X_I$ the preimage of $I$. We also use $X_t$ to denote the preimage of $t\in\RR$.
\end{definition}

The kernel $H$ of $\phi$ acts on the cube complex $X$ in a manner preserving each level set $X_{I}$. Moreover, it acts properly and cocompactly on the level sets. We will use the topological properties of the level sets to gain information about the finiteness properties of the group. We will need to examine how they vary as we pass to larger level sets.

\begin{thm} [Bestvina--Brady, \cite{bestvina_morse_1997}, Lemma 2.3]
If $I\subset I'\subset\RR$ are connected and $X_{I'}\smallsetminus X_{I}$ contains no vertices of $X$, then the inclusion $X_I\hookrightarrow X_{I'}$ is a homotopy equivalence. 
\end{thm}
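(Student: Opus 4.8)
The plan is to prove the stronger statement that $X_I$ is a deformation retract of $X_{I'}$, building the retraction cube by cube out of the affine structure of $f$ on the cubes of $X$. This is the piecewise-Euclidean analogue of the classical Morse-theoretic fact that, in the absence of critical values between two levels, the lower sublevel set is a deformation retract of the higher one; the hypothesis that $X_{I'}\smallsetminus X_I$ contains no vertex of $X$ is precisely what plays the role of ``no critical values in between''.

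I would begin with routine reductions. Since homotopy equivalences compose it is enough to enlarge $I$ to $I'$ one end at a time, and replacing $f$ by $-f$ interchanges the two ends, so we may assume $I=[a,b]\subset[a,c]=I'$ with $b<c$. The cases in which $I$ or $I'$ is unbounded then follow from the bounded case by a direct-limit (telescope) argument, using that $X$, being a free cocompact $G$-complex, covers a finite complex and is therefore locally finite, so that the relevant subcomplex inclusions are cofibrations. Because $f$ sends the $0$-skeleton of $X$ to a discrete closed subset of $\RR$, the hypothesis now says exactly that no vertex of $X$ has $f$-value in the half-open interval $(b,c]$. Finally, by the gluing lemma for the closed cover $X_{[a,c]}=X_{[a,b]}\cup X_{[b,c]}$, whose two pieces meet in $X_b=f^{-1}(b)$, it suffices to produce a deformation retraction of the slab $X_{[b,c]}$ onto $X_b$ fixing $X_b$ pointwise: extending such a retraction by the identity on $X_{[a,b]}$ then gives what we want.

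The core of the argument is the local geometry of the cubes meeting the slab. On any cube $\sigma$ the map $f\chi_\sigma$ is affine, so $f$ attains its minimum $m_\sigma$ and maximum $M_\sigma$ on $\sigma$ at vertices of $\sigma$. If $\sigma$ meets $X_{(b,c]}$ then $M_\sigma>b$, and, being a vertex value, $M_\sigma\notin(b,c]$, so $M_\sigma>c$; and if $m_\sigma>b$ then $m_\sigma$ would be a vertex value exceeding $b$, hence exceeding $c$, forcing $\sigma\cap X_{[a,c]}=\emptyset$, a contradiction — so $m_\sigma\le b$. Consequently every cube of $X$ either is disjoint from $X_{(b,c]}$ (contributing nothing beyond a face of $X_b$), or \emph{crosses the slab}, in which case $m_\sigma\le b<c<M_\sigma$ and $\sigma\cap f^{-1}([b,c])$ is a convex polytope having the nonempty convex polytope $\sigma\cap f^{-1}(b)$ as a face. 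Such a polytope deformation retracts onto that face, and the remaining task is to assemble these cubewise retractions into one deformation retraction of $X_{[b,c]}$ onto $X_b$.

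This assembly is the main obstacle: the cubewise retractions must agree on shared faces and patch together continuously, and naive recipes — straight-line homotopy to the Euclidean nearest-point projection of $\sigma\cap f^{-1}([b,c])$ onto $\sigma\cap f^{-1}(b)$, or ``descending along $f$'' inside each cube — are not in general compatible with passage to a face, as one already sees on a single square with $f=x_1-x_2$. The way I would resolve it is to pass to the polytopal subdivision $X'$ of $X$ obtained by cutting every cube along the affine hyperplanes $\{f=b\}$ and $\{f=c\}$; in $X'$ the subsets $X_{[a,b]}$, $X_b$, $X_{[b,c]}$, $X_{[a,c]}$ are all subcomplexes, and, using the slab analysis above, one checks that $X_{[b,c]}$ collapses onto $X_b$ — the new cells lying in $X_{(b,c]}$ can be matched, free face by free face, with cells carrying them down to $X_b$, and these matchings are compatible across cubes because $f$ is defined globally. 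A collapse is in particular a deformation retraction fixing the subcomplex it collapses onto, which is exactly what we needed; feeding this back through the reductions of the second paragraph yields that $X_I\hookrightarrow X_{I'}$ is a homotopy equivalence.
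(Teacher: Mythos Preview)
The paper does not supply a proof of this statement; it is quoted as Lemma~2.3 of Bestvina--Brady and used as a black box. There is therefore no proof in the present paper against which to compare your argument.

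That said, your argument is sound. The reduction to showing that the slab $X_{[b,c]}$ deformation retracts onto $X_b$, the observation that every cube meeting $X_{(b,c]}$ must have minimum vertex value at most $b$ and maximum vertex value exceeding $c$ (since vertex values avoid $(b,c]$), and your caution that naive cubewise retractions fail to glue across faces are all correct and to the point. The polytopal subdivision and collapse you sketch does work: pairing each slab cell $\sigma\cap f^{-1}([b,c])$ with its top face $\sigma\cap f^{-1}(c)$ gives a matching under which one may collapse by decreasing dimension, because $\sigma\cap f^{-1}(c)$ meets the interior of $\sigma$ (no positive-dimensional face of $\sigma$ is contained in $f^{-1}(c)$, as that would force a vertex at level $c$) and is therefore a face only of cells coming from cubes containing $\sigma$. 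For comparison, Bestvina and Brady's own proof proceeds by building the deformation retraction inductively over skeleta, extending over each cell using convexity; your collapse argument is a legitimate discrete reformulation of the same geometry and has the merit of making the compatibility-across-faces issue explicit rather than hiding it inside an extension step.
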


If $X_{I'}\smallsetminus X_{I}$ contains vertices of $X$, then the topological properties of $X_{I'}$ can be very different from those of $X_I$. This difference is encoded in the ascending and descending links. 

\begin{definition}
The {\em ascending link} of a vertex is 

$Lk_{\uparrow}(v,X) = \bigcup \{{\rm Lk}(w,c)\mid c$ is a cube in $X$ and $\chi_c(w) = v$ and $w$ is a vertex of $c$ and is a minimum of $f\chi_c\}\subset {\rm Lk}(v, X).$

The {\em descending link} of a vertex is 

${\rm Lk}_{\downarrow}(v,X) = \bigcup \{{\rm Lk}(w,c)\mid c$ is a cube in $X$ and $\chi_c(w) = v$ and $w$ is a vertex of $c$ and is a maximum of $f\chi_c\}\subset {\rm Lk}(v, X).$
\end{definition}

\begin{thm} [Bestvina--Brady, \cite{bestvina_morse_1997}, Lemma 2.5]\label{homoeq}
Let $f$ be a Morse function. Suppose that $I\subset I'\subset\RR$ are connected and closed with $\min I = \min I'$ (resp. $\max I = \max I')$, and assume $I'\smallsetminus I$ contains only one point $r$ of $f\big(X^{(0)}\big)$. Then $X_{I'}$ is homotopy equivalent to the space obtained from $X_I$ by coning off the descending (resp. ascending) links of $v$ for each $v\in f^{-1}(r)$. 
\end{thm}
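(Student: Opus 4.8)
The plan is to prove the case $\min I=\min I'$, where one cones off descending links; the case $\max I=\max I'$ follows by applying it to $-f$. First I would reduce to a small interval straddling only the critical value $r$. Write $I=[a,s]$, so $s<r$; since $f(X^{(0)})$ is discrete, choose $\epsilon>0$ with $(r-\epsilon,r+\epsilon)\cap f(X^{(0)})=\{r\}$ and $s\le r-\epsilon$. The preceding theorem (no vertices of $X$ strictly between two levels $\Rightarrow$ the inclusion of level sets is a homotopy equivalence) then gives $X_I=X_{[a,s]}\simeq X_{[a,r-\epsilon]}$ and $X_{[a,r]}\simeq X_{I'}$; in the latter, if $I'$ reaches past $r$ one uses $X_{[a,r]}\hookrightarrow X_{[a,r+\epsilon]}\hookrightarrow X_{I'}$, both inclusions being homotopy equivalences since $f^{-1}((r,r+\epsilon])$ has no vertices, and if $I'=[a,r]$ there is nothing to do. So it suffices to show $A:=X_{[a,r]}$ is homotopy equivalent to $X_{[a,r-\epsilon]}$ with the descending links of the vertices at level $r$ coned off. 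Call $V$ the set of those vertices; since $f$ is non-constant on every edge no two of them are adjacent, so by cocompactness there is a uniform $\delta>0$ making the balls $\overline{B(v,\delta)}$ ($v\in V$) pairwise disjoint, each meeting only cubes through its centre; shrinking $\epsilon$ further I may also assume that along every edge descending from a $v\in V$ the value of $f$ has already dropped below $r-\epsilon$ inside $B(v,\delta)$.

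Next, the local picture at a fixed $v\in V$. The ball $B(v,\delta)$ is the Euclidean cone of radius $\delta$ on $\lk(v,X)$, and because $f\chi_c$ is affine on each cube $c\ni v$ with $f(v)=r$, the sublevel set $B(v,\delta)\cap A=\{f\le r\}\cap B(v,\delta)$ is also a cone with apex $v$: in coordinates on $c$ with $v$ at the origin one has $f=r+\sum a_ix_i$ with every $a_i\ne 0$, so $\{f\le r\}\cap c=\{\sum a_ix_i\le0\}$ is swept out by segments from $v$. Running, cube by cube, the linear homotopy that keeps the coordinates $x_i$ with $a_i<0$ and scales those with $a_i>0$ down to $0$ stays inside $\{f\le r\}$ and deformation retracts $\{f\le r\}\cap c$ onto the face of $c$ spanned by $v$ and its descending edges; these homotopies match on common faces (an edge at $v$ is ascending or descending independently of the cube it lies in), and since $\lk(v,X)$ is flag the union of these descending faces is exactly $\lk_\downarrow(v)$. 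Hence $B(v,\delta)\cap A$ deformation retracts onto the cone $C_v$ of radius $\delta$ on $\lk_\downarrow(v)$. Thus $C_v$ is contractible, while $C_v\cap X_{[a,r-\epsilon]}$, which is non-empty by the choice of $\epsilon$, deformation retracts (push the radial coordinate outward) onto a copy of $\lk_\downarrow(v)$: this copy is the one along which we shall cone.

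Finally I would globalise. The region $f^{-1}([r-\epsilon,r])\smallsetminus\bigcup_{v\in V}B(v,\delta)$ contains no vertices of $X$, so the deformation retraction behind the preceding theorem pushes it down onto $X_{r-\epsilon}$; performing this push on $A$ simultaneously with the cube-wise collapse above inside the balls should give a deformation retraction of $A=X_{[a,r]}$ onto $X_{[a,r-\epsilon]}\cup\bigcup_{v\in V}C_v$, in which $C_v$ is attached to $X_{[a,r-\epsilon]}$ along $C_v\cap X_{[a,r-\epsilon]}\simeq\lk_\downarrow(v)$. Since each $C_v$ is contractible and all the relevant inclusions are cofibrations, this space is homotopy equivalent to $X_{[a,r-\epsilon]}$ with a cone glued on along $\lk_\downarrow(v)$ for each $v\in V$ — that is, with the descending links coned off — and combining with the reductions of the first paragraph gives the theorem. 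The hard part is the globalisation: the generic ``flow down the $f$-direction'' on the vertex-free slab and the cube-wise collapse near the vertices of $V$ have to be arranged to agree on the spheres $\partial B(v,\delta)$, and to behave correctly in the cubes straddling those spheres, so that together they are a genuine deformation retraction of $A$ onto the asserted subcomplex. That matching is the real content of the lemma; the local analysis at $v$ is essentially the linear algebra of a single affine function on a cube, and the final assembly is formal homotopy theory.
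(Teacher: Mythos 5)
This statement is quoted in the paper from Bestvina--Brady (their Lemma 2.5) and is not proved there, so your attempt has to be measured against the argument in the source, which your outline follows in spirit: reduce by the preceding no-vertex lemma to a slab with the single vertex level $r$ at the top, identify the part of the sublevel set near each vertex $v\in f^{-1}(r)$ with a cone on the descending link, and finish with a cofibration/mapping-cone argument. Your reduction step and your local computation inside a single cube (with $f$ affine, scaling the ascending coordinates to $0$ stays in $\{f\le r\}$ and lands on the descending face, so near $v$ the sublevel set retracts onto a cone on $\mathrm{Lk}_\downarrow(v)$) are correct.

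The gap is exactly where you say it is, and it is genuine rather than cosmetic: the deformation retraction of $X_{[a,r]}$ onto $X_{[a,r-\epsilon]}\cup\bigcup_{v}C_v$ is never constructed, and the two ingredients you propose to run ``simultaneously'' do not agree where they overlap. For a point $x\in\partial B(v,\delta)$ with $r-\epsilon<f(x)<r$, the ball homotopy moves $x$ onto the descending cone $C_v$ (generally not to level $r-\epsilon$), while the slab homotopy moves $x$ down to level $r-\epsilon$ (generally not into $C_v$); there is no common map to glue. Moreover a single cube can contain two diagonally opposite vertices of $f^{-1}(r)$ (for example a square on which $f=r+x_1-x_2$), so it meets two of your balls and the slab at once, a configuration the ball-local analysis does not govern. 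The repair is to drop the metric balls and work cube by cube, as in the source: in each cube $c$ retract $c\cap X_{[a,r]}$ onto $\bigl(c\cap X_{[a,r-\epsilon]}\bigr)\cup\bigl(\text{the descending faces of }c\text{ at its level-}r\text{ vertices}\bigr)$ by an affine homotopy stopped at level $r-\epsilon$; these retractions are natural under passage to faces, hence glue over all of $X$, and the union of the descending faces at $v$ is by definition the cone on $\mathrm{Lk}_\downarrow(v)$, attached to $X_{[a,r-\epsilon]}$ along a copy of $\mathrm{Lk}_\downarrow(v)$, after which your final cofibration step applies verbatim. So the strategy is the right one, but the step you defer is the actual content of the lemma, and in the two-piece form you describe it would not go through.
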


We can now deduce a lot about the topology of the level sets. We know how they change as we pass to larger intervals and so we have the following. 

\begin{cor}[Bestvina--Brady, \cite{bestvina_morse_1997}, Corollary 2.6]\label{cor1} Let $I,I'$ be as above.
\begin{enumerate}
\item If each ascending and descending link is homologically $(n-1)$-connected, then the inclusion $X_I\hookrightarrow X_{I'}$ induces an isomorphism on $H_i$ for $i\leq n-1$ and is surjective for $i=n$.
\item If the ascending and descending links are connected, then the inclusion $X_I\hookrightarrow X_{I'}$ induces a surjection on $\pi_1$. 
\item If the ascending and descending links are simply connected, then the inclusion $X_I\hookrightarrow X_{I'}$ induces an isomorphism on $\pi_1$.
\end{enumerate}
\end{cor}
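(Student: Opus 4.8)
\emph{Plan.} The proof is an induction on critical values built on Theorem~\ref{homoeq}, which describes the passage across a single critical value, together with a colimit argument to handle the case when $I'$ is unbounded. I treat the case $\min I=\min I'$; the case $\max I=\max I'$ is identical with ``ascending'' in place of ``descending''.

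\emph{Step 1: exhaust $I'$ one critical value at a time.} Since $f\big(X^{(0)}\big)$ is discrete, the critical values lying in $I'\smallsetminus I$ form a locally finite increasing list $r_1<r_2<\cdots$. I would pick real numbers $s_0=\max I<s_1<s_2<\cdots$ with $r_k\in(s_{k-1},s_k)$ and $s_k\to\sup I'$ (the list terminating at $s_m=\sup I'$ when $I'$ is bounded), and set $I_k=[\min I,s_k]$, so that $I=I_0\subset I_1\subset I_2\subset\cdots$ exhausts $I'$ and each $I_k\smallsetminus I_{k-1}$ contains exactly the single critical value $r_k$. After subdividing $X$ so that every critical vertex is a $0$-cell, each $X_{I_k}$ is a subcomplex and $X_{I'}=\bigcup_k X_{I_k}$. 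Because $H_\ast$ and $\pi_1$ commute with ascending unions of CW subcomplexes, because a directed colimit of isomorphisms is an isomorphism, and because for a tower of epimorphisms the map from the first group to the colimit is an epimorphism, it suffices to prove each assertion for one inclusion $X_{I_{k-1}}\hookrightarrow X_{I_k}$.

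\emph{Step 2: the single step.} Fix $k$, write $Y=X_{I_{k-1}}$, and let $A_v={\rm Lk}_{\downarrow}(v,X)$ as $v$ runs over the vertices with $f(v)=r_k$. By Theorem~\ref{homoeq}, $X_{I_k}$ is homotopy equivalent to the space $Y'$ got from $Y$ by attaching a cone $CA_v$ along the inclusion $A_v\hookrightarrow Y$ for each such $v$. The computation to carry out is: each $CA_v$ is contractible, so excision and the long exact sequences of the pairs $(CA_v,A_v)$ give $H_i(Y',Y)\cong\bigoplus_v\widetilde H_{i-1}(A_v)$ for all $i$; feeding this into the long exact sequence of $(Y',Y)$ proves part~(1), since when each $A_v$ is homologically $(n-1)$-connected the relative homology vanishes in degrees $\le n$, forcing $H_i(Y)\to H_i(Y')$ to be an isomorphism for $i\le n-1$ and a surjection for $i=n$. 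For parts~(2) and~(3) I would run van Kampen on the cover of $Y'$ by a neighbourhood of $Y$ and the open cones, whose intersection is homotopy equivalent to $\bigsqcup_v A_v$: connectivity of the $A_v$ makes $\pi_1(Y)\to\pi_1(Y')$ surjective with kernel the normal closure of the images of the groups $\pi_1(A_v)$, and simple connectivity of the $A_v$ makes that kernel trivial, hence $\pi_1(Y)\to\pi_1(Y')$ an isomorphism.

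\emph{Main obstacle.} The argument is conceptually routine; the care is needed in Step~1 --- checking that nothing is lost on $H_\ast$ or $\pi_1$ when $X_{I'}$ is non-compact, e.g.\ $I'=[\min I,\infty)$ --- and in interpreting ``coning off'' correctly when $f^{-1}(r_k)$ contains infinitely many vertices, so that the relative homology really is the direct sum $\bigoplus_v\widetilde H_{\ast-1}(A_v)$, with one cone point per vertex, and not the homology of a mapping cone on a single cone point. Once these are settled, all three parts drop out of the one-step analysis.
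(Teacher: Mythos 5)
The paper does not prove this corollary at all: it is quoted verbatim from Bestvina--Brady (Corollary 2.6 of \cite{bestvina_morse_1997}), so there is no internal proof to compare against. Your argument is the standard one (and essentially the argument in Bestvina--Brady itself): exhaust $I'$ by intervals each crossing one critical value, apply Theorem \ref{homoeq} at each step, compute the effect of coning off the links via excision/Mayer--Vietoris for homology and van Kampen for $\pi_1$, and pass to the colimit; this is correct. Two small touch-ups. First, the claim that after subdividing, each $X_{I_k}$ is a subcomplex is not needed and not literally true for a cubical subdivision (the level sets cut cubes along affine slices); for the colimit step it suffices that every compact subset of $X_{I'}$ has bounded $f$-image and hence lies in some $X_{I_k}$, which handles both $H_\ast$ and $\pi_1$ (loops and homotopies being compact), or one can note that the slices give a polyhedral CW structure. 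Second, as the corollary is actually used later in the paper (e.g.\ comparing $X_t$ with $X_{[t-N,t+N]}$ in Theorem \ref{notfp}), both endpoints of the interval move; your two one-sided cases cover this after the one-line factorization $X_{[a,b]}\hookrightarrow X_{[a,b']}\hookrightarrow X_{[a',b']}$, which you should state explicitly. With those remarks your proof is complete.
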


Knowing that the direct limit of this system is a contractible space allows us to compute the finiteness properties of the kernel of $\phi$. 

\begin{theorem}[Bestvina--Brady, \cite{bestvina_morse_1997}, Theorem 4.1]\label{bbmorse}
Let $f\colon X\to \RR$ be a $\phi$-equivariant Morse function and let $H=\ker(\phi)$. If all ascending and descending links are simply connected, then $H$ is finitely presented (that is, $H$ is of type $F_2$).
\end{theorem}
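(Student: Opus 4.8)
The plan is to realise $H = \ker(\phi)$ as the fundamental group of a compact complex, using the Morse-theoretic machinery recalled above. Filter the CAT(0) — hence contractible — complex $X$ by the \emph{slabs} $X_n := X_{[-n,n]} = f^{-1}([-n,n])$, so that $X=\bigcup_{n\ge 1}X_n$ is an increasing union carrying the colimit topology (every compact subset of $X$ has bounded $f$-image). The two things to establish are: (i) each $X_n$ ($n\ge 1$) is connected and simply connected; (ii) $H$ acts freely, cellularly, properly and cocompactly on each $X_n$. Granting these, fix $n\ge 1$ and put $Y:=X_n$: then $Y\to Y/H$ is a regular covering with deck group $H$, and since $Y$ is simply connected it is the universal cover, so $\pi_1(Y/H)\cong H$; as $Y/H$ is a compact CW complex its fundamental group is finitely presented, so $H$ is of type $F_2$.

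For (i), note $H_0(X;\ZZ)=\ZZ$ and $\pi_1(X)=1$, and since $X$ is the colimit of the $X_n$ these groups are $\varinjlim H_0(X_n)$ and $\varinjlim\pi_1(X_n)$. I compare $X_n\hookrightarrow X_{n+1}$ by enlarging the interval past one value of $f(X^{(0)})$ at a time: only finitely many such values lie in $[n,n+1]$ because $f(X^{(0)})$ is discrete, and across subintervals containing none of them the inclusion is a homotopy equivalence (\cite{bestvina_morse_1997}, Lemma 2.3). Passing a single critical value from above (resp.\ below) replaces the slab, up to homotopy, by the result of coning off the descending (resp.\ ascending) links of the vertices at that level, by Theorem \ref{homoeq}; by hypothesis every such link is simply connected, hence connected, hence homologically $0$-connected, so Corollary \ref{cor1}(1) and (3) give that the inclusion is an isomorphism on $H_0$ and on $\pi_1$. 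Composing, $X_n\hookrightarrow X_{n+1}$ is an isomorphism on $H_0$ and on $\pi_1$ for every $n$; since every map in the system is an isomorphism, each term equals the colimit, i.e.\ $H_0(X_n)\cong\ZZ$ and $\pi_1(X_n)\cong 1$, which is (i).

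For (ii): as $f$ is $\phi$-equivariant and $H=\ker\phi$, $f$ is $H$-invariant, so $H$ preserves each $X_n$, and the inherited action is free, cellular and proper. We may assume $\phi\ne 0$ (else $H=G$ and there is nothing to prove); then every $G$-orbit meets $X_{[0,1)}\subseteq X_n$ for $n\ge 1$, so $X_n/H\to X/G$ is onto, and over each open cell of the compact complex $X/G$ its fibres are finite: the $G$-translates of a given cell that again lie in $X_n$ satisfy a bounded constraint on $\phi$, hence form finitely many cosets of $\ker\phi$, hence — as $H\trianglelefteq G$ — finitely many $H$-orbits of cells. Thus $X_n/H$ has finitely many cells, so $H$ acts cocompactly on $X_n$.

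The content of the proof is concentrated in step (i), and there the essential input is Corollary \ref{cor1} together with Theorem \ref{homoeq} — Bestvina--Brady's analysis of how a sublevel set changes across a critical value — which we are free to assume; were that not available, re-proving it (the "coning off (co)descending links" computation, plus a Mayer--Vietoris/van Kampen argument) would be the real obstacle. Beyond that, the only points needing care are the $\pi_0$/connectedness bookkeeping in (i) and the cocompactness of the slab action in (ii), both routine given that $G$ acts cocompactly and $f(X^{(0)})$ is discrete. (The identical argument with "$(k-1)$-connected" in place of "simply connected" throughout, using all of Corollary \ref{cor1}, makes $X_n$ $(k-1)$-connected and hence $H$ of type $F_k$; the case $k=2$ is the statement above.)
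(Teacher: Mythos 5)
The paper does not prove this statement: Theorem~\ref{bbmorse} is quoted verbatim from Bestvina--Brady \cite{bestvina_morse_1997} (Theorem~4.1) and used as a black box, so there is no ``paper's own proof'' to compare against. Judged on its own terms, your argument is correct and is essentially the standard Bestvina--Brady argument: filter the contractible complex $X$ by slabs $X_n = f^{-1}([-n,n])$, pass critical levels one at a time using Lemma~2.3 / Theorem~\ref{homoeq} and Corollary~\ref{cor1}, observe that all bonding maps are isomorphisms on $H_0$ and $\pi_1$ so that each $X_n$ inherits connectivity and simple connectivity from the colimit $X$, and then exhibit $H$ as $\pi_1$ of the finite complex $X_n/H$.

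Two small points worth tightening, both essentially bookkeeping. First, $X_n$ is not a subcomplex of the cube complex $X$ but an affine slice; one should invoke (as Bestvina--Brady do) that level sets and slabs of a Morse function on an affine cell complex carry a natural affine cell structure on which $H$ still acts freely and cellularly, so that $X_n/H$ really is a finite CW complex. Second, your cocompactness argument counts $G$-translates of a cell ``that again lie in $X_n$''; since cells of $X$ are sliced by $f^{-1}([-n,n])$ rather than lying wholly inside it, it is cleaner to say \emph{meet} $X_n$, or better, to descend to $X/H$: the function $f$ descends to a $\ZZ$-equivariant $f'\colon X/H\to\RR$, a compact fundamental domain $D$ for the residual $\ZZ$-action has $f'(D)\subset[-M,M]$, and only the finitely many translates $r\cdot D$ with $r\in[-n-M,\,n+M]$ can meet $(f')^{-1}([-n,n]) = X_n/H$, which is therefore compact. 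With those adjustments the proof is complete.

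One remark on scope: you note at the end that replacing ``simply connected'' by ``$(k-1)$-connected'' gives $F_k$. This is true and is in fact the full content of Bestvina--Brady's Theorem~4.1, but it requires the Hurewicz-flavoured upgrade from homological to homotopical connectivity of the slabs, which Corollary~\ref{cor1} as stated only supplies in degrees $0$ and $1$; for $k>2$ one needs the corresponding higher-connectivity version. For the $F_2$ statement actually at issue, what you wrote suffices.
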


We would also like to have conditions which will allow us to deduce that $H$ does not satisfy certain other finiteness properties. A well-known result in this direction is:

\begin{prop}[Brown, \cite{brown_cohomology_1982}, p. 193]\label{propb}
Let $H$ be a group acting freely, properly, cellularly and cocompactly on a cell complex $X$. Assume further that $\widetilde{H}_i(X,\ZZ)=0$ for $0\leq i\leq n-1$ and that $\widetilde{H}_n(X,\ZZ)$ is not finitely generated as a $\ZZ H$-module. Then $H$ is of type $FP_n$ but not $FP_{n+1}$.
\end{prop}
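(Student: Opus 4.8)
\noindent\emph{Proof proposal.} The plan is to work directly with the cellular chain complex $C_*(X)$, which is a complex of $\ZZ H$-modules because the $H$-action is cellular. Since the action is free, each $C_k(X)$ is free on the set of $H$-orbits of $k$-cells, and since it is cocompact there are only finitely many such orbits; hence every $C_k(X)$ is a finitely generated free $\ZZ H$-module. The hypothesis $\widetilde H_i(X,\ZZ)=0$ for $0\le i\le n-1$ is exactly the statement that the augmented complex
$$\cdots\to C_n(X)\xrightarrow{\ \partial_n\ }C_{n-1}(X)\to\cdots\to C_0(X)\xrightarrow{\ \varepsilon\ }\ZZ\to 0$$
is exact at $\ZZ$ and at $C_i(X)$ for $0\le i\le n-1$. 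Truncating it after $C_n(X)$ produces a partial resolution of the trivial $\ZZ H$-module of length $n$ by finitely generated (free) modules, which is precisely the assertion that $H$ is of type $FP_n$. (For $n=0$ this degenerates to the trivial statement $FP_0$; the argument below is uniform, so I describe it for $n\ge 1$.)

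For the remaining claim I would invoke the standard syzygy criterion, a consequence of the generalized Schanuel lemma (see \cite{brown_cohomology_1982}): if $F_n\xrightarrow{\ d\ }F_{n-1}\to\cdots\to F_0\to\ZZ\to 0$ is a partial resolution by finitely generated projective $\ZZ H$-modules, then $H$ is of type $FP_{n+1}$ if and only if $\ker d$ is finitely generated as a $\ZZ H$-module. Applying this with $F_k=C_k(X)$, the relevant kernel is the module of $n$-cycles $Z_n(X)=\ker\partial_n$, so it is enough to prove that $Z_n(X)$ is \emph{not} finitely generated over $\ZZ H$. But $H_n(X,\ZZ)=Z_n(X)/B_n(X)$ is a quotient of $Z_n(X)$ as a $\ZZ H$-module, so if $Z_n(X)$ were finitely generated then so would be $H_n(X,\ZZ)=\widetilde H_n(X,\ZZ)$, contradicting the hypothesis. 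Hence $Z_n(X)$ is not finitely generated, so $H$ is not of type $FP_{n+1}$; together with the previous paragraph this is the proposition.

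The only non-bookkeeping ingredient, and the step I would be most careful about, is the ``only if'' direction of the syzygy criterion: one must know that ``the top syzygy of a finitely generated partial projective resolution is finitely generated'' is a property of $H$ independent of the chosen resolution, which is exactly where the generalized Schanuel lemma is used (the two syzygies agree up to finitely generated projective summands, hence one is finitely generated precisely when the other is). Everything else — recognising the chain groups as finitely generated free $\ZZ H$-modules, rephrasing $(n-1)$-acyclicity as exactness, and the observation that homology is a quotient of cycles — is routine, so I anticipate no real obstacle beyond correctly citing that homological-algebra input.
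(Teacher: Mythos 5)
Your proof is correct. The paper offers no proof of this proposition (it is quoted from Brown with only a citation), and your argument --- the cellular chain complex is a complex of finitely generated free $\ZZ H$-modules by freeness and cocompactness, the vanishing of $\widetilde{H}_i(X,\ZZ)$ for $i\leq n-1$ turns its truncation into a partial resolution giving $FP_n$, and the generalized-Schanuel kernel criterion together with the fact that $\widetilde{H}_n(X,\ZZ)$ is a $\ZZ H$-quotient of $Z_n(X)$ rules out $FP_{n+1}$ --- is exactly the standard proof supplied by the cited reference.
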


In \cite{brady_branched_1999}, the above result was used to prove that a certain group is not of type $FP_3$. In our theorems, not all the links will satisfy the assumptions of \cite[Theorem 4.7]{brady_branched_1999} and hence we require the following.

\begin{thm}\label{notfp}
Let $f\colon X\to \RR$ be a $\phi$-equivariant Morse function and let $H=\ker(\phi)$. Suppose that for all vertices $v$ the reduced homology of ${\rm Lk}_{\uparrow}(v)$ and ${\rm Lk}_{\downarrow}(v)$ vanishes in dimensions $0, \dots, n-1$ and $n+1$. Further assume that there is a vertex $v'$ such that $\widetilde{H}{_n}\big({\rm Lk}_{\uparrow}(v')\big)\neq 0$ or $\widetilde{H}{_n}\big({\rm Lk}_{\downarrow}(v')\big)\neq 0$ (possibly both). Then $H$ is of type $FP_n$ but not of type $FP_{n+1}$.
\end{thm}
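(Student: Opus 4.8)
The plan is to apply Brown's criterion, Proposition \ref{propb}, to the complex $Y=X_{[-m,m]}$ for a large integer $m$. Since $H=\ker(\phi)$ acts freely, properly, cellularly and cocompactly on every level set $X_{[-m,m]}$, it suffices to show that $\widetilde H_i(X_{[-m,m]};\ZZ)=0$ for $i\le n-1$ while $\widetilde H_n(X_{[-m,m]};\ZZ)$ is not finitely generated as a $\ZZ H$-module. Filter $X$ by a cofinal nested sequence of closed intervals $I_1\subset I_2\subset\cdots$, each obtained from the previous one by extending across a single vertex level of $f$ (one endpoint moves up, or one moves down), so that $\bigcup_j X_{I_j}=X$ and $X_{[-m,m]}$ occurs as one of the $X_{I_j}$. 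By Theorem \ref{homoeq} each inclusion $X_{I_j}\hookrightarrow X_{I_{j+1}}$ is, up to $H$-equivariant homotopy, the coning-off of the descending links of the vertices at the new top level, or of the ascending links of the vertices at the new bottom level; the vertices at a given level form finitely many $H$-orbits, and the long exact sequence of the pair $(X_{I_{j+1}},X_{I_j})$ identifies the relative homology with $\bigoplus_v\widetilde H_{*-1}$ of these links.

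Because every ascending and descending link is homologically $(n-1)$-connected, Corollary \ref{cor1}(1) makes every inclusion in the filtration an isomorphism on $H_i$ for $i\le n-1$; as the direct limit $X$ is CAT(0), hence contractible, each $\widetilde H_i(X_{I_j})$ is isomorphic to $\widetilde H_i(X)=0$ for $i\le n-1$. Using now the hypothesis that $\widetilde H_{n+1}$ and $\widetilde H_{n-1}$ of every ascending and descending link vanish, the same long exact sequences show that every inclusion in the filtration induces an \emph{injection} on $\widetilde H_{n+1}$; a directed system of abelian groups with injective transition maps whose colimit $\widetilde H_{n+1}(X)$ is zero has every term zero, so $\widetilde H_{n+1}(X_{I_j})=0$ for all $j$.

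For the degree-$n$ computation, feeding the vanishing of $\widetilde H_{n-1}$ of the links and of $\widetilde H_{n+1}(X_{I_{j+1}})$ into the long exact sequence of the pair shows that each inclusion $X_{I_j}\hookrightarrow X_{I_{j+1}}$ induces a surjection $\widetilde H_n(X_{I_j})\twoheadrightarrow\widetilde H_n(X_{I_{j+1}})$ whose kernel is isomorphic, as a $\ZZ H$-module, to $\bigoplus_v\widetilde H_n(\lk_{\downarrow}(v))$ (resp.\ $\bigoplus_v\widetilde H_n(\lk_{\uparrow}(v))$). Hence $\widetilde H_n(X_{[-m,m]})$ is exhausted by the increasing chain of $\ZZ H$-submodules given by the kernels of the maps into the later $X_{I_j}$, and the successive quotients of this chain are exactly the above direct sums of link homologies. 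By hypothesis some vertex $v'$ has $\widetilde H_n(\lk_{\uparrow}(v'))\neq0$ or $\widetilde H_n(\lk_{\downarrow}(v'))\neq0$; since $\phi\neq0$, $G$-translates of $v'$ occur at arbitrarily high and arbitrarily low levels, so infinitely many of these successive quotients are non-zero and the chain is strictly increasing. A finitely generated module cannot be the union of a strictly increasing chain of submodules, so $\widetilde H_n(X_{[-m,m]})$ is not finitely generated over $\ZZ H$, and Proposition \ref{propb} gives that $H$ is of type $FP_n$ but not of type $FP_{n+1}$.

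The step that needs the most care is the $H$-equivariant form of the Morse theory of Theorem \ref{homoeq}: one must check that the homotopy equivalences there can be realised $H$-equivariantly, so that the long exact sequences above are sequences of $\ZZ H$-modules and the subgroups that appear are $H$-invariant. One should also note exactly where the degree-$(n+1)$ hypothesis is used: it is what forces $\widetilde H_{n+1}(X_{I_j})=0$, which in turn pins the kernel of $\widetilde H_n(X_{I_j})\to\widetilde H_n(X_{I_{j+1}})$ down to the full link homology rather than to a mere quotient of it --- precisely what is needed to conclude that infinitely many successive quotients are non-zero.
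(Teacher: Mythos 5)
Your proof is correct and relies on the same core ingredients as the paper's: the Morse filtration of $X$ by level sets $X_{I_j}$, Corollary~\ref{cor1} together with contractibility of $X$ to kill $\widetilde H_i$ of level sets for $i\leq n-1$, the vanishing of $\widetilde H_{n+1}$ of the links to get injectivity of the transition maps on $\widetilde H_{n+1}$, and the special vertex $v'$ (together with the $\ZZ$-translation coming from the splitting of $G\to\ZZ$) to produce infinitely many fresh contributions in degree $n$. The difference is in how the final non-finite-generation step is organised. The paper argues by contradiction: assuming $\widetilde H_n(X_t)$ is finitely generated over $\ZZ H$ it forces $\widetilde H_n(X_{[t-N,t+N]})=0$ for large $N$, then reads off from the long exact sequence that attaching the ascending link of a translate of $v'$ makes $\widetilde H_{n+1}(X_{[t-N-s,t+N]})\neq 0$, which by injectivity persists in the colimit, contradicting contractibility of $X$. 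You read the same long exact sequence the other way: first deduce $\widetilde H_{n+1}(X_{I_j})=0$ for all $j$ from injectivity plus the vanishing colimit, then identify $\ker\bigl(\widetilde H_n(X_{I_j})\to\widetilde H_n(X_{I_{j+1}})\bigr)$ with $\bigoplus_v\widetilde H_n(\lk(v))$, and exhibit an exhaustive chain of $\ZZ H$-submodules of $\widetilde H_n(X_{[-m,m]})$ with infinitely many nonzero successive quotients, so it is not finitely generated. Both are sound; yours has the modest advantage of being direct and of pinning down the successive quotients explicitly, while the paper's contradiction version is slightly shorter. You correctly flag the point that deserves the most care, namely that the Bestvina--Brady homotopy equivalences and the identification of $H_*(X_{I_{j+1}},X_{I_j})$ with the sum of link homologies need to be taken $H$-equivariantly so that the exact sequences are of $\ZZ H$-modules; this is indeed what makes the filtration a filtration by $\ZZ H$-submodules.
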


\begin{proof}
From Corollary \ref{cor1} we know that $\widetilde{H}{_i}\big(X_{[t-N,t+N]}\big) = \widetilde{H}{_i}(X_t)$ for $0\leq i\leq n-1$ and all $N\in\NN$. Since homology commutes with direct limits, we can pass from $X_t$ to $X$ and deduce that all these homology groups are trivial. We now show that $\widetilde{H}{_n}{(X_t)}$ is not finitely generated as a $\ZZ H$-module. 

Suppose that we have a finite set of $n$-cycles $z_1, \dots, z_l$ that generate $\widetilde{H}{_n}(X_t)$ as a $\ZZ H$-module. There exists an $N$ such that each $z_i$ bounds an $(n+1)$-chain in $X_{[t-N,t+N]}$. So the inclusion induced map $\widetilde{H}{_n}{(X_t)}\to \widetilde{H}{_n}{(X_{[t-N,t+N]})}$ is zero. But by Corollary \ref{cor1} it is also onto, so $\widetilde{H}{_n(X_{[t-N,t+N]})} = 0$.

Theorem \ref{homoeq} implies that for every closed interval $J$ containing $[t-N,t+N]$, $X_J$ can be obtained (up to homotopy) from $X_{[t-N,t+N]}$ by coning off the ascending and descending links of vertices $v$ such that $f(v)\in\big(J\smallsetminus[t-N,t+N]\big)$. Since all these links have trivial homology in dimension $n+1$, we can see from the Mayer--Vietoris sequence that the inclusion induced map $$\widetilde{H}{_{n+1}}{(X_{J'})}\to\widetilde{H}{_{n+1}}{(X_{J})}$$ is injective for all $[t-N, t+N]\subseteq J'\subseteq J$. 

We will assume that the vertex $v'$ satisfies $\widetilde{H}{_n}\big({\rm Lk}_{\uparrow}(v')\big)\neq 0$. The case with descending links is analogous. Using the fact that the short exact sequence $$0\to H\to G\to \ZZ\to0$$ splits, we get a $\ZZ$ action on $X$ which gives us an action on the collection of level sets with $r\in\ZZ$ sending $X_t$ to $X_{t+r}$. Translating by this action, we can assume that the vertex $v'$ is in $X_{[t-N-s,t+N]}\smallsetminus X_{[t-N,t+N]}$ for some large $s$. Let $L$ be the union of all the ascending links coned off in the process of going from $X_{[t-N,t+N]}$ to $X_{[t-N-s,t+N]}$.

Once again, looking at the Mayer--Vietoris sequence we see that $$\widetilde{H}{_{n+1}}{\big(X_{[t-N-s,t+N]}\big)\to\widetilde{H}{_{n}}{(L)}}$$ is a surjective map. Since the latter is non-zero by assumption, the former must also be non-zero. This implies that $\widetilde{H}{_{n+1}}(X)$ is non-zero since homology commutes with direct limits; but $X$ is a contractible space, so we have a contradiction.

Therefore $\widetilde{H}_n(X_t)$ must be infinitely generated, so by Proposition \ref{propb} the result follows. 
\end{proof}

\section{Hyperbolisation of products of graphs}\label{bradygen2}

In this section we are going to take branched covers of products of graphs and prove that the result has hyperbolic fundamental group, with a finitely presented subgroup that is not of type $F_3$. We will do this in the following way. 

\begin{enumerate}
\item Start with three 2-full graphs $\Gamma_1$, $\Gamma_2$ and $\Gamma_3$ (Definition \ref{2full}).
\item Find a locally isometric copy of $L = \Gamma_1\sqcup\Gamma_2\sqcup\Gamma_3$ in $K = \Gamma_1\times\Gamma_2\times\Gamma_3$. 
\item Take a cover of $K\smallsetminus L$ and complete to get a branched cover $X$ of $K$. 
\item Define a Morse function on the universal cover $\tilde{X}$ of $X$ and examine the ascending and descending links of this function; from this we will conclude that there is a finitely presented subgroup that is not of type $F_3$. 
\item Finally, prove that $\tilde{X}$ is a hyperbolic space. 
\end{enumerate}

\begin{definition}\label{2full}
A connected graph $\Gamma$ is {\em 2-full} if the vertices of $\Gamma$ can be divided into 2 sets $A$ and $B$ such that every edge has one endpoint in $A$ and the other in $B$. 
\end{definition}

For simplicial graphs, this is equivalent to being bipartite. However, we allow multiple edges in our graphs and will reserve the use of ``bipartite'' for simplicial graphs. 

We will prove the following. 

\begin{customthm}{A}\label{bradygen}
Let $\Gamma_1$, $\Gamma_2$ and $\Gamma_3$ be three 2-full graphs such that the valence of any vertex is at least $4$. Then there is a finite branched cover $X$ of $\Gamma_1\times\Gamma_2\times\Gamma_3$ such that there are no isometrically embedded flat planes in $\tilde{X}$. Furthermore, there is a finitely presented subgroup of $\pi_1(X)$ that is not of type $F_3$.
\end{customthm}

\subsection{The branched cover}

Let the vertices of $\Gamma_i$ be divided into two sets $A_i, B_i$ as in the definition of 2-full and let $K = \Gamma_1\times\Gamma_2\times\Gamma_3$. 

Our branching locus will be 
$$L = (\Gamma_1  \times A_2      \times B_3)\sqcup
	  (B_1       \times\Gamma_2  \times A_3)\sqcup
	  (A_1       \times B_2      \times\Gamma_3).$$

This is a locally convex subcomplex of $K$. We now check that $\lk(c, K)\smallsetminus L$ is connected and non-empty for all cubes in $L$. We will do the case of a cube on $\Gamma_1\times A_2\times B_3$; the other cases are identical. If $c$ is the edge $[0,1]\times \{(a, b)\}$, then $\lk(c, K) = S^0\ast\lk(a, \Gamma_2)\ast\lk(b, \Gamma_3)$. We are removing the copy of $S^0$ and thus obtain a deformation retract onto $\lk(a, \Gamma_2)\ast\lk(b, \Gamma_3)$. If $c$ is the vertex $(v_1, a, b)$, then $\lk(c, K) = \lk(v_1, \Gamma_1)\ast\lk(a, \Gamma_2)\ast\lk(b, \Gamma_3)$; we are removing the copy of $\lk(v_1, \Gamma_1)$. The complex $\lk(v_1, \Gamma_1)\ast\lk(a, \Gamma_2)\ast\lk(b, \Gamma_3)\smallsetminus \lk(v_1, \Gamma_1)$ deformation retracts onto a join of two non-empty sets. Therefore, $\lk(c, K)\smallsetminus L$ is connected and non-empty for all cubes in $L$.

We now consider the complex $K\smallsetminus L$. We have three surjective projections:
\begin{align*}
p_1\colon K\smallsetminus L&\to \Gamma_2\times\Gamma_3\smallsetminus (A_2\times B_3),\\
p_2\colon K\smallsetminus L&\to \Gamma_3\times\Gamma_1\smallsetminus (A_3\times B_1),\\
p_3\colon K\smallsetminus L&\to \Gamma_1\times\Gamma_2\smallsetminus (A_1\times B_2),
\end{align*}
which are the restrictions of the projection maps $K\to \Gamma_i\times\Gamma_j$.

\begin{prop}
Each of the complexes $\Gamma_i\times\Gamma_j\smallsetminus(A_i\times B_j)$ deformation retracts onto a graph. 
\end{prop}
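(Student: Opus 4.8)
The plan is to analyze the cube complex $Y = \Gamma_i \times \Gamma_j$ and exhibit an explicit deformation retraction of $Y \smallsetminus (A_i \times B_j)$ onto a subcomplex which is a graph. Since $\Gamma_i$ is $2$-full with vertex partition $A_i \cup B_i$, every edge of $\Gamma_i$ joins $A_i$ to $B_i$, and similarly for $\Gamma_j$. The open set we are removing is the set of vertices of $Y$ whose first coordinate lies in $A_i$ and whose second coordinate lies in $B_j$; removing a vertex from a square complex removes (the interiors of) all cubes containing that vertex. The first thing I would do is describe, square by square, what is left after the deletion: a square $e \times f$ of $Y$ with $e$ an edge of $\Gamma_i$ (endpoints $a \in A_i$, $b \in B_i$) and $f$ an edge of $\Gamma_j$ (endpoints $a' \in A_j$, $b' \in B_j$) has exactly one of its four vertices, namely $(a, b')$, deleted; so the square minus that corner deformation retracts onto the union of the two edges $e \times \{a'\}$ and $\{b\} \times f$ opposite to $(a,b')$, i.e. onto an ``L-shaped'' path through the corner $(b, a') \in B_i \times A_j$.

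Next I would assemble these local retractions into a global one. The natural target is the subcomplex $Z \subset Y \smallsetminus (A_i \times B_j)$ consisting of all edges of $Y$ that survive the deletion — equivalently, all edges of the form $e \times \{v\}$ or $\{v\} \times f$ that do not contain a deleted vertex — together with all surviving vertices. One checks that $Z$ is a graph (it is $1$-dimensional by construction) and that it is exactly the set of points of $Y \smallsetminus (A_i \times B_j)$ fixed by the square-wise retractions described above. The key point is that these local retractions are compatible on overlaps: two squares of $Y$ meet in an edge or a vertex, and on such a face the prescriptions agree because the retraction of a square fixes its two surviving opposite edges pointwise. Hence they glue to a global deformation retraction $Y \smallsetminus (A_i \times B_j) \to Z$. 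I would phrase this either as a single straight-line-style homotopy performed simultaneously in all squares, or inductively over a filtration of $Y$ by number of squares, pushing each newly added square onto the part of its boundary already in the image; the gluing compatibility is what makes the induction go through.

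The main obstacle is bookkeeping rather than anything deep: one must make sure that $Z$ really is nonempty and genuinely $1$-dimensional in every case (this uses that $\Gamma_i$, $\Gamma_j$ have no isolated vertices and, for the intended application, valence $\geq 4$, so in particular $B_i$ and $A_j$ are nonempty and every surviving vertex lies on a surviving edge), and that the local retractions are well-defined and continuous where several squares share a vertex — the worry being a vertex of $Y$ all of whose incident squares are affected. A deleted vertex causes no issue since it is simply absent; a surviving vertex $(v, w)$ with, say, $v \in B_i$ has all its incident squares retracting toward edges through $(v, \cdot)$ or toward $(v,w)$ itself, so nothing is pulled away from it inconsistently. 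Once these checks are in place, the deformation retraction onto the graph $Z$ follows, proving the proposition.
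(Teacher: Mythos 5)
Your proposal is correct and follows essentially the same route as the paper: push each square away from its unique deleted corner $(a,b')$ (the uniqueness being exactly what $2$-fullness gives you), landing on the subcomplex of surviving edges, which is the graph the paper calls $\mathcal{G}_{ij}=(\Gamma_i\times A_j)\cup(B_i\times\Gamma_j)$. The paper phrases this as a radial push-out from each removed vertex and omits the gluing verification, which you spell out; the content is the same.
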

\begin{proof}
We start at a removed vertex and push out radially in adjacent squares. This removes the interior of each cube adjacent to a vertex in $A_i\times B_j$. As every edge in a 2-full graph has one endpoint in $A$ and one in $B$, all 2-cells are retracted by this process, leaving us with a graph $\mathcal{G}_{ij} := (\Gamma_i\times A_j)\cup (B_i\times\Gamma_j)\subset \Gamma_i\times\Gamma_j$.
\end{proof}

Let $q_{ij}>\max\{\mbox{deg}(v):v\in\Gamma_i\sqcup\Gamma_j\}$ be a prime number and $S_{q_{ij}}$ be the symmetric group on $q_{ij}$ elements. Let $\alpha\in S_{q_{ij}}$ be a $q_{ij}$-cycle and $\beta\in S_{q_{ij}}$ be an element such that $\beta\alpha\beta^{-1} = \alpha^l$, where $l$ is a generator of $\ZZ_{q_{ij}}^{\times}$.

A homomorphism $\pi_1(\mathcal{G}_{ij})\to S_{q_{ij}}$ can be defined by labelling each edge of $\mathcal{G}_{ij}$ with an element of $S_{q_{ij}}$, with the condition that if $e$ has label $g$, then $\bar{e}$ has label $g^{-1}$.

We start by labelling the edges of $\Gamma_i$ and $\Gamma_j$, then extend this to a labelling of $\mathcal{G}_{{ij}}$ in an obvious way. 

For each edge of $\Gamma_i$ we label with an element of $\{\alpha, \dots, \alpha^{q_{ij}}\}$, such that at each vertex the edges oriented towards it are labelled by different powers of $\alpha$. We similarly label the edges of $\Gamma_j$ with powers of $\beta$ such that at each vertex the edges oriented towards it are labelled by different powers of $\beta$. 

This defines a homomorphism $\rho_{ij}\colon\pi_1(\Gamma_i\times\Gamma_j\smallsetminus(A_i\times B_j))\to S_{q_{ij}}$. A loop of length 4 in the link of a removed vertex comes from a diagram as in Figure \ref{loopsoflength}, where the centre vertex is removed. This loop deformation retracts onto a loop of length 8 in $\mathcal{G}_{ij}$. This loop is labelled by a commutator of the form $[\beta^{-m}, \alpha^{-n}] = \beta^m\alpha^n\beta^{-m}\alpha^{-n} = \alpha^{n(l^m -1)}$. When $n<q_{ij}$ and $m<q_{ij}-1$, this is a non-trivial power of $\alpha$. These conditions will be satisfied by the choice of $q_{ij}$, hence this commutator is a $q_{ij}$-cycle. 

\begin{figure}\center
\def\svgwidth{200pt}
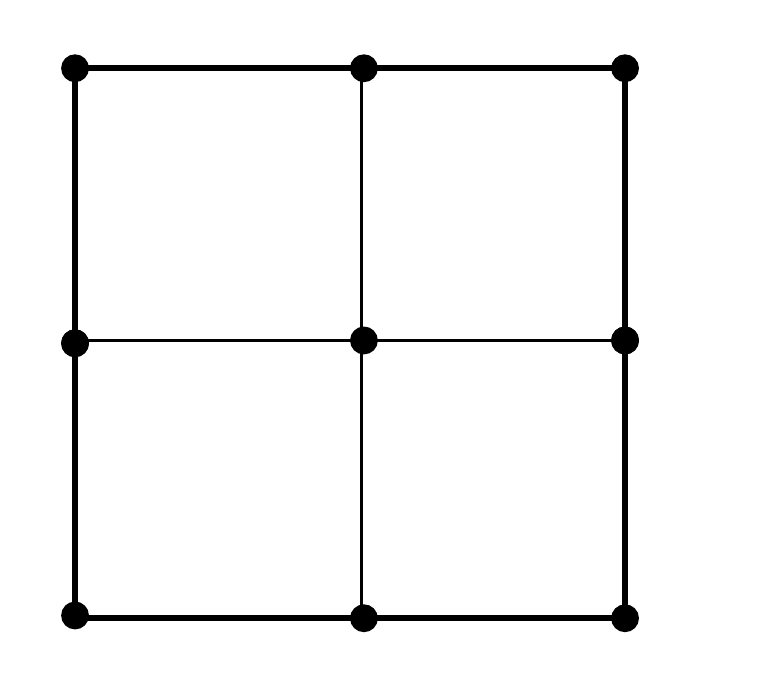
\caption[Loops of length 4.]{The loops $e_i$ and $e_i'$ are in $\Gamma_i$ and $e_j$ and $e_j'$ are in $\Gamma_j$. The vertex labelled $v$ is in $A_i\times B_j$.}
\label{loopsoflength}

\end{figure}

We now have three maps:
\begin{align*}
\rho_{23}\circ (p_1)_*\colon \pi_1(K\smallsetminus L)&\to S_{q_{23}},\\
\rho_{31}\circ (p_2)_*\colon \pi_1(K\smallsetminus L)&\to S_{q_{31}},\\
\rho_{12}\circ (p_3)_*\colon \pi_1(K\smallsetminus L)&\to S_{q_{12}}.
\end{align*}
Let $q = q_{12}q_{23}q_{31}$. We combine the above representations to get a representation $\rho\colon \pi_1(K\smallsetminus L)\to S_{q}$. If each representation as seen as acting on the vector space $\RR^{q_{ij}}$, then $\rho$ should be the tensor product of these representations. We then take the cover of $K\smallsetminus L$ corresponding to the stabiliser of $1$ in $S_q$ and complete to get the desired branched cover $X$. By Lemma \ref{npcbranch}, this cover will be non-positively curved.

We now consider links of vertices in $X$ and construct a Morse function.

The vertices of $X$ can be split into 2 types:
\begin{enumerate}
\item Vertices that do not map to $L$.
\item Vertices that map to $L$. 
\end{enumerate}

Given a vertex $v$ of type 1, let $w = (v_1, v_2, v_3)$ be the vertex in $K$ to which $v$ maps. Since $w$ is disjoint from the branching locus, a small neighbourhood lifts to $X$. The link of $v$ is therefore isomorphic to the link of $w$, that is, $$\lk(v, X) = \lk(v_1, \Gamma_1)\ast \lk(v_2, \Gamma_2)\ast \lk(v_3, \Gamma_3).$$

We now examine the link of a vertex $v$ of type 2. Let $w = (v_1, v_2, v_3)$ be the vertex in $K$ to which $v$ maps. We will do the case where $v_2\in A_2$ and $v_3\in B_3$; the other cases can be treated similarly.

Recall that $\lk(w, K) = \lk(v_1, \Gamma_1)\ast \lk(v_2, \Gamma_2)\ast \lk(v_3, \Gamma_3)$. Further recall that during the branching process we are removing the set corresponding to the vertices in $\lk(v_1, \Gamma_1)$. We have a map on fundamental groups $$\pi_1(\lk(w, K)\smallsetminus \lk(v_1, \Gamma_1))\to \pi_1(K\smallsetminus L).$$ We consider the image of $\pi_1(\lk(w, K)\smallsetminus \lk(v_1, \Gamma_1))$ under the maps ${p_{i}}_*$. 

$\lk(w, K)\smallsetminus \lk(v_1, \Gamma_1)$ is mapped by $p_1$ to the link of a removed vertex in 
$$\Gamma_2\times\Gamma_3\smallsetminus(A_2\times B_3);$$ 
the image is thus isomorphic to $\lk(v_2, \Gamma_2)\ast \lk(v_3, \Gamma_3)$. The space $(A\ast B)\smallsetminus B$ is homotopy equivalent to $A$; this homotopy equivalence comes from the projection 
$$(C_0(A)\times C_0(B))\smallsetminus (\{0\}\times C_0(B))\to C_0(A),$$ 
where $C_0(X)$ is the CAT(0) cone from Definition \ref{conedef}. The map $\lk(w, K)\smallsetminus \lk(v_1, \Gamma_1)\to \lk(v_2, \Gamma_2)\ast \lk(v_3, \Gamma_3)$ coming from the projection is a homotopy equivalence. Under the maps $p_2$ and $p_3$, $\lk(w, K)\smallsetminus \lk(v_1, \Gamma_1)$ is sent to a union of contractible sets. We are now reduced to considering the image of $\pi_1(\lk(w, K)\smallsetminus \lk(v_1, \Gamma_1))$ under the map $\rho_1$ to $S_{q_{23}}$. We picked the maps such that loops of length 4 in $\lk(v_2, \Gamma_2)\ast \lk(v_3, \Gamma_3)$ are sent to $q_{23}$-cycles under the homomorphism $\rho_1$.

We can cover the graph $\lk(v_2, \Gamma_2)\ast \lk(v_3, \Gamma_3)$ with a sequence $\mathcal{L}$ of loops of length 4, such that each loop has non-empty intersection with the union of the previous loops. Every loop of length 4 has connected preimage in the cover. The cover of each loop in $\mathcal{L}$ will have non-empty intersection with the union of previous loops. Thus the corresponding cover will be a connected graph with no loops of length 4, as any loop of length 4 has preimage a loop of length $>4$. We will denote this graph $\Lambda_v$. The link of $v$ is the join of $\Lambda_v$ with the discrete set $\lk(v_1, \Gamma_1)$.

\subsection{The Morse function and ascending and descending links}

We put an orientation on each edge of $\Gamma_i$; we orient the edges such that at each vertex there are at least two incoming edges and at least two outgoing edges. We give the circle $S^1$ a cell structure with 1 vertex and 1 edge, putting an orientation on this edge. Define a map $f_i\colon \Gamma_i\to S^1$ by mapping each edge to the edge of $S^1$ by the given orientation and mapping all vertices to the vertex of $S^1$. These assignments define a map
\begin{align*}
f\colon  \Gamma_1\times\Gamma_2\times\Gamma_3&\to S^1 = \RR/\ZZ,\\
f(x,y,z)& = f_1(x)+f_2(y)+f_3(z),
\end{align*}
which lifts to an $f_*$-equivariant Morse function $\tilde{f}\colon \tilde{K}\to\RR$.

We precompose with the branched covering map $b$ to get a map $h = f\circ b\colon X\to S^1$. Lift this to universal covers to get a Morse function $\tilde{h}\colon \tilde{X}\to\RR$, which is $h_*$-equivariant. We will examine the ascending and descending links of $\tilde{h}$. We concentrate on the case of the ascending link; the arguments are the same for the descending link. 

Note that $\tilde{h} = \tilde{f}\circ\tilde{b}$ where $\tilde{b}\colon \tilde{X}\to\tilde{K}$ is the lift of $b$ to universal covers. We also consider the induced map on links $\tilde{b}_{\lk(v)}\colon \lk(v, \tilde{X})\to \lk(\tilde{b}(v), \tilde{K})$. Given a vertex $v$ of $\tilde{X}$ the ascending link is the preimage of the ascending link of $\tilde{b}(v)$. The ascending link of $\tilde{b}(v)$ is $U_1\ast U_2\ast U_3$ where $U_i$ is a discrete set whose size is the number of edges oriented away from $\tilde{b}(v)$ in $\tilde{\Gamma}_i$.

If $v$ is a type 1 vertex, then the ascending link of $v$ is isomorphic to the ascending link of $\tilde{b}(v)$, which is $U_1\ast U_2\ast U_3$. This join is simply connected but has non-zero second homology; indeed, as each $|U_i|>1$ it has the homotopy type of a wedge of $2$-spheres.

Let $v$ be a vertex of type 2; without loss of generality, assume that it lies on a lift of $\Gamma_1\times A_2\times B_3$. The ascending link of $v$ will be the preimage of $U_1\ast U_2\ast U_3$ in $U_1\ast\Lambda_v$. Since $U_i$ contains at least 2 points,  the graph $U_2\ast U_3$ is connected but not simply connected. We cover $U_2\ast U_3$ with loops of length 4 such that successive loops have non-empty intersection with the union of the previous loops. Since each of these loops has connected preimage in $\Lambda_v$, the intersection of a loop with the union of the previous loops will remain non-empty. Therefore, the cover $\mathcal{U}$ of $U_2\ast U_3$ in $\Lambda_v$ will be connected. The ascending link will be $\mathcal{U}\ast U_1$ which will be simply connected as the join of a connected set and a non-empty set. 

The kernel of $f_*$ is finitely presented by Theorem \ref{bbmorse}. The ascending link of a vertex which is of type 1 has the form $U_1\ast U_2\ast U_3$, which although simply connected has non-zero second homology. Applying Theorem \ref{notfp}, we see that the kernel will not be of type $FP_3$.

\subsection{Hyperbolicity of the branched cover}

To complete the proof of Theorem \ref{bradygen} we must prove that $\tilde{X}$ is hyperbolic.

Let $\tilde{L}$ be the preimage of $L$ in $\tilde{X}$. We will prove that the cube complex $\tilde{X}$ is hyperbolic by contradiction. By Theorem \ref{flatplane} it is enough to show that there are no isometrically embedded copies of $\EE^2$ in $\tilde{X}$. Suppose that $i\colon \EE^2\to\tilde{X}$ is such an embedding and let $F = i(\EE^2)$. First, we show that $i(\EE^2)$ has a transverse intersection point with $\tilde{L}$. Around such an intersection point we will see that the angle sum in $i(\EE^2)$ is forced to be $>2\pi$, contradicting the assumption that $i$ is an isometric embedding. 

\begin{figure}\center
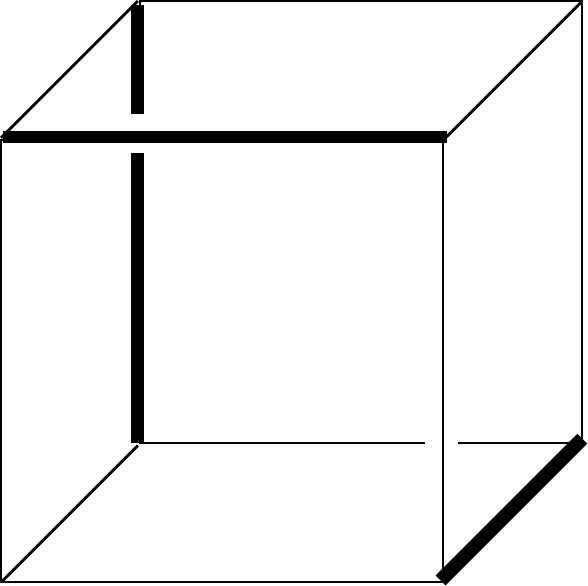
\caption{Intersection pattern of $\tilde{L}$ on a cube in $\tilde{X}$.}
\label{intpattern}
\end{figure}

Each 3-cube of $\tilde{X}$ intersects $\tilde{L}$ in 3 edges in the pattern depicted in Figure \ref{intpattern}. 

\begin{figure}\center
\def\svgwidth{200pt}
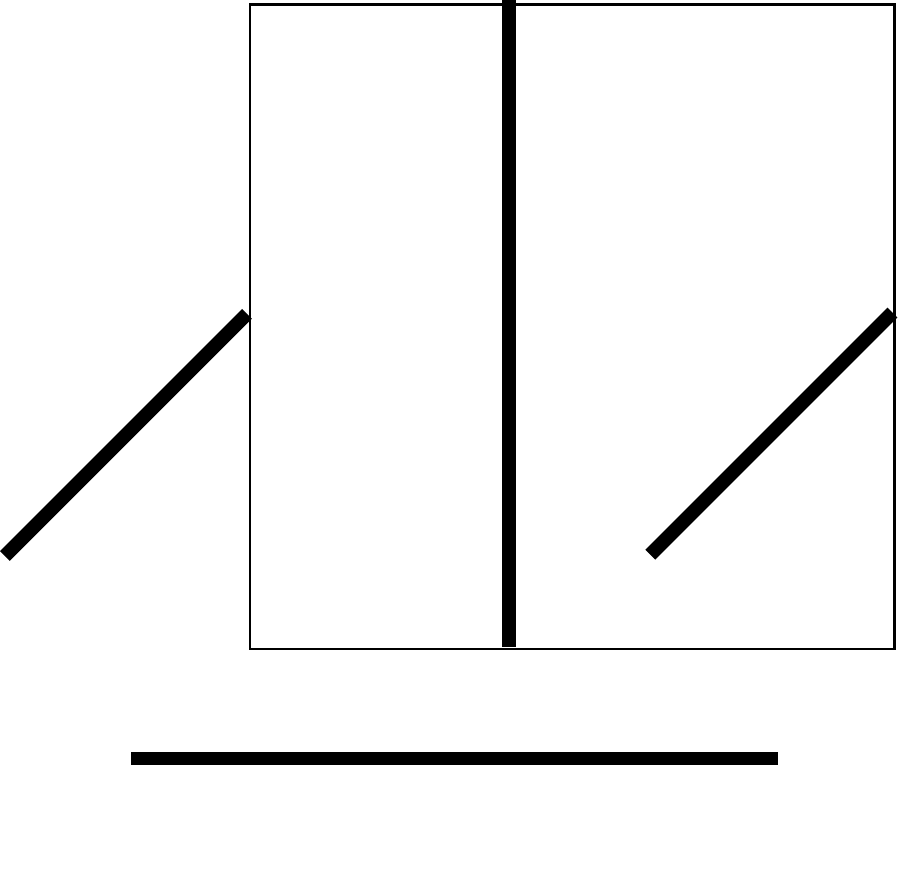
\caption{Cubes around a vertex not mapping to $L$.}
\label{type1vertex}
\end{figure}

Because $i$ is an isometric embedding, it cannot be contained in the 1-skeleton of $\tilde{X}$, so there is a 3-cube $c$ where the intersection is 2-dimensional. 

There are two cases to consider:
\begin{enumerate}
\item $F\cap\tilde{L}\cap c = \emptyset$
\item $F\cap\tilde{L}\cap c \neq \emptyset$
\end{enumerate}

In case 1, $F$ must have intersected $c$ in an edge adjacent to a vertex $v$ of $c$ which does not map to $L$. We can develop $\EE^2$ in a cubical neighbourhood of this vertex. The link of $v$ in this neighbourhood will produce part of an octahedron. We have depicted this in Figure \ref{type1vertex}. We can consider this cube as a subset of $\EE^3$, where $v$ sits at the origin and the edge with non-empty intersection is on one of the coordinate axes. Developing the plane into these cubes we see that we will intersect $L$ in one of the adjacent cubes from Figure \ref{type1vertex}.

We have now reduced to case 2. We have shown that there is a cube $c$ and an edge $e\subset\tilde{L}\cap c$ such that $F\cap e\neq \emptyset$. Either this intersection is a single point in which case we will see that it is a transverse intersection. If this is not the case, then the edge $e$ is entirely contained in $F$. In this case we can see from the intersection pattern in Figure \ref{intpattern} that $i(\EE^2)$ intersects one of the other edges of $\tilde{L}$ in $c$ in a single point. 

If the intersection point $x$ is in the interior of an edge, then it is a transverse intersection point. This can be checked by using a ball whose radius is less than the minimum distance to an endpoint of $e$.

In the case where the intersection point $x$ is the endpoint of an edge, we will prove that it is a transverse point of intersection by contradiction. 

If $x$ is not a transverse point of intersection, then there are points of $\EE^2\cap\tilde{L}$ arbitrarily close to $x$. Since $i$ is an isometric embedding, if two points of an edge are in $i(\EE^2)$, then the whole edge is contained in $i(\EE^2)$. Therefore, $i(\EE^2)$ will contain both a 2-dimensional cross section of the cube $c$ and an edge $e'$ of $\tilde{L}$ in another cube that is adjacent to $c$. Since the link of $x$ is the join of a graph and a discrete set and the vertex corresponding to $e'$ is contained in the discrete set, it follows that there is a cube $c'$ containing $e'$ such that $c$ and $c'$ share a 2-dimensional face. Consider the union of these two cubes as a subset of $\EE^3$. The plane defined by the polygon in $c$ does not contain the edge in $\tilde{L}$, so $i$ cannot have been an isometric embedding. 

We now have a transverse intersection point with $\tilde{L}$. To see that the flat plane is not isometrically embedded we split into 2 cases. 

Firstly, we will study the case where the transverse intersection point $x$ is in the interior of an edge $e$. The plane will intersect several cubes in various polygonal subsets each contributing an angle. Since we assumed that the plane was isometrically embedded, these angles must sum to $2\pi$. Two such polygons joined along an edge correspond to two cubes meeting along a face containing $e$. These polygons make a contribution of $\pi$ to get an angle sum of $2\pi$ we would therefore have to have exactly four cubes meeting around $e$. But the link of $e$ in $\tilde{X}$ contains no loops of length less than 6. 

If the transverse intersection point $x$ is the endpoint of the edge $e$ we proceed similarly. We will assume that the component of $\tilde{L}$ with the transverse intersection is a lift of $\Gamma_1$. Since there is a flat containing $x$ we see that there is a loop of length $2\pi$ in $\lk(x, \tilde{X})$. We can now take the CAT(0) cone $C_0(x)$ over $\lk(x, \tilde{X})$. Let $\Lambda_x$ be the subgraph of $\lk(x, \tilde{X})$ spanned by vertices not corresponding to $\tilde{L}$. Since $\lk(x, \tilde{X}) = \Lambda_x\ast \lk(x, \tilde{L})$ we see by Theorem \ref{joinisproduct} $C_0(x)$ is isometric to $C_0(\Lambda_x)\times C_0(\lk(x, \tilde{L}))$. The latter summand is several copies of $\RR_+$ joined at $0$. We can project this plane to $C_0(\Lambda_x)$. Two parallel geodesics have bounded distance under this projection and thus by Theorem \ref{flatstrip} they bound a flat strip. This gives a loop of length $2\pi$ in $\Lambda_x$. However, by choice of the branched covering, $\Lambda_x$ has no loops of length less than 6.

Our argument by contradiction is complete, and we have proved that there cannot be an isometrically embedded flat plane. Thus $\tilde{X}$ is a hyperbolic CAT(0) cube complex.

\subsection{Euler characteristics}

Choosing different graphs $\Gamma_i$ we have an infinite family of spaces, each of which has hyperbolic fundamental group. We will now calculate the Euler characteristic of these groups to show that this family contains an infinite subfamily of pairwise non-isomorphic hyperbolic groups. We would like to improve this result to show that the non-hyperbolic subgroups of interest are not commensurable; this work is ongoing. 

\begin{prop}\label{euler1}
Let $\Gamma_1$, $\Gamma_2$ and $\Gamma_3$ be 2-full graphs, let $V_i = A_i\sqcup B_i$ be the vertex set of $\Gamma_i$ and let $E_i$ be the edge set of $\Gamma_i$. We use the shorthand $|A_i| = a_i, |B_i| = b_i, |V_i| = v_i$ and $|E_i| = e_i$. Then the $q_{12}q_{23}q_{31}$-fold branched cover $X$ of $K = \Gamma_1\times\Gamma_2\times\Gamma_3$ has Euler characteristic
\begin{multline*}
q_{12}q_{23}q_{31}(v_1v_2v_3 + a_1b_2e_3 + b_1e_2a_3 + e_1a_2b_3 + e_1e_2v_3 + e_1v_2e_3 + v_1e_2e_3\\
 - v_1a_2b_3 - b_1v_2a_3 - a_1b_2v_3 - e_1v_1v_2 - v_1e_2v_3 - v_1v_2e_3 - e_1e_2e_3)\\
- q_{12}q_{31}a_2b_3(e_1 - v_1)
- q_{12}q_{23}a_3b_1(e_2 - v_2)
- q_{23}q_{31}a_1b_2(e_3 - v_3)
\end{multline*}

\end{prop}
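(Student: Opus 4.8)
# Proof Proposal for Proposition \ref{euler1}

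The plan is to compute $\chi(X)$ via the multiplicativity of Euler characteristic under the (branched) covering map $b\colon X \to K$, carefully accounting for the defect introduced by the branching locus $L$. The starting point is that $\chi$ is multiplicative for honest covers, so $\chi(\overline{K\smallsetminus L}) = q\cdot\chi(K\smallsetminus L)$ where $q = q_{12}q_{23}q_{31}$ is the degree of the cover of $K\smallsetminus L$. Since $X$ is obtained from $\overline{K\smallsetminus L}$ by metric completion, the points added back in form the preimage of $L$, and I will express $\chi(X) = \chi(\overline{K\smallsetminus L}) + \chi(\widehat{L})$, where $\widehat{L}$ denotes the preimage of $L$ in $X$ (so $X = \overline{K\smallsetminus L} \sqcup \widehat{L}$ as a set, with $\chi$ additive over this CW-decomposition). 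In parallel, $\chi(K) = \chi(K\smallsetminus L) + \chi(L)$. Combining, $\chi(X) = q\,\chi(K) - q\,\chi(L) + \chi(\widehat{L})$.

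First I would compute $\chi(K) = \chi(\Gamma_1)\chi(\Gamma_2)\chi(\Gamma_3) = (v_1-e_1)(v_2-e_2)(v_3-e_3)$ by multiplicativity over products, expand this product, and check that the expanded form matches the ``bulk'' terms in the claimed formula (the coefficient-$q$ polynomial). Next I would compute $\chi(L)$: since $L$ is a disjoint union of three pieces of the form $\Gamma_1\times A_2\times B_3$ etc., and $A_i, B_i$ are discrete, $\chi(L) = a_2 b_3\,\chi(\Gamma_1) + b_1 a_3\,\chi(\Gamma_2) + a_1 b_2\,\chi(\Gamma_3) = a_2b_3(v_1-e_1) + b_1a_3(v_2-e_2) + a_1b_2(v_3-e_3)$. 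The subtle part is $\chi(\widehat{L})$: I must determine exactly how many sheets of the cover come together over each stratum of $L$. Over a vertex of $L$, the local picture is a cone on a discrete set joined to the connected graph $\Lambda_v$ described in section \ref{bradygen2}; the branching is ramified, so the number of preimage vertices is $q$ divided by the relevant cycle lengths — concretely, over a vertex of the $\Gamma_1\times A_2\times B_3$ stratum the local monodromy is a $q_{23}$-cycle, so there are $q/q_{23} = q_{12}q_{31}$ preimage vertices, while over an edge of $\Gamma_1$ (a $1$-cube in that stratum) the count is the same. This yields $\chi(\widehat{L}) = q_{12}q_{31}\,a_2b_3(v_1-e_1) + q_{12}q_{23}\,b_1a_3(v_2-e_2) + q_{23}q_{31}\,a_1b_2(v_3-e_3)$.

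Assembling the pieces: $\chi(X) = q[(v_1-e_1)(v_2-e_2)(v_3-e_3)] - q[a_2b_3(v_1-e_1) + b_1a_3(v_2-e_2) + a_1b_2(v_3-e_3)] + [q_{12}q_{31}a_2b_3(v_1-e_1) + q_{12}q_{23}b_1a_3(v_2-e_2) + q_{23}q_{31}a_1b_2(v_3-e_3)]$. Expanding the triple product and collecting terms should reproduce the first two lines of the displayed formula; the $-q\,\chi(L)$ term combines with the last term to give the three correction terms $-q_{12}q_{31}a_2b_3(e_1-v_1) - \cdots$ after noting $-q(v_i-e_i) + q_{ij}q_{ki}(v_i-e_i) = -q(e_i - v_i)\cdot(\text{something})$; I would just verify the arithmetic directly. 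The main obstacle I anticipate is pinning down the ramification counts in $\chi(\widehat{L})$ correctly — one must be sure that the cover of $K\smallsetminus L$ is genuinely totally ramified (single cycle, no splitting into shorter cycles) along each stratum, which follows from the fact established in section \ref{bradygen2} that the monodromy around a loop of length $4$ encircling a removed vertex is a full $q_{ij}$-cycle, hence the stabilizer-of-$1$ cover restricted to a meridian disk is connected. Checking that this local analysis is uniform over all vertices and edges of each stratum, and that higher-dimensional cubes of $L$ contribute nothing new (since $L$ is a graph times a discrete set, it has no cells above dimension $1$ in the graph factor), completes the argument.
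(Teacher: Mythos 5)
Your proof is correct and follows essentially the same route as the paper, just aggregated: the paper tallies $i$-cells dimension by dimension (cells away from $L$ lift in $q_{12}q_{23}q_{31}$ copies; cells in $L$ are counted separately via the fiber count), whereas you package the same tally as $\chi(X)=q\,\chi(K\smallsetminus L)+\chi(\widehat L)$ with $\chi(K\smallsetminus L)=\chi(K)-\chi(L)$. The indispensable input in both versions is the same: the preimage of the stratum $\Gamma_1\times A_2\times B_3$ is a $q_{12}q_{31}$-fold cover of $\Gamma_1$, which you correctly justify by observing that the meridian monodromy is supported in (and is a full $q_{23}$-cycle on) the $S_{q_{23}}$ factor of the product representation $\rho$, so the fiber over $L$ decomposes into $q/q_{23}=q_{12}q_{31}$ orbits.

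One remark worth making: carrying out the expansion of $q(v_1-e_1)(v_2-e_2)(v_3-e_3)-q\,\chi(L)$ carefully, as you propose, would reveal a small typographical error in the statement (and in the paper's own proof): the term $-e_1v_1v_2$ in the displayed formula should read $-e_1v_2v_3$, since the number of $1$-cells of $K=\Gamma_1\times\Gamma_2\times\Gamma_3$ is $e_1v_2v_3+v_1e_2v_3+v_1v_2e_3$. Your approach makes this evident because the bulk term is forced to be exactly $q[(v_1-e_1)(v_2-e_2)(v_3-e_3)-\chi(L)]$, whose expansion is unambiguous.
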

\begin{proof}
To prove this we count the number of cells of each dimension in $X$. We do this by examining the cells in the cover of $K\smallsetminus L$, accounting for how many cells we add when we complete to obtain $X$. 

There are no 2 or 3-cells in $L$, therefore each such cell in $K$ lifts to the cover. Thus we have $$q_{12}q_{23}q_{31}(e_1e_2v_3 + e_1v_2e_3 + v_1e_2e_3)$$ 2-cells and $$q_{12}q_{23}q_{31}(e_1e_2e_3)$$ 3-cells in $X$. 

We are removing the $$a_1b_2e_3 + b_1e_2a_3 + e_1a_2b_3$$ 1-cells of $L$ from $K$, so in the cover of $K\smallsetminus L$ we have $$q_{12}q_{23}q_{31}(e_1v_1v_2 + v_1e_2v_3 + v_1v_2e_3 - a_1b_2e_3 - b_1e_2a_3 - e_1a_2b_3)$$ 1-cells before completing. 

Similarly, we are removing the $$v_1a_2b_3 + b_1v_2a_3 + a_1b_2v_3$$ 0-cells of $L$ from $K$, so in the cover of $K\smallsetminus L$ we have $$q_{12}q_{23}q_{31}(v_1v_2v_3 - v_1a_2b_3 - b_1v_2a_3 - a_1b_2v_3)$$ 0-cells. 

We now account for the cells added on completing the cover. We  look at the completion of a component of $\Gamma_1$. The link of a lift of an edge $e$ in $\Gamma_1$ will be a $q_{23}$ fold cover of the link of $e$. The lift of $\Gamma_1$ will thus be a $q_{12}q_{31}$-fold cover of $\Gamma_1$. This cover will have $q_{12}q_{31}(v_1a_2b_3)$ 0-cells and $q_{12}q_{31}(e_1a_2b_3)$ 1-cells. Repeating this count for the graphs $\Gamma_2$ and $\Gamma_3$ gives the desired result. 
\end{proof}

A simple example of a 2-full graph $\Gamma$ satisfying the hypothesis of Theorem \ref{bradygen} is obtained by taking a prime $p>3$ and $p-1$ copies of $[0,1]$, then identifying all the 0 endpoint and all the 1 endpoints. This is a ``cage graph'' with 2 vertices and $p-1$ edges. The complex $X$ constructed as above by taking $\Gamma_1 = \Gamma_2 = \Gamma_3 = \Gamma$ and $q_{12} = q_{13} = q_{23} = p$ has Euler characteristic $p^2(9+15p - 24p^2+9p^3-p^4)$. Thus we obtain an infinite family of hyperbolic groups with finitely presented subgroups not of type $F_3$.

\section{Another family of groups which are finitely presented and not of type \texorpdfstring{$F_3$}{F3}}

The second method we will present extends Lodha's work \cite{lodha_finiteness_2017}.

\begin{definition}\label{sizeabledef}
A simplicial graph $\Gamma$ is {\em sizeable} if it satisfies the following conditions:
\begin{itemize}
\item $\Gamma$ is bipartite on two sets $A$ and $B$,
\item $\Gamma$ contains no loops of length 4, 
\item there exist partitions $A = A^+\sqcup A^-$ and $B = B^+\sqcup B^-$, such that $\Gamma(A^s\sqcup B^t)$ is connected for all $s,t\in\{-,+\}$.
\end{itemize}
\end{definition}

A sizeable graph with 44 vertices was constructed in \cite{lodha_finiteness_2017}. One can construct many examples using the procedure for hyperbolising 2-dimensional right angled Artin groups detailed in \cite{kropholler_almost_2017}. In \cite{kropholler_thesis}, an example with 37 vertices is given. It is now natural to ask for the minimal number of vertices and edges in a sizeable graph. Theorem \ref{thm:sizeable24} of the appendix shows that the minimal number of vertices of a sizeable graph is 24. Furthermore such a graph is constructed. 

The following remark differentiates our complexes from those constructed in \cite{lodha_finiteness_2017, brady_branched_1999}.

\begin{rem}\label{difftimes}
	The graph depicted in Figure \ref{fig:sizeable24} has all four subgraphs contractible. When we create the cube complex $X$ from Theorem \ref{lodhagen}, the ascending and descending links come in two types: some are contractible, the others are the join of three discrete sets each with 2 points.
\end{rem}

The example of Brady \cite{brady_branched_1999} can be seen as the special case of Theorem \ref{bradygen} based on a cage graph with 4 edges, where in the proof one takes $q_{12} = q_{23} = q_{31} = 5$. The example of Lodha \cite{lodha_finiteness_2017} is a special case of Theorem \ref{lodhagen}, using the sizeable graph defined in \cite{lodha_finiteness_2017}. In both cases the ascending and descending links come in two varieties: each is either a join of 3 discrete sets, each with 2 points, or the suspension of a loop of length at least 20. Thus we can see that the ascending and descending links in the construction can be quite different from the previous constructions. 

We now move on to proving the main theorem of this section.

\begin{customthm}{B}\label{lodhagen}
For $i=1,2$ and $3$, let $\Gamma_i$ be a sizeable graph with vertex set $A_i\sqcup B_i$. Let $K_{ij}$ be the complete bipartite graph on $A_i$ and $B_j$. Let $X$ be the full cubical subcomplex of $K_{13}\times K_{21}\times K_{32}$ spanned by  vertices $(v_1, v_2, v_3)\in K_{13}\times K_{21}\times K_{32}$ which satisfy one of the following conditions, 
\begin{itemize}
\item $v_i\in A_i$ for all $i$, 
\item $v_i\in B_{i-1}$ for all $i$, 
\item $v_1\in A_1, v_2\in B_1$ and $[v_1, v_2]$ is an edge of $\Gamma_1$, 
\item $v_2\in A_2, v_3\in B_2$ and $[v_2, v_3]$ is an edge of $\Gamma_2$, 
\item $v_3\in A_3, v_1\in B_3$ and $[v_3, v_1]$ is an edge of $\Gamma_3$.
\end{itemize}
Then $\pi_1(X)$ is hyperbolic and contains a finitely presented subgroup that is not hyperbolic.
\end{customthm}

We will give the proof of this theorem in several stages.
\begin{enumerate}
\item Terminology relating to the complex $X$. 
\item Define a Morse function $\tilde{X}\to\RR$, where $\tilde{X}$ is the universal cover of $X$. 
\item Examine the ascending and descending links of the Morse function to see that $\pi_1(X)$ has a finitely presented subgroup which is not of type $F_3$.
\item Prove that $\tilde{X}$ is hyperbolic. 
\end{enumerate}

\subsection{The complex \texorpdfstring{$X$}{X}}

The vertices of $X$ are defined in Theorem \ref{lodhagen}. We say that a vertex is of type 1 if it satisfies either of the first two conditions and of type 2 otherwise. We include a cube in $X$ if all the vertices defining it are in $X$.

Put an orientation on each edge of $K_{ij}$ by orienting it from $A_i^s$ to $B_j^t$ if $s=t$ and orienting it towards $A^s_i$ otherwise. Give $S^1$ a cell structure with one vertex and one oriented edge. We define a map $h_i\colon K_{ij}\to S^1$ by mapping open edges homeomorphically to the open edge of $S^1$ respecting orientation, and we extend this to a map 
\begin{align*}
h&: K_{13}\times K_{21}\times K_{32}\to S^1,\\
h(x,y,z)&:=  h_1(x)+h_2(y)+h_3(z).
\end{align*}
Restricting to $X$, we get a map $f\colon X\to S^1$.  Lifting to universal covers, we get a Morse function $\tilde{f}\colon \tilde{X}\to \RR$ which is $f_*$-equivariant.

\subsection{The ascending and descending links of \texorpdfstring{$f$}{f}}

We first examine the links of vertices and prove that $X$ is a non-positively curved cube complex. We will then move on to looking at the ascending and descending links. 

\begin{notation}
For a vertex $v$ in $\Gamma_i$, let $N_{v}$ be the set of vertices adjacent to $v$ in $\Gamma_i$. 
\end{notation}

\subsubsection{A type 1 vertex}

Let $v = (v_1,v_2,v_3)$ be a vertex of type 1. We will consider the case where $v_i\in A_i$, the other case (where $v_i\in B_{i-1}$) being similar. Consider adjacent vertices in $K_{13}\times K_{21}\times K_{32}$ that are of the form $(v_1',v_2,v_3), (v_1,v_2',v_3)$ and $(v_1,v_2,v_3')$ where $v_i'\in B_{i-1}$. These vertices are in $X$ under the following conditions:

\begin{itemize}
\item $(v_1',v_2,v_3)$ is in the complex if $[v_1', v_3]$ is an edge of $\Gamma_3$. 
\item $(v_1,v_2',v_3)$ is in the complex if $[v_2', v_1]$ is an edge of $\Gamma_1$. 
\item $(v_1,v_2,v_3')$ is in the complex if $[v_3', v_2]$ is an edge of $\Gamma_2$. 
\end{itemize}
Thus $Lk(v, X)^{(0)} = N_{v_1}\sqcup N_{v_2}\sqcup N_{v_3}$.

We now look at which edges will be in $Lk(v, X)$. There is an edge between the vertices corresponding to $(v_1,v_2,v_3')$ and $(v_1,v_2',v_3)$ if the vertex $(v_1,v_2',v_3')$ is in the complex, since this will mean that the edges $[(v_1,v_2,v_3), (v_1,v_2',v_3)]$ and $[(v_1,v_2,v_3), (v_1,v_2,v_3')]$ are adjacent on a square. The vertex $(v_1,v_2',v_3')$ is in the complex if $[v_1, v_2']$ is an edge of $\Gamma_1$ which is the case above. Thus $Lk(v, X)^{(1)} = (N_{v_1}\ast N_{v_2}\ast N_{v_3})^{(1)}$. To understand the 2-skeleton we check which cubes are in $X$. This corresponds to checking vertices of the form $(v_1',v_2',v_3')$: in fact $v_i'\in B_{i-1}$ always, so this vertex is always in the complex. We conclude that $Lk(v, X) = N_{v_1}\ast N_{v_2}\ast N_{v_3}$.

We now examine the ascending and descending links of $v$. We will examine the case of the ascending link, the descending link being similar.

$Lk_{\uparrow}(v, X)$ is the full subcomplex of $Lk(v, X)$ corresponding to edges in the graphs $K_{ij}$ oriented away from $A_i$. This is the join of three sets $Q_{v_1},Q_{v_2}$ and $Q_{v_3}$ where $Q_{v_i}\subset N_{v_i}$ is the subset consisting of those edges in $K_{ij}$ oriented away from $A_i$. 

The ascending and descending links are all simply connected since they are the joins of three discrete sets. For a vertex $v_i\in \Gamma_i$, let $O_{v_i}\subset N_{v_i}$ be those edges oriented towards $v_i$. There is a vertex where at least one of $Q_{v_i}$ and $O_{v_i}$ contains at least 2 points. Indeed, if this were not the case then every vertex would have exactly one edge oriented towards it and one away, and since the subgraphs $\Gamma(A^s\sqcup B^t)$ are connected they would have to be segments meaning that $\Gamma$ is a copy of $S^0\ast S^0$ which contains a loop of length 4. Thus there is a vertex $v$ such that at least one of $Lk_{\uparrow}(v, X)$ or $Lk_{\downarrow}(v, X)$ has non-zero second homology. 

\subsubsection{A type 2 vertex} 

Now examine the link of a type 2 vertex. We will look at a vertex $v = (v_1,v_2,v_3)$, where $v_1\in A_1, v_2\in B_1, v_3\in A_3$ and $[v_1, v_2]$ is an edge of $\Gamma_1$. All other cases are similar.

We start by considering adjacent vertices, which give the 0-skeleton of $Lk(v, X)$. We can see that the possible adjacent vertices are of the form $(v_1',v_2,v_3), (v_1,v_2',v_3)$ or $(v_1,v_2,v_3')$. All vertices of the form $(v_1,v_2',v_3)$ are in $X$ as these are type 1 vertices. All vertices of the form $(v_1,v_2,v_3')$ are also in $X$ as $[v_1, v_2]$ is an edge of $\Gamma_1$. Vertices of the form $(v_1',v_2,v_3)$ are in the complex if $[v_1', v_3]$ is an edge of $\Gamma_3$. Therefore, $Lk(v, X)^{(0)} = A_2\sqcup B_2\sqcup N_{v_3}$.

We now consider which squares are in the complex $X$; these will enable us to compute $Lk(v, X)^{(1)}.$ We will do the three cases individually. 

\begin{enumerate}

\item There is an edge between vertices corresponding to $(v_1,v_2',v_3)$ and $(v_1,v_2,v_3')$ if the vertex $(v_1,v_2',v_3')$ is in the cube complex. Here, $v_1\in A_1, v_2'\in A_2, v_3'\in B_2$, which means this vertex is in the cube complex if $[v_2', v_3']$ is an edge of $\Gamma_2$. 

\item There is an edge between vertices corresponding to $(v_1',v_2,v_3)$ and $(v_1,v_2,v_3')$ if the vertex $(v_1',v_2,v_3')$ is in the cube complex. Here, $v_1'\in B_3, v_2\in B_1, v_3'\in B_2$, so this is a type 1 vertex and is always in the cube complex. 

\item Similarly, there is an edge between vertices corresponding to $(v_1',v_2,v_3)$ and $(v_1,v_2',v_3)$ if the vertex $(v_1',v_2',v_3)$ is in the complex. Now, $v_1'\in B_3, v_2'\in A_2, v_3\in A_3$. So the vertex $(v_1',v_2',v_3)$ is in the complex if $[v_1', v_3]$ is an edge of $\Gamma_3$, which is the case if $(v_1', v_2, v_3)$ is in the complex. 
\end{enumerate}

Putting all of this together, we can see that the 1-skeleton of $Lk(v, X)$ is the 1-skeleton of $\Gamma_2\ast N_{v_3}$.

To see the complex is flag we must consider the cubes in $X$. Three vertices $(v_1',v_2,v_3), (v_1,v_2',v_3), (v_1,v_2,v_3')$ which are pairwise adjacent in the link span a triangle if $(v_1',v_2',v_3')$ is in $X$. We can see that $v_1'\in B_3, v_2'\in A_2, v_3'\in B_2$ and this is in the complex as long as $[v_2', v_3']$ is an edge of $\Gamma_2$, which is always the case, as  $(v_1,v_2',v_3), (v_1,v_2,v_3')$ are adjacent in the link. This shows that $Lk(v, X) = \Gamma_2\ast N_{v_3}$. 

The ascending link of $v$ is the full subcomplex spanned by those edges which are oriented away from $v$ this is the join of $Q_{v_3}$ with the full subgraph of $\Gamma_2$ spanned by $A^s\cup B^t $ for some $s,t\in\{-, +\}$. This is simply connected since $\Gamma_2$ is sizeable. For the convenience of the reader we have depicted the link of each vertex in Figure \ref{ascdesclinks}.

\begin{figure}\center
	\centering
\def\svgwidth{350pt}
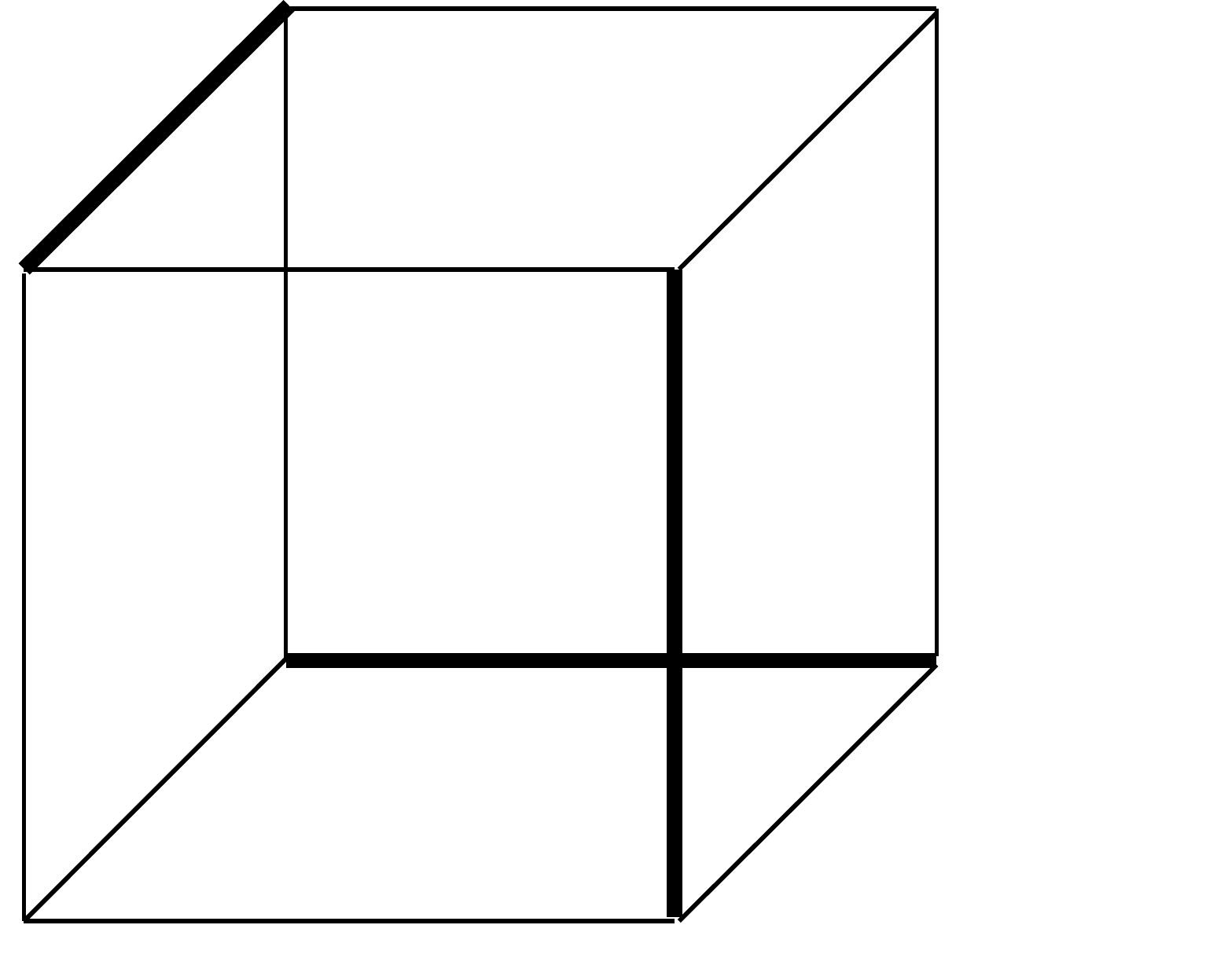
\caption[The link of each edge and vertex in $X$.]{The link of each vertex and each edge of $X$. Each $v_i\in A_i$ and $v_i'\in B_{i-1}$. The bold edges are $\Gamma$-edges.}
\label{ascdesclinks}
\end{figure}

\subsection{Proof that \texorpdfstring{$\tilde{X}$}{the universal cover of X} is hyperbolic}

To prove that $\tilde{X}$ is hyperbolic we use a similar argument to that of Section \ref{bradygen2}.

Recall that a subset $D$ of $X$ intersects $\EE^2$ {transversally} at a point $p$ if there is an $\epsilon>0$ such that $N_{\epsilon}(p)\cap D\cap \EE^2 = \{p\}$.

In each 3-cube there are six type 2 vertices and two type 1 vertices. Joining vertices of type 2, there are two types of edge, which can be determined by their link, which is either $\Gamma_i$ or a complete bipartite graph. This is shown in Figure \ref{ascdesclinks}. We will be concerned with those edges which have link $\Gamma_i$, which we will refer to as {\em $\Gamma$-edges}. Let $C$ be the union of all $\Gamma$-edges. In a 3-cube the $\Gamma$-edges have the same arrangement as the edges of $\tilde{L}$ from Figure \ref{intpattern}. We will prove that there cannot be an isometrically embedded flat plane in $\tilde{X}$, by arguing that any flat plane would have a transverse intersection with $C$ and that around such a transverse point the angle sum must be at least $3\pi$ -- a contradiction.

Assume that $i\colon \EE^2\to \tilde{X}$ is an isometric embedding. It is not contained in the 1-skeleton, so there is a cube $c$ whose intersection with $i(\EE^2)$ is 2-dimensional. If $i(\EE^2)\cap c\cap C = \emptyset$, then we have intersected $c$ in a neighbourhood of a type 1 vertex. We can develop this plane into cubes meeting at this type 1 vertex. If we develop the flat plane we see that it will intersect a $\Gamma$-edge in one of these cubes. This situation is shown in Figure \ref{type1vertex} where the bold edges are $\Gamma$-edges.

We have reduced to the case $i(\EE^2)\cap c\cap C \neq \emptyset$. From the intersection pattern pictured in Figure \ref{intpattern} the plane must intersect a $\Gamma$-edge in 1 point. 

If the intersection point is in the interior of an edge, then we can see that it is a transverse intersection by taking a ball of radius less than the distance to either endpoint $x$. 

If the intersection point with the $\Gamma$-edge is a vertex $v$, then we must prove that it intersects all $\Gamma$-edges at this vertex transversally. Since $i$ was an isometric embedding, if we do not intersect all $\Gamma$-edges at $v$ transversally, then there is a $\Gamma$-edge $e'$ entirely contained in $i(\EE^2)$. However, there is a cube $c'$ sharing a face with $c$ in which this edge $e'$ is contained. Thus $i(\EE^2)$ will contain a polygonal subset in $c$ and a $\Gamma$-edge $e'$ in $c'$. Considering $c\cup c'$ as a subset of $\EE^3$ we can see that the $\Gamma$-edge is not contained in the plane defined by the polygon in $c$. Therefore, the plane could not have been isometrically embedded. 

We conclude that there is a transverse intersection point with $C$. Examining the angle sum around such a point will give the desired result. 

Firstly, we will study the case where the intersection is in the interior of an edge $e$. In the link of such an intersection point we will see a loop of length $2\pi$. The plane will intersect several cubes in various polygonal subsets, each contributing an angle to this loop. These angles must sum to $2\pi$. A pair of such polygons joined along an edge corresponds to a pair of cubes which share a face meeting along $e$; the angle contribution from these two polygons is $\pi$. To get an angle sum of $2\pi$ we must therefore have four cubes meeting around $e$. But the link of $e$ in $X$ is $\Gamma_i$ which contains no cycles of length less than 6 -- a contradiction.

If the transverse intersection point $x$ is the endpoint of the edge $e$ we proceed similarly. We will assume that the link of $e$ is $\Gamma_1$ and $x$ is a vertex of the form $(b_3, a_2, a_3)$. Since there is a flat containing $x$ we see that there is a loop of length $2\pi$ in $\lk(x, \tilde{X})$. We can now take the CAT(0) cone $C_0(x)$ over $\lk(x, \tilde{X})$. Since $\lk(x, \tilde{X}) = \Gamma_1\ast \lk(a_2, \Gamma_2)$ we see by Theorem \ref{joinisproduct} $C_0(x)$ is isometric to $C_0(\Gamma_1)\times C_0(\lk(a_2, \Gamma_2))$. The latter summand is several copies of $\RR_+$ joined at $0$. We can project this plane to $C_0(\Gamma_1)$. Two parallel geodesics have bounded distance under this projection and thus by Theorem \ref{flatstrip} they bound a flat strip. This gives a loop of length $2\pi$ in $\Gamma_1$. A loop of length $2\pi$ corresponds to a circuit of length 4. But, once again, $\Gamma_1$ has no loops of length less than $6$.

Therefore, there are no isometrically embedded flat planes in $\tilde{X}$ and $\pi_1(X)$ is hyperbolic.

\subsection{Euler characteristics}

At this stage in our consideration of these examples, we again have an infinite family of CAT(0) spaces, each having hyperbolic fundamental group. We prove that there are infinitely many non-isomorphic groups in this family by calculating the Euler characteristic of the spaces in terms of the sizeable graphs $\Gamma_1$, $\Gamma_2$ and $\Gamma_3$. We would like to improve this result to show that the non-hyperbolic subgroups that we've constructed are not commensurable; this work is ongoing. 

\begin{prop}
Let $\Gamma_1$, $\Gamma_2$ and $\Gamma_3$ be sizeable graphs, such that $\Gamma_i$ has vertices divided into $A_i\sqcup B_i$ as in Definition \ref{sizeabledef}; also let $E_i$ be the edge set of $\Gamma_i$. We will denote $a_i = |A_i|, b_i = |B_i|$ and $e_i = |E_i|$. The Euler characteristic of the cube complex $X = X(\Gamma_1, \Gamma_2, \Gamma_3)$ is 
\begin{multline*}
a_1a_2a_3 + b_1b_2b_3 + a_1e_2 + a_3e_1 + a_2e_3 + b_2e_1 + b_3e_2 + b_1e_3 \\+ e_1e_2a_3 + e_1a_2e_3 + a_1e_2e_3 + e_1e_2b_3 + e_1b_2e_3 + b_1e_2e_3\\ - a_1a_2e_3 - a_1e_2a_3 - e_1a_2a_3 - b_1b_2e_3 - b_1e_2b_3 - e_1b_2b_3\\ - e_1e_2 - e_2e_3 - e_1e_3 - a_1e_2b_3 - e_1b_2a_3 - b_1a_2e_3 - e_1e_2e_3.
\end{multline*}
\end{prop}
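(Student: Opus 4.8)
The plan is to compute $\chi(X)=V-E+F-C$ directly, where $V,E,F,C$ count the vertices, edges, squares and $3$-cubes of $X$; note $X$ has dimension at most $3$, being a subcomplex of a product of three graphs. Since $X$ is the \emph{full} subcomplex of $K_{13}\times K_{21}\times K_{32}$ spanned by the prescribed vertex set, a cube of the product lies in $X$ precisely when all of its vertices do, so each count reduces to a finite check of the vertex conditions of Theorem \ref{lodhagen} against the $2^{\dim}$ vertices of a candidate cube. Throughout I would exploit the cyclic symmetry $1\mapsto 2\mapsto 3\mapsto 1$ of the construction, which simultaneously permutes the three factors and cycles the triples $(a_i,b_i,e_i)$; thus it suffices to count the cells that ``vary in coordinate $1$'' and then symmetrize.

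For $V$: the type $1$ vertices contribute $a_1a_2a_3+b_1b_2b_3$, and for each of the three (pairwise disjoint) families of type $2$ vertices the unconstrained coordinate ranges over a full vertex set of some $K_{ij}$, giving $e_1(a_3+b_2)+e_2(a_1+b_3)+e_3(a_2+b_1)$; these sum to the first two lines of the claimed polynomial. For $E$: an edge varies in exactly one coordinate, say the first, and is given by an edge $[a_1,b_3]$ of $K_{13}$ together with a choice of $v_2\in A_2\sqcup B_1$ and $v_3\in A_3\sqcup B_2$; running through the four cases for $(v_2,v_3)$ and recording when both endpoints $(a_1,v_2,v_3)$ and $(b_3,v_2,v_3)$ satisfy a defining condition yields $a_1a_2e_3+a_1b_3e_2+e_1e_3+e_1b_2b_3$, and cycling produces all twelve terms of $E$. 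The same method handles $F$ (a square varies in two coordinates; fixing coordinates $1,2$ one checks the four vertices $(x_1,x_2,v_3)$, obtains $e_1a_2e_3+e_1e_2b_3$, and cycles) and $C$ (a $3$-cube uses one edge of each $K_{ij}$; checking the six ``mixed'' vertices shows it lies in $X$ exactly when the three $K_{ij}$-edges project to edges $[a_i,b_i]\in E(\Gamma_i)$, so $C=e_1e_2e_3$). Assembling $V-E+F-C$ and collecting monomials then reproduces the stated expression.

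The only real difficulty is bookkeeping. In the edge and square counts one must remember that a coordinate ranging over an entire bipartite vertex set of $K_{ij}$ contributes a free multiplicative factor even when another coordinate is pinned to an edge of some $\Gamma_i$ — for instance, in the edge case $v_2\in B_1,\ v_3\in B_2$ the endpoint $(b_3,v_2,v_3)$ is automatically a type $1$ vertex, so only $[a_1,v_2]\in E(\Gamma_1)$ is constrained while $b_3$ stays free, yielding $e_1b_2b_3$ rather than $e_1b_2$. Useful internal checks are that each of the four tallies is invariant under the cyclic symmetry, and that at the end every monomial of the target formula is accounted for exactly once with the correct sign ($+$ from $V$ and $F$, $-$ from $E$ and $C$). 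I expect this careful case analysis to be the main, though essentially routine, obstacle; no input is needed beyond the vertex classification of Theorem \ref{lodhagen} and the full-subcomplex property.
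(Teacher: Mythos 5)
Your computation is correct but organized differently from the paper's. The paper counts cells via vertex links: for each of the eight vertex types it tallies the $0$-, $1$-, and $2$-cells in $\lk(v, X)$ (reusing the link descriptions established earlier in the section for the Morse-theory argument), sums over vertices of each type, and divides by $2$, $4$, $8$, since every $k$-cube is adjacent to $2^{k}$ vertices. You instead count each cell once, organized by dimension, using the full-subcomplex criterion together with the cyclic symmetry $1 \mapsto 2 \mapsto 3 \mapsto 1$ to reduce to a single representative case per dimension and then symmetrize. Your route is self-contained and more compact, whereas the paper's reuses work already in hand; both are routine combinatorial enumerations, and your tallies for $V$, $E$, $F$, and $C$ (for instance $V = a_1a_2a_3 + b_1b_2b_3 + e_1(a_3+b_2) + e_2(a_1+b_3) + e_3(a_2+b_1)$, $C = e_1e_2e_3$, and the symmetrized edge and square sums) do combine to give the stated polynomial.
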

\begin{proof}
We examine the links of vertices. Vertices, edges and 2-cells in $Lk(v,X)$ correspond to edges, squares and cubes adjacent to $v$ in $X$. Since each edge is adjacent to two vertices, each square is adjacent to four vertices and each cube is adjacent to eight vertices. The Euler characteristic can be computed by knowing the number of 0, 1 and 2-cells in the link of each vertex. 

We start by examining the 8 types of vertices and their links. A vertex $v$ in $X$ is of the form $(v_1, v_2, v_3)$; the 8 types correspond to $v_i$ being in $A_i$ or $B_{i-1}$. We will use $N_{v_i}$ to denote the neighbours of $v_i$ in the sizeable graph $\Gamma_i$ or $\Gamma_{i-1}$ and $n_i = N_{v_i}$.

\begin{table}[h!]
\begin{tabular}{ccccc}
($v_1,v_2,v_3$) & Link of vertex & 0-cells in link & 1-cells in link & 2-cells in link \\
\hline
$A_1\times A_2\times A_3$ & $N_{v_1}\ast N_{v_2}\ast N_{v_3}$ & $n_1 + n_2 + n_3$ & $n_1n_2 + n_1n_3 + n_2n_3$ & $n_1n_2n_3$\\
$A_1\times A_2\times B_2$ & $\Gamma_3\ast N_{v_1}$ & $a_3 + b_3 + n_1$ & $e_3 + n_1(a_3 + b_3)$ & $e_3n_1$\\
$A_1\times B_1\times A_3$ & $\Gamma_2\ast N_{v_3}$ & $a_2 + b_2 + n_3$ & $e_2 + n_3(a_2 + b_2)$ & $e_2n_3$\\
$A_1\times B_1\times B_2$ & $\Gamma_3\ast N_{v_3}$ & $a_3 + b_3 + n_3$ & $e_3 + n_3(a_3 + b_3)$ & $e_3n_3$\\
$B_3\times A_2\times A_3$ & $\Gamma_1\ast N_{v_2}$ & $a_1 + b_1 + n_2$ & $e_1 + n_2(a_1 + b_1)$ & $e_1n_2$\\
$B_3\times A_2\times B_2$ & $\Gamma_1\ast N_{v_1}$ & $a_1 + b_1 + n_1$ & $e_1 + n_1(a_1 + b_1)$ & $e_1n_1$\\
$B_3\times B_1\times A_3$ & $\Gamma_2\ast N_{v_2}$ & $a_2 + b_2 + n_2$ & $e_2 + n_2(a_2 + b_2)$ & $e_2n_2$\\
$B_3\times B_1\times B_2$ & $N_{v_1}\ast N_{v_2}\ast N_{v_3}$ & $n_1 + n_2 + n_3$ & $n_1n_2 + n_1n_3 + n_2n_3$ & $n_1n_2n_3$
\end{tabular}
\end{table}

Summing over the appropriate sets gives the desired result. We go through the first 2 cases in detail. 

The number of 0-cells in the links of all vertices which are in $A_1\times A_2\times A_3$ is
$$\sum_{v_1\in A_1}\sum_{v_2\in A_2}\sum_{v_3\in A_3}(n_1 + n_2 + n_3) = (a_1a_2\sum_{v_3\in A_3}n_3) + (a_1a_3\sum_{v_2\in A_2}n_2) + (a_2a_3\sum_{v_1\in A_1}n_1),$$
since each edge of $\Gamma_i$ has one endpoint in $A_i$ we can see that this is equal to $$a_1a_2e_3 + a_1e_2a_3 + e_1a_2a_3.$$

We claim that the number of 1-cells in the links of all vertices which are in $A_1\times A_2\times A_3$ is
$$\sum_{v_1\in A_1}\sum_{v_2\in A_2}\sum_{v_3\in A_3}(n_1n_2 + n_1n_3 + n_2n_3).$$
Using the fact that $\Gamma_i$ is bipartite, this is equal to $$e_1e_2a_3 + e_1a_2e_3 + a_1e_2e_3.$$

Finally, the number of 2-cells in the links of all vertices which are in $A_1\times A_2\times A_3$ is
$$\sum_{v_1\in A_1}\sum_{v_2\in A_2}\sum_{v_3\in A_3}n_1n_2n_3 = e_1e_2e_3.$$
We get similar results for the vertices in $B_3\times B_1\times B_2$.

For vertices in $A_1\times A_2\times B_2$ we get the following: the number of 0-cells across all such vertices is
$$\sum_{v_1\in A_1}\sum_{[v_2, v_3]\in E_2} (n_1 + a_3 + b_3) = a_1e_2a_3 + a_1e_2b_3 + e_1e_2;$$
the number of 1-cells across all such vertices is
$$\sum_{v_1\in A_1}\sum_{[v_2, v_3]\in E_2} (e_3 + n_1(a_3 + b_3)) = a_1e_2e_3 + e_1e_2a_3 + e_1e_2b_3;$$
the number of 2-cells across all such vertices is
$$\sum_{v_1\in A_1}\sum_{[v_2, v_3]\in E_2} e_3n_1 = e_1e_2e_3.$$
Repeating this for the other vertices gives the desired result.
\end{proof}

Using the construction detailed in \cite{kropholler_almost_2017} with $|A^+| = |B^+| = 2$ and $|A^-| = |B^-| = 1$ we can create sizeable graphs where $a_i = 4p = b_i$ and $e_i = 16p$ for all primes $p\geq 5$. The Euler characteristic of $X$ in these examples is $-64p^2(p+6)$ and as such we have an infinite family of non-isomorphic hyperbolic groups each having a finitely presented subgroup not of type $F_3$.

\section{Obstruction to hyperbolisation in dimension at least 4}

Our procedure for hyperbolising products of graphs is special to dimension three. In this section we show that in higher dimensions hyperbolisation via branched covers is not possible in the settings discussed. 

Recall that the process of taking a branched cover of $X$ over a branching locus $C$ is the following:

\begin{enumerate}
\item Take a finite covering $\overline{X\smallsetminus C}$ of $X\smallsetminus C$;
\item Lift the piecewise Euclidean metric locally and consider the induced path metric on $\overline{X\smallsetminus C}$;
\item Take the metric completion $\hat{X}$ of $\overline{X\smallsetminus C}$.
\end{enumerate}

\begin{thm}
If $X$ is a cube complex of dimension $n>3$, and $X$ has a cubical subcomplex isometric to $T^n$, then no non-positively curved branched cover of $X$ is hyperbolic.
\end{thm}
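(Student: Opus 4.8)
The plan is to show that the universal cover $\tilde Y$ of any non-positively curved branched cover $Y$ of $X$ contains an isometrically embedded copy of $\EE^2$; then $Y$ is not hyperbolic by Theorem \ref{flatplane}. The input is the flat $n$-torus $T^n\subset X$, which, being a cubical subcomplex isometric to the torus, we take to be locally convex; consequently $\pi_1(T^n)\hookrightarrow\pi_1(X)$ and the corresponding lift of $T^n$ to the universal cover $\tilde X$ is a convex copy of $\EE^n$, which I will still denote $T^n$. The crucial numerical point is that in the setting of this paper the branching locus $C$ is a (finite) graph — this is precisely what guarantees that $Y$ is non-positively curved, by Lemma \ref{npcbranch} — so $\dim C\le 1$.

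First I would lift the branched covering $Y\to X$ to a map $\tilde b\colon\tilde Y\to\tilde X$ of universal covers. Writing $\tilde C$ for the preimage of $C$ in $\tilde X$, the restriction $\tilde Y\smallsetminus\tilde b^{-1}(\tilde C)\to\tilde X\smallsetminus\tilde C$ is a covering map and a local isometry. Next I would produce a flat plane inside the convex Euclidean space $T^n=\EE^n\subset\tilde X$ that avoids $\tilde C$. The intersection $\tilde C\cap T^n$ is the preimage, under the covering $\EE^n\to T^n$, of the subcomplex $C\cap T^n$ of $T^n$; since $\dim C\le 1$ this is a subcomplex of $\EE^n$ of dimension at most $1$, with countably many cells. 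Because $n>3$, a generic affine $2$-plane in $\EE^n$ is disjoint from any fixed affine line, and more generally from any fixed cell of $\tilde C\cap T^n$ (the planes meeting such a cell form a nowhere dense subset of the finite-dimensional space of affine $2$-planes), so a countable union argument yields an affine $2$-plane $P\subset\EE^n$ with $P\cap\tilde C=\emptyset$.

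Having fixed such a $P$, which is convex in $\EE^n$ and hence in $\tilde X$ and is isometric to $\EE^2$, I would lift it: since $P$ is simply connected and disjoint from $\tilde C$, it lifts through the covering $\tilde Y\smallsetminus\tilde b^{-1}(\tilde C)\to\tilde X\smallsetminus\tilde C$ to a subset $\tilde P\subset\tilde Y$ with $\tilde b|_{\tilde P}$ an isometry onto $P$. It remains to see that $\tilde P$ is a flat in $\tilde Y$. It is closed in $\tilde Y$ (a component of $\tilde b^{-1}(P)$, the preimage of the closed set $P$) and it is locally convex (near each of its points $\tilde b$ is an isometry carrying $\tilde P$ onto the convex set $P$), so by completeness together with local convexity in the CAT(0) space $\tilde Y$ it is convex; therefore it is isometrically embedded and isometric to $\EE^2$. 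Applying Theorem \ref{flatplane} then finishes the proof.

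The main obstacle — and the exact place the hypothesis $n>3$ enters — is the dimension count of the second paragraph: for $n=3$ a $2$-plane and a line in $\EE^3$ generically meet, so the flat cannot be pushed off the branching locus, and indeed (consistently with the constructions of Section \ref{bradygen2}) there the branching genuinely kills the $3$-dimensional flats. A secondary technical point is the identification of $\tilde C\cap T^n$ with a subcomplex of dimension $\le 1$, which rests on $T^n$ being locally convex, so that the relevant lift of $T^n$ to $\tilde X$ really is its universal cover $\EE^n$; this is implicit in the hypothesis that $T^n$ is a cubical subcomplex isometric to the torus.
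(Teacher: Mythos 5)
Your plan has a genuine gap at the very first step: you assert that the branching locus $C$ is a finite graph, citing Lemma \ref{npcbranch}. But that lemma is a \emph{sufficient} condition for the branched cover to be non-positively curved, not a necessary one, and the theorem being proved does not assume $\dim C\le 1$. The definition of a branching locus (Definition \ref{locus}) only forces it to have codimension $\ge 2$, so in an $n$-dimensional complex the locus may have dimension up to $n-2$. Indeed, the bulk of the paper's proof — Lemmas \ref{justadd}, \ref{alln-2}, and \ref{nofree} — is devoted precisely to reducing a general branching locus to a disjoint union of sub-tori $T^{n-2}$, which the paper then shows lift to locally convex tori of dimension $n-2\ge 2$ in the branched cover; those tori furnish the $\ZZ^2$ subgroup obstructing hyperbolicity. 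Your route bypasses all of this by a dimension count that only works when the locus has dimension at most $n-3$.

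Concretely, the generic-position argument in your second paragraph fails for a codimension-$2$ branching locus: in $\EE^n$ the set of affine $2$-planes meeting a fixed affine $(n-2)$-plane is \emph{not} nowhere dense (since $2+(n-2)=n$ they meet transversally in a point), so you cannot push an affine $2$-plane in $T^n\cong\EE^n$ off $\tilde C$ by a Baire-category argument. Your approach would go through if one separately knew $\dim C\le n-3$, but that is an extra hypothesis not granted by the theorem. You would need to replace the avoidance argument by something like the paper's structural analysis of a codimension-$2$ locally convex subcomplex of the torus, or else construct the flat by taking a $2$-plane in $T^n$ parallel to (rather than disjoint from) the $(n-2)$-tori constituting the locus and then arguing it lifts despite passing through the branching set — which is essentially what the paper's covering-map proposition accomplishes.
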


Brady \cite{brady_branched_1999} states that Bestvina proved a similar result about $T^5$. 

We will start by looking at the case of an $n$-torus and this will quickly imply the result. 
Let $C$ be a branching locus for $X = T^n$, let $\overline{X\smallsetminus C}$ be the finite cover in the definition of branched cover and let $Y$ be the completion of this finite cover.

\begin{lem}\label{justadd}
Let $e$ be an open cube contained in $C$ of dim $\leq n-3$ and assume there does not exist a cube $e'$ such that $\bar{e}\subsetneq e'\subsetneq C$. Let $C' = C\smallsetminus e$. Then there is a cover of $T^n\smallsetminus C'$ whose completion is isomorphically isometric to $Y$.
\end{lem}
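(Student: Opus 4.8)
The plan is to show that removing a maximal open cube $e$ of dimension $\le n-3$ from the branching locus $C$ does not change the completed branched cover, because the covering map $\overline{X \smallsetminus C} \to X \smallsetminus C$ extends over (a neighbourhood of) $e$ in a unique way, and the metric completion is insensitive to re-gluing along a set of codimension $\ge 2$ in $Y$. The first step is to understand the local picture around $e$: since $e$ is a maximal cube of $C$ of dimension $k \le n-3$ with no cube $e'$ satisfying $\bar e \subsetneq e' \subsetneq C$, a regular neighbourhood $N(e)$ of $e$ in $T^n$ looks like $e \times D^{n-k}$, where $D^{n-k}$ is an open disc of dimension $n-k \ge 3$, and within this neighbourhood $C$ meets $N(e)$ in $e$ together with (the cones on) the strata of $C$ of dimension $> k$ incident to $\bar e$. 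The key point is that $N(e) \smallsetminus C$ deformation retracts onto $N(e) \smallsetminus e \simeq e \times (D^{n-k} \smallsetminus \{0\})$, and $\pi_1$ of this is unchanged by further deleting $e$ from $C'$: deleting a subset of codimension $\ge 3$ does not affect $\pi_1$, so $N(e) \smallsetminus C' $ and $N(e) \smallsetminus C$ have isomorphic fundamental groups, compatibly with the inclusion into $X \smallsetminus (C \cup C')$.

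Next I would use this to extend the finite cover. Because the inclusion $X \smallsetminus C \hookrightarrow X \smallsetminus C'$ induces a $\pi_1$-isomorphism away from $e$ and $e$ has codimension $\ge 3$ (so deleting it does not change $\pi_1$ of the total space or of any neighbourhood), the finite-index subgroup of $\pi_1(X \smallsetminus C)$ defining $\overline{X \smallsetminus C}$ corresponds to a finite-index subgroup of $\pi_1(X \smallsetminus C')$, hence to a finite cover $\overline{X \smallsetminus C'}$ of $X \smallsetminus C'$. Restricting $\overline{X \smallsetminus C'} \to X \smallsetminus C'$ over $X \smallsetminus C$ recovers exactly $\overline{X \smallsetminus C}$ (covers of $X\smallsetminus C'$ are determined by their restriction to the $\pi_1$-isomorphic subspace $X \smallsetminus C$). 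Then I would lift the piecewise Euclidean metric on $\overline{X \smallsetminus C'}$, take the induced path metric, and form the completion $Y'$; the natural map $\overline{X\smallsetminus C} \hookrightarrow \overline{X \smallsetminus C'}$ is an isometric embedding with dense image (its complement lies over $e$, which is nowhere dense and of codimension $\ge 3$, hence adds no new points in the completion beyond those already forced by Cauchy sequences in $\overline{X\smallsetminus C}$), so it induces an isometry $Y \xrightarrow{\sim} Y'$. This is the asserted isomorphic isometry.

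The main obstacle I anticipate is the last claim: that the completions agree, i.e. that deleting the extra open cube $e$ from the complement genuinely adds no points to the metric completion and does not alter the path metric. One has to verify that every point of $\overline{X\smallsetminus C'}$ lying over $e$ is a limit of a Cauchy sequence already present in $\overline{X \smallsetminus C}$, with the same limiting distances — equivalently, that paths in $X \smallsetminus C'$ that wander near $e$ can be pushed off $e$ (using the codimension $\ge 3$ of $e$, via a small radial perturbation in the $D^{n-k}$ factor) with arbitrarily small length cost, and that this is compatible with the covering. This is where the hypothesis $\dim e \le n-3$ is essential: in codimension $\ge 3$ the sphere $S^{n-k-1}$ linking $e$ is simply connected, so there is no monodromy around $e$ and nothing obstructs filling in; in codimension $2$ the argument would fail precisely because of nontrivial monodromy. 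I would make the length-cost estimate explicit using the product structure of $N(e)$ and the local isometry of the covering, and check naturality of the resulting identification with respect to the branched covering maps down to $T^n$, which then extends verbatim to the general $X$ containing a cubical $T^n$ since the whole construction is local near $e \subset T^n$.
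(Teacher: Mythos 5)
Your proof is correct and follows essentially the same approach as the paper: the key point in both is that since $\dim e\leq n-3$, the sphere linking $e$ has dimension at least $2$ and is therefore simply connected, so the cover is unramified over a neighbourhood of each interior point of $e$ and removing $e$ from the branching locus does not change the metric completion. The paper phrases this directly by observing that the deleted neighbourhood of $x\in\mathring{e}$ lifts homeomorphically and the completion merely re-glues the sheets of $\mathbb{R}^{\dim e}$, whereas you package the same observation as the isomorphism $\pi_1(T^n\smallsetminus C)\cong\pi_1(T^n\smallsetminus C')$ followed by a density argument for the completion; these are equivalent formulations of the same argument.
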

\begin{proof}
Within $C$, $e$ is contained in no larger cube so each $x\in \mathring{e}$ has a neighbourhood $N(x)$ in $T^n$ such that $N(x)\smallsetminus C$ is homeomorphic to $\RR^n\smallsetminus \RR^{\rm{dim}(e)}$. Then $N(x)\smallsetminus C$ is homotopy equivalent to a sphere of dimension at least 2, so is simply connected. This deleted neighbourhood lifts homeomorphically to the cover and when we complete we are just gluing back in the copy of $\RR^{\rm{dim}(e)}$. This shows that in a neighbourhood of each point of $e$, the branched covering map is an unramified cover. Thus we can remove $e$ from the branching locus without affecting $\hat{X}$. 
\end{proof}

This lemma allows us to assume that the branching locus is a union of cubes of dimension $n-2$. 

\begin{lem}\label{alln-2}
Suppose that $e = e^{n-2}\subset C$ has a free face $e'$ (that is, there is no other cube glued to $e$ along $e'$) and let $C' = C\smallsetminus (int(e)\cup e')$. Then there is a cover of $T^n\smallsetminus C'$ whose completion is isometrically isomorphic to $Y$.
\end{lem}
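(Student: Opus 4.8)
The plan is to treat the passage from $C$ to $C'=C\smallsetminus(\mathrm{int}(e)\cup e')$ as an elementary collapse of the branching locus across the free face $e'$, and to show, in the spirit of Lemma~\ref{justadd}, that such a collapse does not change the branched cover. The difference from Lemma~\ref{justadd} is that $\mathrm{int}(e)$ has codimension exactly $2$ in $T^n$, so \emph{a priori} the finite cover $\overline{T^n\smallsetminus C}$ could be genuinely ramified along $e$; freeness of $e'$ is precisely what rules this out.

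First I would carry out a local analysis near the set being removed. A regular neighbourhood of the $(n-2)$-cube $\bar e$ in $T^n$ is a trivial $D^2$-bundle over $\bar e$. Because $e'$ is free, no cube of $C$ other than $\bar e$ meets a small neighbourhood of $\mathrm{int}(e')$, so over such a neighbourhood $T^n\smallsetminus C$ is modelled on $(\RR^3\smallsetminus\ell)\times\RR^{n-3}$ with $\ell$ a closed ray; since $\RR^3\smallsetminus\ell$ is contractible, $T^n\smallsetminus C$ is simply connected there. Similarly, along $\mathrm{int}(e)$ the subcomplex $C$ is locally just $\bar e$. The key claim is that the meridian $\mu_e$ of $e$ -- a small loop linking $\mathrm{int}(e)$ in a transverse $2$-disc -- is null-homotopic in $T^n\smallsetminus C$: one slides $\mu_e$ along $\mathrm{int}(e)$, staying inside the $D^2$-bundle neighbourhood of $\bar e$ with $\bar e$ deleted (where $C$ is locally just $\bar e$, so nothing obstructs the homotopy), until $\mu_e$ becomes a small loop linking $\ell$ near its free endpoint; in the neighbourhood of $\mathrm{int}(e')$ this loop bounds a disc.

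Granting the claim, $\ker\big(\pi_1(T^n\smallsetminus C)\to\pi_1(T^n\smallsetminus C')\big)$ is normally generated by $\mu_e$ -- filling in the codimension-$2$ stratum $\mathrm{int}(e)$ kills its meridian, and filling in the codimension-$3$ stratum $\mathrm{int}(e')$ changes nothing -- and hence is trivial; so $\overline{T^n\smallsetminus C}$ extends uniquely to a finite cover $\overline{T^n\smallsetminus C'}$ of $T^n\smallsetminus C'$. It then remains to identify its metric completion with $Y$. Away from $\bar e$ the two loci coincide. Near $\mathrm{int}(e)$ and near $\mathrm{int}(e')$ the monodromy of $\overline{T^n\smallsetminus C}$ is trivial (by the claim, together with simple connectivity near $\mathrm{int}(e')$), so the cover is a disjoint union of sheets there and its completion adjoins one flat copy of the stratum per sheet -- exactly the cells used when extending the cover over $T^n\smallsetminus C'$. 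Along the remaining faces of $\bar e$, which lie in $C'$, a local computation in a chart $\RR^k\times(\RR^{n-k}\smallsetminus\text{cube})$ shows that the two completions adjoin the same points and carry the same path metric. Hence the identity map on $\overline{T^n\smallsetminus C}$ extends to an isometric isomorphism $\widehat{T^n\smallsetminus C'}\cong Y$.

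I expect the most delicate step to be this last identification: verifying that reinserting $\mathrm{int}(e)$ into the complement perturbs neither the path metric near the non-free faces of $\bar e$ nor introduces hidden ramification there. The null-homotopy of $\mu_e$ is essentially a one-dimensional statement and should go through as sketched.
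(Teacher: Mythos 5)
Your argument is correct and rests on the same core idea as the paper's: the free face makes the deleted transverse slice near $\mathring{e'}$ simply connected, and this is what forces trivial monodromy around $e$. The implementation differs in flavour. The paper takes a geodesic $\gamma$ from $x \in \mathring e$ to the free face, forms a regular tubular neighbourhood, and argues that its boundary sphere $\partial \cong S^{n-1}$ with $C$ removed is contractible; consequently the cover restricted to the deleted neighbourhood of $\gamma$ is trivial, so the branched covering is unramified along $\gamma$. You instead bound the meridian $\mu_e$ directly by sliding it through $\mathring e$ into the simply connected region near $\mathring{e'}$ (the carrier of the homotopy staying clear of every face of $\bar e$ other than $e'$, so the only stratum of $C$ it meets is $\bar e$ itself), and then invoke the general fact that $\ker\big(\pi_1(T^n\smallsetminus C)\to\pi_1(T^n\smallsetminus C')\big)$ is normally generated by meridians of the deleted codimension-$2$ strata, so the cover extends. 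The $\pi_1$ formulation is cleaner and makes explicit why the removed codimension-$3$ face $e'$ creates no further obstruction; the paper's formulation is more visibly local and produces the product structure of the cover near $\gamma$ directly, which is what one ultimately feeds into the completion argument. Both treat the final metric identification briskly; you are right to flag it as the delicate step, but it is the same chart-by-chart verification already implicit in Lemma~\ref{justadd}.
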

\begin{proof}
Let $e'$ be a free face of $e$. Each point $x\in \mathring{e'}$ has a neighbourhood $N(x)$ such that $N(x)\smallsetminus C$ is homeomorphic to $\RR^n\smallsetminus (\RR_+\times\RR^{n-4})$ which is simply connected. This deleted neighbourhood lifts to the cover, and when we take completions we are just gluing back in the copy of $\RR_+\times\RR^{n-4}$. This shows that in a neighbourhood of the free face the branched covering map is an unramified covering. 

Let $x$ be an arbitrary point in the interior of $e$ and let $\gamma$ be a geodesic from $x$ to the free face of $e$. The boundary $\partial$ of a regular neighbourhood of $\gamma$ in $T^n$ is homeomorphic to $S^{n-1} = S^{n-1}_-\cup (S^{n-2}\times [0,1])\cup S^{n-1}_+$ where $S_{\pm}^{n-1}$ are hemispheres centred around the endpoints of $\gamma$. Then $(\partial \smallsetminus C)\simeq (S^{n-1}\smallsetminus S^{n-1}_-\cup (S^{n-2}\times [0,1]))\simeq \star$. A deleted neighbourhood of $\gamma$ in $T^n\smallsetminus C$ lifts to the cover. So, again,  the branched covering is unramified in a neighbourhood of $\gamma$, which shows that we can remove $e$ from the branching locus.  
\end{proof}

\begin{lem}\label{nofree}
A locally convex subcomplex of $T^n$ of dimension $n-2$ that has no free faces is a disjoint union of copies of $T^{n-2}$. 
\end{lem}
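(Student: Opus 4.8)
The plan is to argue that a codimension-two, locally convex subcomplex $C$ of $T^n = \RR^n/\ZZ^n$ with no free faces must be, in each of the $n$ coordinate ``slab'' directions, a union of parallel coordinate subtori, and that matching up these pieces forces $C$ to be a disjoint union of flat $(n-2)$-tori. Throughout I work in the universal cover $\RR^n$ with its standard cubical structure, and let $\widetilde{C}$ be the preimage of $C$; it is a $\ZZ^n$-invariant, locally convex subcomplex of dimension $n-2$ with no free faces, and it suffices to analyse $\widetilde{C}$ and then quotient.

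First I would record the local structure at a cube of $\widetilde{C}$. Since $\widetilde{C}$ is $(n-2)$-dimensional with no free faces, every $(n-3)$-cube of $\widetilde{C}$ is a face of at least two $(n-2)$-cubes of $\widetilde{C}$; combined with local convexity (the link of a cube of $\widetilde{C}$ in $\widetilde{C}$ is a full subcomplex of its link in $\RR^n$, which is a flag sphere), this will pin down the link of each $(n-2)$-cube, each $(n-3)$-cube, etc. The key local claim is: at every vertex $v\in\widetilde{C}$, the set of coordinate directions $i$ such that some edge of $\widetilde{C}$ at $v$ lies in direction $\pm e_i$ has size exactly $n-2$, and moreover $\widetilde{C}$ near $v$ is exactly the $(n-2)$-dimensional coordinate subspace through $v$ spanned by those directions. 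Indeed, local convexity means $\lk(v,\widetilde{C})$ is a full subcomplex of the octahedron $\lk(v,\RR^n)$ (an $(n-1)$-sphere that is the $n$-fold join of copies of $S^0$); a full subcomplex of this join is itself a join of subsets of the $S^0$ factors, and ``no free faces'' together with ``dimension $n-2$'' forces it to be the full sub-join on exactly $n-2$ of the factors — any factor contributing only one of its two vertices would create a free face, and any factor contributing zero vertices drops the dimension. This is the step I expect to be the main obstacle: making the ``full subcomplex of a join is a join'' argument airtight and correctly extracting the no-free-face consequence at every cell, not just at top cells.

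Next I would globalise. The previous step shows that each connected component $D$ of $\widetilde{C}$ is locally a coordinate $(n-2)$-plane; I would show the set $I_D\subseteq\{1,\dots,n\}$ of ``active'' directions is locally constant, hence constant on $D$ (if $v,w$ are adjacent vertices of $D$ along direction $e_j$, then $j\in I_v$ and $j\in I_w$, and the links at $v$ and $w$ must agree on the square spanned by $e_j$ and any other active direction, forcing $I_v = I_w$). So each component $D$ is a connected, locally convex subcomplex of $\RR^n$ that is locally the coordinate plane $v_0 + \mathrm{span}\{e_i : i\in I_D\}$; being connected and locally flat in this rigid way, $D$ is exactly an affine coordinate $(n-2)$-plane $P + c$ with $P = \mathrm{span}\{e_i:i\in I_D\}$. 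Distinct components are disjoint by definition, and two parallel affine coordinate planes in $\widetilde{C}$ differing by a vector with a nonzero component in an inactive direction cannot be joined, consistent with them being separate components.

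Finally I would push this down to $T^n$. The group $\ZZ^n$ acts on the components of $\widetilde{C}$; the stabiliser of a component $D = P_{I}+c$ is exactly $\ZZ^n\cap P_I \cong \ZZ^{n-2}$ (a translation in an inactive direction moves $D$ off itself since $c$'s inactive coordinates change), so $D/(\ZZ^n\cap P_I)$ is a flat $(n-2)$-torus, and these tori, one per $\ZZ^n$-orbit of components, are pairwise disjoint in $T^n$ and exhaust $C$. Hence $C$ is a finite disjoint union of copies of $T^{n-2}$, as claimed. I would present the join decomposition lemma (full subcomplex of a spherical join of $S^0$'s is a sub-join) as a separate elementary observation, since it is the linchpin and is cleaner stated once and reused at every cell dimension.
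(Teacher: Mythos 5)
Your proof is correct, and it takes a more explicit route than the paper's while sharing the same key local observation. Both arguments begin by noting that local convexity forces $\lk(v,C)$ to be a full subcomplex of $\lk(v,T^n)=S^0\ast\cdots\ast S^0$, hence a sub-join $V_1\ast\cdots\ast V_n$; then ``dimension $n-2$'' and ``no free faces'' force exactly $n-2$ of the $V_i$ to be nonempty and each of those to be the full $S^0$. Your worry about extracting the no-free-face consequence at every cell is unfounded: for an $(n-3)$-cube $\tau$ through $v$ one has $\lk(\tau,C)=\lk(\bar\tau,\lk(v,C))$, which in the sub-join is precisely the factor $V_{i_0}$ omitted by $\bar\tau$, so a singleton factor does produce a free face as you claim. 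Where you diverge from the paper is in globalising: the paper asserts that $C$ is a closed non-positively curved $(n-2)$-manifold, pulls back $n-2$ generators of $\pi_1(T^n)$ to conclude each component has fundamental group $\ZZ^{n-2}$, and identifies it as a flat $(n-2)$-torus (a rather terse argument that leans on the structure theory of closed flat manifolds). You instead lift to $\RR^n$, show the set $I_v$ of active directions is locally constant (transported across an edge via the square spanned by that edge and any other active direction), deduce that each component of $\widetilde C$ is exactly an integer-translate of a coordinate $(n-2)$-plane, and quotient by the $\ZZ^{n-2}$ stabiliser. This is more elementary and exhibits the torus structure concretely rather than appealing to a classification. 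One caveat applies to both your argument and the paper's: ``dimension $n-2$'' only guarantees some $(n-2)$-cube exists, and ``no free faces'' (as used in Lemma~\ref{alln-2}) only constrains $(n-3)$-faces of $(n-2)$-cubes; strictly speaking a lower-dimensional component could sneak in without violating either hypothesis, making the statement as worded false. In context this is harmless because Lemma~\ref{justadd} has already been applied to make the locus pure, but it would be worth saying so explicitly when you argue that every vertex has an $(n-3)$-dimensional link.
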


\begin{proof}
The locally convex subcomplex with no free faces is a closed non-positively curved manifold and defines a class in $H_{n-2}(T^n)$. The link of every vertex in $T^n$ is a copy of $S^{n-1}$ triangulated as $S^0\ast\dots\ast S^0$. After a cubical subdivision (if necessary) we can assume that the cubical neighbourhood of every vertex is $[-1,1]^n$. Each copy of $[-1,1]$ in this product defines a generator of the fundamental group of $T^n$. In the subcomplex each vertex will have link isomorphic to $S^0\ast\dots\ast S^0 = S^{n-3}$, where the $n-2$ copies of $S^0$ give $n-2$ generators showing that this submanifold has fundamental group $\ZZ^{n-2}$ and the manifold is an $(n-2)$-torus. 
\end{proof}

\begin{prop}
Let $b\colon  Y\to T^n$ be a branched cover. If a component $\Delta$ of the branching locus is a copy of $T^{n-2}$, then the restriction of $b$ to the preimage of $\Delta$ is a covering map.
\end{prop}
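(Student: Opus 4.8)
The plan is to show that $b$ is \emph{transverse} to $\Delta$, in the sense that along the whole of $b^{-1}(\Delta)$ the branched covering map is an \emph{unramified} local homeomorphism onto $\Delta$; a compactness argument will then upgrade this to a covering map.

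First I would pin down the local geometry of $\Delta$ inside $T^n$. As a component of the branching locus, $\Delta$ is a connected, locally convex cubical subcomplex of $T^n$, and by hypothesis a closed $(n-2)$-manifold. Lifting to the universal cover $\mathbb{R}^n$, standard facts about locally convex subcomplexes of non-positively curved cube complexes show that each component of the preimage of $\Delta$ is a closed convex cubical subcomplex, and it is a boundaryless $(n-2)$-manifold; a closed convex subset of $\mathbb{R}^n$ which is a boundaryless manifold of the same dimension as its affine hull must equal that hull, which, being spanned by cubes, is a coordinate $(n-2)$-plane. Hence $\Delta$ is a coordinate subtorus. Consequently, for each $p\in\Delta$, writing $e_p$ for the cube of $\Delta$ whose interior contains $p$, the link splits as $\lk(p,T^n) = \lk(p,\Delta)\ast(S^0\ast S^0)$, where the $S^0\ast S^0$ factor is the all-right $4$-cycle spanned by the two meridian directions of $\Delta$. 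Taking Euclidean cones (Theorem \ref{joinisproduct}, with $C_0$ as in Definition \ref{conedef}), a neighbourhood $N(p)$ of $p$ in $T^n$ is isometric to a product $B\times\mathbb{R}^2$ with $B\cong\mathbb{R}^{n-2}$ and $\Delta\cap N(p) = B\times\{0\}$; since $\Delta$ is a connected component of the branching locus $C$, we may shrink $N(p)$ so that $N(p)\smallsetminus C = B\times(\mathbb{R}^2\smallsetminus\{0\})$, which has fundamental group $\mathbb{Z}$, generated by the meridian of $\Delta$.

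Next I would analyse the finite cover and its completion near $\Delta$. The restriction of the finite cover $\overline{T^n\smallsetminus C}$ to $N(p)\smallsetminus C = B\times(\mathbb{R}^2\smallsetminus\{0\})$ is a disjoint union of finite-sheeted coverings; since this space has fundamental group $\mathbb{Z}$, each such covering is isometric to $B\times\big(C_0(S^1_k)\smallsetminus\{0\}\big)$ for some integer $k\geq 1$, where $S^1_k$ denotes a circle of circumference $2\pi k$, the covering being the identity on $B$ and the $k$-fold local isometry $C_0(S^1_k)\smallsetminus\{0\}\to\mathbb{R}^2\smallsetminus\{0\}$ on the second factor. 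Passing to the path-metric completion is a local operation near $\Delta$ --- a short path in $\overline{T^n\smallsetminus C}$ joining two points lying over $N(p)$ may be taken to lie over $N(p)$, since $N(p)$ is convex and $b$ is a local isometry away from $C$ --- so over $N(p)$ the completion $Y$ is obtained by gluing back, on each sheet, the cone point times $B$, yielding $B\times C_0(S^1_k)$, with $b$ restricted to that sheet equal to $\mathrm{id}_B\times\big(C_0(S^1_k)\to\mathbb{R}^2\big)$. In particular the stratum of $Y$ lying over $\Delta$ is, on each sheet, the copy of $B\times\{0\}$, and $b$ carries it onto $\Delta\cap N(p) = B\times\{0\}$ by the identity. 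Thus every point of ramification of $b$ lies in the meridian $\mathbb{R}^2$-direction, and $b$ restricts to an unramified local homeomorphism $b^{-1}(\Delta)\to\Delta$; the degrees $k$ that occur are finite and sum over all sheets to $\deg b$.

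Finally, since $\Delta$ is a subcomplex of $T^n$ it is closed, so $b^{-1}(\Delta)$ is closed in the compact cube complex $Y$, hence compact. By the previous step $b|_{b^{-1}(\Delta)}\colon b^{-1}(\Delta)\to\Delta$ is a surjective local homeomorphism onto the connected, locally connected Hausdorff space $\Delta\cong T^{n-2}$, and it is proper because its domain is compact; a proper local homeomorphism onto such a space is a covering map, which is the assertion. The step I expect to be the main obstacle is the completion analysis in the previous paragraph: one must verify carefully that the path metric on $\overline{T^n\smallsetminus C}$ near $\Delta$ is governed by a neighbourhood of $\Delta$, and that the metric completion of a finite cyclic cover of $B\times(\mathbb{R}^2\smallsetminus\{0\})$ adds back exactly one copy of $B$ per sheet, on which $b$ is the identity --- in other words that the $\Delta$-directions and the meridian direction cannot become entangled in the cover. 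This holds because the deck transformation realising the meridian acts on $B\times\mathbb{R}^2$ as a rotation of the $\mathbb{R}^2$-factor fixing its centre, so it acts trivially on the extra stratum of the completion, which is therefore canonically $B\times\{0\}$ on each sheet.
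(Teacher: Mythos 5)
Your argument is correct, but it takes a noticeably more local route than the paper's. The paper fixes a global tubular neighbourhood $T^{n-2}\times D^2$ of $\Delta$, observes that its deleted version is $T^{n-2}\times(D^2\smallsetminus\{0\})$, uses that every connected finite cover of this space is again homeomorphic to $T^{n-2}\times(D^2\smallsetminus\{0\})$ (since $\pi_1$ is free abelian), and concludes that the metric completion adds back $T^{n-2}\times\{0\}$ on each sheet, whence $b$ is unramified over $\Delta$. You instead work pointwise: at each $p\in\Delta$ you use the link splitting and the join--product theorem to exhibit a chart $B\times\RR^2$, classify the restricted cover as a disjoint union of cyclic branched covers $B\times C_0(S^1_k)$, verify that the path-metric completion is a local operation and adds exactly one copy of $B$ per sheet, and finally upgrade the resulting unramified local homeomorphism $b^{-1}(\Delta)\to\Delta$ to a covering map via properness. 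The local approach is longer but more self-contained; in particular your paragraph explaining why completion is local near $\Delta$ and why the deck action cannot mix the $\Delta$-direction with the meridian direction makes explicit a point the paper glosses over, and your closing proper-local-homeomorphism step cleanly globalizes the local conclusion. The opening paragraph in which you re-derive that $\Delta$ is a coordinate subtorus is redundant in context (the surrounding lemmas have already established this), but it does no harm.
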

\begin{proof}

The component $\Delta$ has a neighbourhood homeomorphic to $T^{n-2}\times D^2$. Deleting $\Delta$ we obtain $T^{n-2}\times (D^2\smallsetminus\{0\})$. Any finite cover of $T^{n-2}\times (D^2\smallsetminus\{0\})$ is again homeomorphic to $T^{n-2}\times (D^2\smallsetminus \{0\})$. Taking the metric completion has the effect of adding $T^{n-2}\times\{0\}$. From this we see that the map on the preimage of $\Delta$ is an unramified covering map. 
\end{proof}

If we know that the cover is non-positively curved and the torus is locally convex, then the fundamental group of our cover will have a $\ZZ^2$ subgroup and will not be hyperbolic. 

\begin{prop}
The preimage of $C$ is locally convex and the cover is non-positively curved.  
\end{prop}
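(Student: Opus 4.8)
The plan is to reduce the whole statement to a local computation along the branching locus. After the reductions provided by Lemmas~\ref{justadd} and~\ref{alln-2}, together with Lemma~\ref{nofree}, I may assume that the branching locus is a disjoint union $C=\Delta_1\sqcup\dots\sqcup\Delta_r$ of locally convex subcomplexes of $T^n$, each isometric to $T^{n-2}$ (if instead $C=\emptyset$ then $Y$ is an ordinary finite cover of $T^n$, which is nonpositively curved and already contains a flat $T^n$, so there is nothing to prove). By the preceding Proposition the restriction of $b$ to $b^{-1}(\Delta_k)$ is a covering map onto $\Delta_k$, and since every finite cover of a torus is a torus, $b^{-1}(C)$ is again a disjoint union of copies of $T^{n-2}$. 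It therefore remains to verify two things: that Gromov's link condition holds at every vertex of $Y$, and that $b^{-1}(C)$ is a full subcomplex of $Y$ all of whose cube-links are full, this being the combinatorial criterion for local convexity in a nonpositively curved cube complex.

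The heart of the argument is a local product model for $Y$ along $C$. Subdividing $T^n$ cubically if necessary (this affects neither nonpositive curvature nor local convexity, and is already used in the proof of Lemma~\ref{nofree}), each vertex $v$ of a component $\Delta_k$ has a cubical neighbourhood $U_v\cong[-1,1]^n$ in which, by the argument of Lemma~\ref{nofree}, $C$ is precisely $[-1,1]^{n-2}\times\{(0,0)\}$ for a suitable splitting of the coordinates into an $(n-2)$-block tangent to $\Delta_k$ and a complementary $2$-block; hence $U_v\smallsetminus C=[-1,1]^{n-2}\times\big([-1,1]^2\smallsetminus\{(0,0)\}\big)$. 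Because the first factor is simply connected, each component of the preimage of $U_v\smallsetminus C$ in the defining cover of $X\smallsetminus C$ splits as $[-1,1]^{n-2}\times W$ with $W\to[-1,1]^2\smallsetminus\{(0,0)\}$ a connected cover, necessarily an $m$-fold cyclic cover for some $m\geq 1$; its metric completion $\widehat W$ is the two-dimensional Euclidean cube complex obtained by gluing $4m$ unit squares cyclically around a single central vertex $o$, so that $\lk(o,\widehat W)$ is the cycle of length $4m$. Since metric completion commutes with taking a product with a complete space, a neighbourhood of each vertex $w$ of $Y$ over $v$ is isometric to $[-1,1]^{n-2}\times\widehat W$, with $b^{-1}(C)$ corresponding to the factor $[-1,1]^{n-2}\times\{o\}$.

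Granting this model, the two remaining points are immediate. If $w$ is a vertex of $Y$ not lying over $C$ then $b$ is a local isometry near $w$, so $\lk(w,Y)\cong\lk(b(w),T^n)$, which is flag because $T^n$ is nonpositively curved. If $w$ lies over a vertex $v\in C$ then the product model gives $\lk(w,Y)\cong (S^0)^{\ast(n-2)}\ast\lk(o,\widehat W)$, the join of the boundary of an $(n-2)$-dimensional cross-polytope with a cycle of length $4m\geq 4$; both are flag (a cycle of length at least $4$ contains no empty simplex), and a join of flag complexes is flag, so $\lk(w,Y)$ is flag. Thus every vertex link of $Y$ is flag and $Y$ is nonpositively curved by Gromov's criterion. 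For local convexity it suffices to treat each component of $b^{-1}(C)$ separately; in each chart $[-1,1]^{n-2}\times\widehat W$ the subcomplex $b^{-1}(C)=[-1,1]^{n-2}\times\{o\}$ is a metric product factor, hence convex, so it is full and the link of each of its cubes $c$ sits inside $\lk(c,Y)$ as a join factor, which is automatically a full subcomplex. Hence $b^{-1}(C)$ is locally convex.

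The main obstacle is Step~1, pinning down the local structure of $Y$ along $C$: one must check carefully that the globally complicated finite cover, when restricted to the chart $U_v\smallsetminus C$, genuinely splits off the simply connected $(n-2)$-cube factor; that passing to the metric completion of $\overline{X\smallsetminus C}$ distributes over this product and reinserts exactly $[-1,1]^{n-2}\times\{o\}$; and that the completed cyclic cover of the punctured square really is the claimed arrangement of $4m$ squares, so that its link at the cone point is a cycle of length at least $4$. There is also minor bookkeeping needed to see that the cubical subdivision invoked in Lemma~\ref{nofree} does not alter the branched-cover construction. Once the model is established the link condition and the fullness criterion follow at once, and together with $n-2\geq 2$ the locally convex flat $T^{n-2}\subset Y$ yields a $\ZZ^2\leq\pi_1(Y)$, so $Y$ is not hyperbolic — exactly what is needed for the theorem.
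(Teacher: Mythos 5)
Your proposal is correct and follows essentially the same route as the paper: both work from the local product model near a point of $C$ (a $D^{n-2}$ factor tangent to $C$ times a transverse disk) and observe that the finite cover can only lengthen the transverse circle, so the $D^{n-2}$ factor remains convex (giving local convexity) and the link remains nonpositively curved. The only cosmetic difference is that you verify the curvature via Gromov's flag-link criterion on $(S^0)^{\ast(n-2)}\ast\lk(o,\widehat W)$, while the paper argues metrically that the link is a join $S^1\ast S^{n-3}$ of CAT(1) spaces and hence CAT(1); these are equivalent formulations of the same local check.
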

\begin{proof}
Looking at a point in $C$, we see a neighbourhood of the form $S^1\ast D^{n-2}$ where $S^1$ has length $2\pi$. The preimage of this in the cover will be several disjoint copies of $S^1\ast D^{n-2}$, where the length of $S^1$ may now be greater than $2\pi$. The copy of $D^{n-2}$ is convex in $S^1\ast D^{n-2}$ and so the embedding of the preimage of $C$ is locally convex. 

The link of a vertex will be copy of $S^1\ast S^{n-3}$, which is a CAT(1) space, as the join of two CAT(1) spaces. Thus the cover is non-positively curved.  
\end{proof}

\begin{thm}
If $X$ is an $n$-dimensional cube complex, $n>3$, which contains an isometric copy of $n$-dimensional torus as a cubical subcomplex, then no non-positively curved branched cover of $X$ is hyperbolic. 
\end{thm}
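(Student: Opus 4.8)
The plan is to reduce to the torus case treated above, by restricting an arbitrary branched cover to the torus subcomplex. Let $T\subset X$ be the given locally convex cubical subcomplex isometric to $T^n$, let $b\colon Y\to X$ be a non-positively curved branched cover over a branching locus $C$, and note that $Y$ is compact since the cover $\overline{X\smallsetminus C}$ used to build it is finite. Put $S=b^{-1}(T)\subset Y$. The first point is that $S$ is isometrically isomorphic to the branched cover of $T\cong T^n$ over $C\cap T$: restricting the cover $\overline{X\smallsetminus C}$ over $T\smallsetminus(C\cap T)=T\cap(X\smallsetminus C)$ gives a finite, possibly disconnected, cover of $T\smallsetminus(C\cap T)$ whose metric completion is exactly $b^{-1}(T)$, because the branching locus has codimension at least two, so that $T\smallsetminus(C\cap T)$ is dense in $T$ and $b$ is continuous. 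Observe also that $C\cap T$ is a locally convex subcomplex of $T$, being the intersection of the locally convex subcomplexes $C$ and $T$.

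Next I would run the analysis of the torus case on $S$, using $C\cap T$ in place of a branching locus of $T^n$. The relevant statements -- Lemmas \ref{justadd}, \ref{alln-2}, \ref{nofree} and the two Propositions following them -- have proofs that are local and topological and never invoke the second axiom of a branching locus, so they apply here with only cosmetic changes. Concretely: by Lemmas \ref{justadd} and \ref{alln-2} one may delete from $C\cap T$ every cube of dimension at most $n-3$ and every free face of an $(n-2)$-cube without altering the completion $S$ (here $n>3$ is what makes the relevant deleted neighbourhoods in $T^n$ simply connected and makes $n-2\geq 2$). After finitely many such steps the branching set is a locally convex subcomplex of $T^n$ of dimension $n-2$ with no free faces, hence by Lemma \ref{nofree} a disjoint union of copies of $T^{n-2}$; by the Proposition asserting that branching over a $T^{n-2}$-component is unramified, the map $S\to T$ is then a genuine (finite) covering map. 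Consequently $S$ is homeomorphic to a finite cover of $T^n$, so $\pi_1(S)$ is a finite-index subgroup of $\ZZ^n$ and therefore contains a copy of $\ZZ^2$. (Equivalently: the preimage of one $T^{n-2}$-component of the reduced branching set is a flat $(n-2)$-torus sitting inside $S$, and $n-2\geq 2$.)

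To finish, I would transport this $\ZZ^2$ into $\pi_1(Y)$. The crucial ingredient is that $S=b^{-1}(T)$ is locally convex in $Y$. Away from $b^{-1}(C)$ the map $b$ is a local isometry and $T$ is locally convex in $X$, so $S$ is locally convex there; at a point $p$ of $b^{-1}(C\cap T)$ one analyses the link, which is a branched cover of $\lk(b(p),X)$ over $\lk(b(p),C)$, and checks that the preimage of the convex subcomplex $\lk(b(p),T)$ is again convex -- this is exactly the local computation carried out in the final Proposition of the section, made possible by the fact that $\lk(b(p),T)$ meets the branching locus $\lk(b(p),C)$ in the convex subcomplex $\lk(b(p),C\cap T)$. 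Since $Y$ is a non-positively curved cube complex, a locally convex subcomplex is $\pi_1$-injective, so $\ZZ^2\cong\pi_1(S)$ embeds in $\pi_1(Y)$. A hyperbolic group contains no copy of $\ZZ^2$, so $\pi_1(Y)$ is not hyperbolic; alternatively, since $\pi_1(Y)$ acts geometrically on the CAT(0) cube complex $\tilde X$'s analogue $\tilde Y$, the Flat Torus Theorem produces an isometrically embedded $\EE^2\hookrightarrow\tilde Y$, so $\tilde Y$ is not hyperbolic.

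The step I expect to be the real obstacle is the local-convexity claim for $S=b^{-1}(T)$ over the branching set: one genuinely has to re-run the link computation of the final Proposition rather than quote it, keeping track of how $\lk(p,T)$, $\lk(p,C)$ and their intersection sit inside $\lk(p,X)$ and verifying that convexity survives passage to the link branched cover. This is also the only place where it matters that $T$ is a genuinely locally convex copy of $T^n$ and not merely an abstract isometric copy.
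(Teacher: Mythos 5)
Your overall strategy matches the paper's: restrict the branched cover to the torus subcomplex, reduce the branching locus to a disjoint union of $(n-2)$-tori via Lemmas~\ref{justadd}, \ref{alln-2}, \ref{nofree}, extract a $\ZZ^2$ from one such component, and transport it into $\pi_1(Y)$ via local convexity. However, there is a genuine error in your middle step. You claim that after the reduction ``the map $S \to T$ is then a genuine (finite) covering map,'' justifying this by ``the Proposition asserting that branching over a $T^{n-2}$-component is unramified.'' That proposition says only that the \emph{restriction} of $b$ to $b^{-1}(\Delta)$ is a covering map, not that $b$ is unramified near $\Delta$ in the transverse directions. Indeed, one cannot avoid transverse ramification: $\pi_1\bigl(T^n \smallsetminus T^{n-2}\bigr) \cong \ZZ^{n-2} \times F_2$, and the meridian of $T^{n-2}$ is the commutator of the two free generators, which maps nontrivially under any homomorphism to a nonabelian finite group. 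So in general $S \to T$ is a nontrivial branched cover, $\pi_1(S)$ need not be abelian, and your assertion that $\pi_1(S)$ is a finite-index subgroup of $\ZZ^n$ is false. Your parenthetical ``(Equivalently: the preimage of one $T^{n-2}$-component \dots is a flat $(n-2)$-torus sitting inside $S$)'' is not equivalent to this false claim --- it is the \emph{correct} route, and it is the one the paper actually takes.

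What is then missing is the completion of the correct route: once you know that the preimage of a $T^{n-2}$-component is a flat $(n-2)$-torus $\widehat\Delta$, you still need $\ZZ^{n-2} \cong \pi_1(\widehat\Delta)$ to inject into $\pi_1(Y)$. Your local-convexity argument is run for $S = b^{-1}(T)$, but what is needed (and what the paper's proposition ``The preimage of $C$ is locally convex\dots'' supplies) is local convexity of $\widehat\Delta$ itself in the non-positively curved complex. With that in hand the Flat Torus Theorem or $\pi_1$-injectivity of locally convex subcomplexes finishes the argument, since $n-2 \geq 2$. So: drop the covering-map claim, promote the parenthetical to the main line, and replace the local-convexity verification for $S$ by the one for $\widehat\Delta$ (or chain the two together); the rest of your proposal is sound and closely parallels what the paper does.
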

\begin{proof}
Consider a component of the preimage of $T^n$. This is a branched cover of $T^n$, that will contribute a $\ZZ^2$ subgroup to the fundamental group of the branched covering, which is therefore not hyperbolic. 
\end{proof}

In the case where $X$ is the product of $n\geq 4$ graphs with no vertices of valence 1, every branched cover is CAT(0). By Lemmas \ref{justadd}, \ref{alln-2} and \ref{nofree} we can assume that the branching locus $C$ is a union of $n-2$ cubes. Let $\hat{C}$ be the preimage of $C$ under the branching map $b$. Using the above we can also see that the map $b\colon \hat{C}\to L$ is a covering map.

\begin{lem}[Brady \cite{brady_branched_1999}, Lemma 5.4]
Let $X$ be a non-positively curved cube complex, $C\subset X$ a branching locus, and $v\in C$ a vertex. Then $\lk(v ,C)$ is a full subcomplex of $\lk(v , X)$.
\end{lem}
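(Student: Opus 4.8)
The plan is to reduce the lemma to a purely combinatorial statement about cubes and then cash in the metric content of ``locally convex''.

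\textbf{Step 1: reduction.} First I would record the dictionary between $\lk(v,X)$ and cubes at $v$: the $k$-simplices of $\lk(v,X)$ correspond bijectively to the $(k+1)$-cubes of $X$ having $v$ as a vertex, a simplex corresponding to a cube $c$ having as its vertex set precisely the edges of $c$ incident to $v$; the same holds for $\lk(v,C)$, which sits inside $\lk(v,X)$ as the subcomplex coming from those cubes at $v$ that are contained in $C$. Under this dictionary, ``$\lk(v,C)$ is full in $\lk(v,X)$'' is equivalent to: if $c$ is a cube of $X$ containing $v$ and every edge of $c$ incident to $v$ lies in $C$, then $c\subseteq C$ (so that the corresponding simplex lies in $\lk(v,C)$). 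So it suffices to prove this last assertion.

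\textbf{Step 2: the cube statement.} Since $X$ is non-positively curved, $v$ has a convex neighbourhood $U$; shrinking $U$ and using local convexity of $C$ I may assume in addition that $C\cap U$ is convex in $U$, so that the $X$-geodesic between any two points of $C\cap U$ lies in $C\cap U$. Now fix a cube $c$ at $v$, write it as $[0,1]^k$ with $v$ at the origin, and assume its edges $e_1,\dots,e_k$ at $v$ all lie in $C$. Pick $t>0$ small enough that the point $p_i$ at distance $t$ from $v$ along $e_i$ lies in $U$; then $p_1,\dots,p_k\in C\cap U$. I would then iterate geodesic midpoints: let $m_2$ be the midpoint of $p_1$ and $p_2$, and inductively let $m_j$ be the midpoint of $m_{j-1}$ and $p_j$. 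At each stage the two relevant points lie in a single face of $c$ and the straight Euclidean segment between them passes through the relative interior of that face; such a segment is locally geodesic (near a relative-interior point of a Euclidean face, a cube complex splits metrically as that face times the cone on the link of the face), and since it is contained in the convex set $U$ it is the $X$-geodesic. Hence $m_j$ is simply the Euclidean midpoint computed inside $c$, and $m_j\in C\cap U$ by local convexity. The final point $m_k$ has all $k$ coordinates strictly positive, hence lies in the interior of the cube $c$; since $C$ is a subcomplex meeting the interior of the cell $c$, it follows that $c\subseteq C$, which is what we wanted.

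The delicate point will be the midpoint step: I must verify that the ``obvious'' straight segment between consecutive points is genuinely the geodesic of $X$ -- this is exactly where non-positive curvature and the convex neighbourhood $U$ are used, together with the observation that a straight segment through the relative interior of a Euclidean face of a cube complex is locally geodesic -- and I must keep all the points within the scale on which $C$ is convex, which is the reason for pushing everything close to $v$ by taking $t$ small. Everything else (the simplex--cube dictionary of Step 1 and the one-line check that $m_k$ lies in the interior of $c$) is routine bookkeeping.
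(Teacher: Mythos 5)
This lemma is quoted from Brady's paper \cite{brady_branched_1999} and the present paper does not reproduce a proof, so there is nothing in the source to compare against line by line; I can only assess your argument on its own merits, and I think it is essentially correct.

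Your Step 1 reduction is exactly right: in a non-positively curved cube complex the link of a vertex is simplicial, so simplices are determined by their vertex sets, and fullness of $\lk(v,C)$ in $\lk(v,X)$ unwinds to precisely the cube statement ``if all edges of a cube $c$ at $v$ lie in $C$, then $c\subseteq C$.'' Your Step 2 is the standard way to extract the combinatorial condition from \emph{metric} local convexity: choose a convex neighbourhood $U$ of $v$ with $C\cap U$ convex, park points $p_i\in C\cap U$ a short distance along each edge, and iterate geodesic midpoints. The two pivots are both handled correctly: (i) at each stage the interior of the Euclidean segment between the current pair of points lies in the relative interior of a single face of $c$ (the coordinate count you implicitly carry works — after $j$ steps the $j$-th iterate has its first $j$ coordinates strictly positive and the rest zero), so the segment is locally geodesic and hence, in the CAT(0) patch $U$, the unique geodesic; and (ii) a subcomplex that contains a point of $\mathring{c}$ must contain all of $c$. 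The only thing worth flagging explicitly in a write-up is that this argument presupposes Brady's ``locally convex'' is the \emph{metric} notion (a convex-neighbourhood condition); if it were instead defined combinatorially as link-fullness the lemma would be vacuous. Under the metric definition, which is the one in force, your proof is sound.
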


The following proposition shows that the cover will be non-positively curved.

\begin{prop}
Let $K$ be a non-positively curved cube complex and let $L$ be a branching locus. Let $\hat{L}$ be the preimage of $L$ under the branched covering $b\colon \hat{K}\to K$. If the map $b\colon \hat{L}\to L$ is an unramified covering, then $\hat{K}$ is non-positively curved. 
\end{prop}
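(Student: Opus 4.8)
The plan is to verify Gromov's link condition (Theorem of Gromov cited above) at every vertex of $\hat{K}$, i.e. to show that $\lk(\hat{v}, \hat{K})$ is a flag complex for each vertex $\hat{v}$ of $\hat{K}$. Since $b$ restricted to $\hat{K}\smallsetminus\hat{L}$ is an honest (unramified) covering map, local isometries are preserved there, so the only vertices where anything must be checked are the vertices $\hat{v}\in\hat{L}$. Fix such a vertex and let $v=b(\hat{v})\in L$. The first step is to describe the link $\lk(\hat{v},\hat{K})$ in terms of $\lk(v,K)$ and the covering data. Writing $\Lambda=\lk(v,K)$ and $\Lambda_L=\lk(v,L)$, we know by the cited Lemma (Brady, Lemma 5.4) that $\Lambda_L$ is a \emph{full} subcomplex of $\Lambda$. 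Away from the branching, the link $\lk(\hat v,\hat K)$ is assembled from pieces of $\Lambda\smallsetminus\Lambda_L$ glued by the monodromy of the cover $\overline{K\smallsetminus L}\to K\smallsetminus L$, together with a single copy of $\Lambda_L$ (put back in on completion) as the ``branching core''. Concretely, $\lk(\hat v,\hat K)$ has vertex set $\big(\text{fibre over }\Lambda^{(0)}\smallsetminus\Lambda_L^{(0)}\big)\sqcup \Lambda_L^{(0)}$, and the key geometric input from the definition of the branched cover (Definition \ref{conedef} and the cone/join dictionary, Theorem \ref{joinisproduct}) is that the cone $C_0(\lk(\hat v,\hat K))$ is a branched cover of $C_0(\Lambda)$ branched over $C_0(\Lambda_L)$, with all cone radii (hence all simplex ``sizes'') unchanged because we lift the piecewise-Euclidean metric \emph{locally}. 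So $\lk(\hat v,\hat K)$ is again built from all-right spherical simplices, and it suffices to prove it is a flag complex.

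The second step is the flagness check. Suppose $w_0,\dots,w_k$ are pairwise-adjacent vertices of $\lk(\hat v,\hat K)$; I want to produce the spanning $k$-simplex. Project them down via $b$ to vertices $\bar w_0,\dots,\bar w_k$ of $\Lambda=\lk(v,K)$. Adjacency is preserved downstairs, and the $\bar w_i$ need not be distinct only if two of the $w_i$ lie over the same vertex of $\Lambda_L$ — but the single branching copy of $\Lambda_L$ injects, and two distinct $w_i$ lying over the same vertex outside $\Lambda_L$ cannot be joined by an edge since edges of $\hat K$ not meeting $\hat L$ are covered homeomorphically (an edge would have to project to a loop, impossible in the simplicial complex $\Lambda$ at a single vertex). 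Hence $\bar w_0,\dots,\bar w_k$ are distinct and pairwise adjacent in $\Lambda$, so by flagness of $\Lambda$ (which holds since $K$ is non-positively curved, Gromov) they span a simplex $\sigma$ of $\Lambda$. Now I must lift $\sigma$ through the branching to get the desired simplex of $\lk(\hat v,\hat K)$ containing $w_0,\dots,w_k$. There are two cases. If $\sigma\subset\Lambda_L$, then since $\Lambda_L$ lifts isomorphically to the branching core, all $w_i$ lie in that core and $\sigma$ lifts back to the spanning simplex. Otherwise $\sigma$ has at least one vertex outside $\Lambda_L$; then $\sigma$ meets $\Lambda\smallsetminus\Lambda_L$ in a nonempty open subcomplex, and because $\Lambda_L$ is a \emph{full} subcomplex, $\sigma\smallsetminus\Lambda_L$ is connected. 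A connected simplicial set lifts uniquely through the covering $b$ once the lift of one vertex ($w_0$, or any $w_i$ outside $\Lambda_L$) is chosen; carrying the lift out over $\sigma\smallsetminus\Lambda_L$ and then re-attaching the (unique, shared) branching-core faces of $\sigma$ in $\Lambda_L$ produces a simplex $\hat\sigma$ of $\lk(\hat v,\hat K)$ all of whose vertices are the prescribed $w_i$. This uses that the lift is consistent on the $w_i$ themselves — which holds because each $w_i$ is adjacent to $w_0$, hence lies in the connected piece $\sigma\smallsetminus\Lambda_L$ (if outside $\Lambda_L$) or in the single core (if inside), so its lift is forced.

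The third step is to assemble: having shown every set of pairwise-adjacent vertices of $\lk(\hat v,\hat K)$ spans a simplex, $\lk(\hat v,\hat K)$ is a flag complex built from all-right spherical simplices; by Gromov's criterion $\hat K$ is non-positively curved. (One should also note in passing that $\hat K$ is genuinely a cube complex with the piecewise-Euclidean structure, which is Brady's Lemma 5.3, already cited, so no work is needed there.)

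The main obstacle I anticipate is the bookkeeping in the second step: precisely matching the combinatorics of ``$\lk(\hat v,\hat K)$ built from $\lk(v,K)$ via the monodromy, with the branching core glued back in'' and making the lifting argument for $\sigma$ airtight. In particular one must be careful that the fullness of $\Lambda_L=\lk(v,L)$ in $\Lambda=\lk(v,K)$ is exactly the hypothesis that makes $\sigma\smallsetminus\Lambda_L$ connected (so that the covering lift is well-defined and lands on all the $w_i$ simultaneously); if $\Lambda_L$ were not full there could be a simplex $\sigma$ of $\Lambda$ whose vertices outside $\Lambda_L$ fall into several connected components after deleting $\Lambda_L$, and then different $w_i$ over the same simplex of $\Lambda_L$ might fail to be joined upstairs. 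Verifying that this connectivity is all that is needed — and that it genuinely follows from Brady's Lemma 5.4 — is the crux; the rest is the standard translation between ``CAT(0) cube complex'' and ``flag link'' and the functoriality of covering-space lifts, which I would state but not belabour.
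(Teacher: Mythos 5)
There is a genuine gap: your argument checks only half of Gromov's condition. The definition of a flag complex (used throughout the paper) has two clauses — the link must be \emph{simplicial}, and every pairwise-adjacent vertex set must span a simplex. You spend all your effort on the second clause (producing the filling simplex by projecting, applying flagness of $\lk(v,K)$, and lifting), but you never rule out the possibility that $\lk(\hat v,\hat K)$ fails to be a simplicial complex in the first place, i.e.\ that it contains two distinct simplices $\sigma\neq\tau$ with the same boundary. This is exactly where the paper's proof does its real work, and it is exactly where the hypothesis that $b\colon\hat L\to L$ is unramified earns its keep: if $\sigma$ and $\tau$ have a common vertex $w$ whose corresponding edge $e$ at $\hat v$ has $b(e)\not\subset L$, then $b$ is a local homeomorphism near $e$ and so the link map is locally injective at $w$, contradicting $b_{\lk}(\sigma)=b_{\lk}(\tau)$; if instead \emph{every} vertex of $\sigma$ corresponds to an edge in $\hat L$, then by fullness (Brady's Lemma 5.4) $\sigma$ and $\tau$ both lie in $\lk(\hat v,\hat L)$, and local injectivity of the \emph{covering} $b|_{\hat L}$ now gives the contradiction. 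Your write-up invokes the unramified hypothesis only to say there is a single ``branching core'' isomorphic to $\Lambda_L$, which is true but does not by itself preclude two distinct cubes of $\hat K$ being attached along the same collection of edges at $\hat v$.

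The observation you do make — that two distinct $w_i$ over the same vertex outside $\Lambda_L$ cannot be joined by an edge because an edge would project to a loop — is a special case of the same local-injectivity argument, but you deploy it only to show the projections $\bar w_i$ are distinct; that is not the same as showing the whole link is simplicial (there could still be two distinct edges between a genuinely distinct pair $w,w'$, or doubled higher simplices with all vertices in $\hat L$). Conversely, once simpliciality \emph{is} in hand the paper disposes of the flag-filling in a single sentence (the filling cube downstairs lifts), whereas your lifting argument is much more elaborate, relying on the connectedness of $\sigma\smallsetminus\Lambda_L$. That connectedness claim is correct (fullness of $\Lambda_L$ forces $\sigma\cap\Lambda_L$ to be a proper face, and a simplex minus a proper face is connected), and I believe your lift can be made to work, but it is more machinery than the problem needs and it quietly presupposes the very simpliciality you have not established. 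The fix is to open with the paper's argument by contradiction on doubled simplices, after which your flag-filling step can be compressed substantially.
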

\begin{proof}
The branched covering $\hat{K}$ is a cube complex and we must prove that the link of each vertex is a flag complex. Let $v$ be a vertex of $\hat{K}$ there is a derivative map $b_{\lk(v)}\colon \lk(v, \hat{K})\to \lk(b(v), K)$. We must show that the complex $\lk(v, \hat{K})$ is simplicial. If not, there exist simplices $\sigma, \tau\subset \lk(v, \hat{K})$ which are not equal but have the same boundary. Since $\lk(b(v), K)$ is a simplicial complex we can see that $b_{\lk(v)}(\sigma) = b_{\lk(v)}(\tau)$. 

A vertex $w$ in $\sigma$ defines an edge at $v$ in $\hat{K}$. If $b(e)\not\subset L$ then the map $\overline{\lk}(e, \hat{K})\to \overline{\lk}(b(e), K)$ is an isomorphism. Since $\lk(e, \hat{K})$ can be identified with $\lk(w, \lk(v, \hat{K}))$. This implies that the map of links $\lk(v , \hat{K}) \to \lk(b(v), K)$ is locally injective at $w \in \lk(v , \hat{K} )$ contradicting $b_{\lk(v)}(\sigma) = b_{\lk(v)}(\tau)$.

This shows that all the vertices of $\sigma$ are contained in $\hat{L}$. But the map $b\colon \hat{L}\to L$ is a covering map, in particular locally injective, which once again contradicts the fact that $b_{\lk(v)}(\sigma) = b_{\lk(v)}(\tau)$.

Once it is simplicial we see that the simplex in $K$ filling the 1-skeleton of $b_{\lk(v)}(\sigma)$ has a corresponding cube. This cube lifts to the cover and fills the 1-skeleton of $\sigma$ in $\lk(v, \hat{K})$. 
\end{proof}

\appendix

\section{Minimal sizeable graphs -- by Giles Gardam}

In this appendix we determine minimal sizeable graphs: we construct a sizeable graph (as defined in Definition~\ref{sizeabledef}) on 24 vertices (Figure~\ref{fig:sizeable24}), and prove that there exists no such graph on 23 or fewer vertices (Theorem~\ref{thm:sizeable24}).
We also determine the smallest sizeable graphs in certain classes: where the 4 subgraphs are all paths (as in \cite{kropholler_thesis}), all cycles, and all cycles defined `arithmetically' as in \cite{lodha_finiteness_2017}.

\begin{unnumberednotation}
    For our sizeable graphs, we will denote the two bipartitions of the vertex sets as $A = A_0 \sqcup A_1$ and $B = B_0 \sqcup B_1$, rather than using $+$ and $-$ as indices.
    The 4 induced subgraphs $\Gamma(A_s, B_t)$ will be referred to as the \emph{defining subgraphs} of $\Gamma$.
\end{unnumberednotation}

First, let us sketch why constructing sizeable graphs is a delicate matter.
On the large scale, a sizeable graph ``looks like'' a 4-cycle: think of shrinking each $A^s$, $B^t$ to a single vertex, and draw an edge to represent an induced subgraph that is connected.
This runs contrary to containing no 4-cycle, which makes constructing such graphs difficult.

Suppose that each $A^s$ and $B^t$ has $n$ edges.
Connectivity of the defining subgraphs requires that they have average degree approximately 2 (so average degree 4 in the whole graph), since the sum of degrees over the $2n$ vertices must be at least $2(2n-1)$.
\begin{prop}
    Let $\Gamma$ be a random bipartite graph on $A \sqcup B$ with $\abs{A} = \abs{B} = 2n$, constructed by including any edge from $A$ to $B$ independently with probability $\frac{2}{n}$.
    Then the expected number of subgraphs of $\Gamma$ isomorphic to the 4-cycle is $(1-\frac{1}{2n})^2 64$.
\end{prop}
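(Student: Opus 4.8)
The plan is to compute this by linearity of expectation over all ``potential'' 4-cycles. The only real work is a short combinatorial bookkeeping argument identifying what a $C_4$-subgraph of a bipartite graph is; after that the computation is immediate.

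\textbf{Step 1: identifying the 4-cycle subgraphs.} Since $\Gamma$ is bipartite on $A\sqcup B$ and a $4$-cycle is an even cycle, any subgraph of $\Gamma$ isomorphic to $C_4$ meets $A$ in exactly two vertices $a_1,a_2$ and $B$ in exactly two vertices $b_1,b_2$; its four edges are then forced to be $a_1b_1$, $b_1a_2$, $a_2b_2$, $b_2a_1$, i.e.\ precisely the four edges of the complete bipartite graph on $\{a_1,a_2,b_1,b_2\}$. Hence I would record the bijection: a $C_4$-subgraph of $\Gamma$ is the same datum as an unordered pair $P=\big(\{a_1,a_2\},\{b_1,b_2\}\big)$ with $\{a_1,a_2\}\subseteq A$ and $\{b_1,b_2\}\subseteq B$, all four of whose joining edges lie in $\Gamma$. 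Distinct pairs give distinct subgraphs, and every $C_4$-subgraph arises exactly once. (Since $K_{2,2}=C_4$, there is also no distinction here between induced and non-induced subgraphs, so this point needs no separate discussion.)

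\textbf{Step 2: linearity of expectation.} For each pair $P$ let $X_P$ be the indicator of the event that all four edges $a_ib_j$, $i,j\in\{1,2\}$, are present in $\Gamma$, and let $N=\sum_P X_P$ be the number of $C_4$-subgraphs. Then $\EE[N]=\sum_P\Pr[X_P=1]$. Because edges are included independently, each with probability $2/n$, and a $4$-cycle has four edges, $\Pr[X_P=1]=(2/n)^4$ for every $P$; and the number of pairs $P$ is $\binom{2n}{2}^2$. Therefore
\[
\EE[N]=\binom{2n}{2}^{2}\Big(\frac{2}{n}\Big)^{4}
= n^{2}(2n-1)^{2}\cdot\frac{16}{n^{4}}
= \frac{16(2n-1)^2}{n^2}
= 64\Big(1-\frac{1}{2n}\Big)^{2},
\]
which is the claimed value.

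\textbf{Expected main obstacle.} There is essentially no analytic difficulty; the one place to be careful is Step~1, namely counting \emph{unlabelled} $C_4$-subgraphs rather than, say, injective graph homomorphisms $C_4\to\Gamma$ or cyclically ordered vertex tuples, which would introduce a spurious factor coming from $\abs{\mathrm{Aut}(C_4)}=8$. Pinning down that ``four vertices, two on each side, with all four $K_{2,2}$-edges present'' describes each subgraph exactly once is what fixes the constant $64$, and I would make sure that identification is stated cleanly before doing the (routine) arithmetic above.
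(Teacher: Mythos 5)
Your proposal is correct and follows exactly the same computation as the paper: count the $\binom{2n}{2}^2$ candidate 4-cycles, multiply by the per-cycle probability $(2/n)^4$, and simplify. The only difference is that you spell out the linearity-of-expectation bookkeeping and the identification of a $C_4$-subgraph with a pair of 2-subsets, which the paper leaves implicit.
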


This underlines the difficulty of constructing such graphs.

\begin{proof}
    The number of possible $4$-cycles is ${\binom{2n}{2}}^2$, and the probability of any given $4$-cycle occurring is $\left(\frac{2}{n}\right)^4$.
\end{proof}

\begin{definition}
    We call a graph \emph{arithmetic} if it can be constructed in the following manner.
    The vertices are divided into 4 sets $A_0, A_1, B_0, B_1$ of equal size $n$, identified with the group $\mathbb{Z} / n$.
    The edges are determined by 8 numbers: $h_{s,t}$ and $k_{s,t}$ for $s, t \in \{0, 1\}$.
    The vertex $a_s^i \in A^s$ is joined by an edge to $b_t^{i+h_{s,t}}$ and $b_t^{i+k_{s,t}}$ in $B^t$.
\end{definition}

Lodha constructed arithmetic sizeable graphs for $n = 11$, that is, on $44$ vertices.
The graph in Figure~\ref{fig:arithmetic_example} is an example for $n = 9$, with the values
\begin{align*}
    h_{0,0} &=  0 & h_{0,1} &= 0 & h_{1,0} &= 0 & h_{1,1} &= -1 \\
    k_{0,0} &= -1 & k_{0,1} &= 2 & k_{1,0} &= 2 & k_{1,1} &= -2 \\
\end{align*}
(the difference in sign is due to orientation of the edges from $A_0$ and towards $A_1$, used for convenience in the proof of Theorem~\ref{thm:arithmetic}).
\begin{thm}
    \label{thm:arithmetic}
    The smallest number of vertices for an arithmetic sizeable graph is 36.
\end{thm}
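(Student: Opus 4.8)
The plan is to parametrise, to show that the vertex count $4n$ forces $n\geq 9$, and to exhibit $n=9$ via Figure~\ref{fig:arithmetic_example}, so that the minimum is $36$. Write $\delta_{s,t}=h_{s,t}-k_{s,t}\in\mathbb{Z}/n$. Each defining subgraph $\Gamma(A^s,B^t)$ is $2$-regular bipartite, hence a disjoint union of cycles, and it is a single $2n$-cycle---so connected---exactly when $\gcd(\delta_{s,t},n)=1$. For $n=1$ the whole graph is a $4$-cycle, so we may assume $n\geq 2$; then connectivity forces $h_{s,t}\neq k_{s,t}$, so every vertex of $\Gamma$ has degree exactly $4$. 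I would also record the elementary reformulation of ``no $4$-cycle'': a pair of vertices lying in $A^s$ and $A^{s'}$ at displacement $d$ has a common neighbour in $B^t$ iff $d$ occurs in the size-$\leq 4$ multiset $\{h_{s,t},k_{s,t}\}-\{h_{s',t},k_{s',t}\}$, and a $4$-cycle is precisely a pair of vertices with two common neighbours.

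The first step is the bound $n\geq 7$, by a double count. Each of the $2n$ vertices of $B$ has degree $4$, so $B$ supports $\binom{4}{2}\cdot 2n=12n$ ``cherries'' (paths of length $2$ centred in $B$); each cherry is a pair of $A$-vertices together with a common neighbour, and the absence of $4$-cycles means each of the $\binom{2n}{2}$ pairs supports at most one cherry. Hence $12n\leq\binom{2n}{2}$, i.e.\ $n\geq 7$.

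The second step rules out $n=7$ and $n=8$ by refining this count with the arithmetic structure. Two vertices of $A^s$ have a common neighbour in $B^t$ exactly when their displacement is $\pm\delta_{s,t}$, so exactly $n$ pairs inside $A^s$ have a common neighbour in $B^t$; since no pair may have two common neighbours, the displacement sets $\{\pm\delta_{s,0}\}$ and $\{\pm\delta_{s,1}\}$ are disjoint. For $n=7$ this leaves at least $2\bigl(\binom{7}{2}-14\bigr)=14$ pairs of $A$-vertices with no common neighbour, whereas the global count of Step~1 forces exactly $\binom{14}{2}-12\cdot 7=7$ such pairs---a contradiction. For $n=8$ the same bookkeeping makes both inequalities tight, so every ``cross'' pair (one vertex in each of $A^0$ and $A^1$) has exactly one common neighbour. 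Unwinding the reformulation, this says $M^0$ and $M^1$ are $4$-element subsets of $\mathbb{Z}/8$ with $M^0\sqcup M^1=\mathbb{Z}/8$, where $M^t=\{h_{0,t},k_{0,t}\}-\{h_{1,t},k_{1,t}\}$ is, up to translation, the sumset $\{0,\delta_{0,t}\}+\{0,-\delta_{1,t}\}$. Being a $4$-set forces $\delta_{0,t}\neq\pm\delta_{1,t}$, and since each $\delta_{s,t}$ is odd (coprime to $8$), one of $\delta_{0,t},\delta_{1,t}$ is $\equiv\pm1$ and the other $\equiv\pm3\pmod 8$; hence $M^t$ is a translate of $\{0,1\}+\{0,3\}=\{0,1,3,4\}$. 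But $\{0,1,3,4\}$ does not tile $\mathbb{Z}/8$ by two translates: its complement $\{2,5,6,7\}$ has a different cyclic gap pattern. So no such $M^0,M^1$ exist, and $n=8$ is impossible.

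The last step is to verify that the graph of Figure~\ref{fig:arithmetic_example} is sizeable: its four values $\delta_{s,t}$ are $\pm1$ and $\pm2$, all coprime to $9$, so the four defining subgraphs are $18$-cycles and in particular connected, and a short check of the displacement relations above shows no $4$-cycle occurs. Thus the minimal arithmetic sizeable graph has $4\cdot 9=36$ vertices. I expect $n=8$ to be the main obstacle: there the double count is exactly tight and therefore inconclusive on its own, and one must feed in the extra structural fact that the relevant difference sets are forced to be translates of a single $4$-set which fails to tile $\mathbb{Z}/8$.
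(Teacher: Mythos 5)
Your proof is correct, and it takes a genuinely different route from the paper's. The paper gets the lower bound $n\geq 8$ directly: from a fixed $a_0\in A_0$ there are $8$ length-$2$ paths into $A_1$, and no two may share an endpoint (else a $4$-cycle), so $\abs{A_1}\geq 8$. You instead use a global cherry count $12n\leq\binom{2n}{2}$, which only gives $n\geq 7$ and leaves you two cases to kill rather than one; but it is a more ``extremal graph theory''-flavoured argument, in the same spirit as the Zarankiewicz bounds used later in the appendix. Your exclusion of $n=7$ (comparing the global deficit of $7$ pairs lacking a common neighbour against the forced $14$ such pairs living inside $A^0\cup A^1$) is clean and has no analogue in the paper. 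For $n=8$ you and the paper both reduce to the same underlying structure: the eight ``reachable'' displacements $M^0\cup M^1$, where each $M^t$ is, after translating, a sumset $\{0,\delta_{0,t}\}+\{0,-\delta_{1,t}\}$ of two odd elements of $\mathbb{Z}/8$ that are distinct up to sign. The paper finishes with a parity/sum computation ($2\sum \equiv 4$ but $\sum \equiv 0\pmod 8$); you finish by noting both $M^t$ are forced to be translates of $\{0,1,3,4\}$, which does not tile $\mathbb{Z}/8$ by two translates since its complement $\{2,5,6,7\}$ has a different cyclic gap pattern. Both endgames are short; yours is arguably more structural while the paper's is more computational. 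Everything checks out, including the preliminary observations that connectivity of each defining subgraph is equivalent to $\gcd(\delta_{s,t},n)=1$ and forces $h_{s,t}\neq k_{s,t}$ (hence $4$-regularity, which underpins the cherry count).
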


\begin{proof}
    Our example shows that 36 is possible; we now show that less than 36 is not.
    Consider an arithmetic graph.
    For any vertex $a_0 \in A_0$, there are 8 paths of length 2 from $a_0$ to a vertex $a_1 \in A_1$: such a path can go via $B_0$ or $B_1$, and there are two choices of a neighbour $b$ of $a_0$ in each corresponding $B^t$, and $b$ then has 2 neighbours in $A_1$.
    If any of these paths had the same endpoint, this would give a 4-cycle.
    Thus $A_1$ must have at least 8 vertices, so the whole graph must have at least $32$.

    Suppose for the sake of contradiction that we had an arithmetic sizeable graph on $32$ vertices, that is, for $n = 8$.
    After a change of coordinates, we can assume that $h_{0,0} = h_{1,0} = h_{0,1} = 0$.
    To be precise, we relabel the vertex $b_0^i$ with $b_0^{i-h_{0,0}}$, $b_1^i$ with $b_1^{i-h_{0,1}}$, and $a_1$ with $a_1^{i-h_{0,0}+h_{1,0}}$ and adjust the values of $h_{s,t}$ and $k_{s,t}$ accordingly afterwards.
    For notational convenience, as indicated in Figure~\ref{fig:arithmetic_generic} we let $a = k_{0,0}$, $b = -{k_{1,0}}$, $c = k_{0,1}$, $d = -h_{1,1}$ and $e = -k_{1,1}$.
    \begin{figure}
        \centering
        \begin{subfigure}[t]{0.45\textwidth}
            \centering
            \def\svgwidth{\textwidth}
            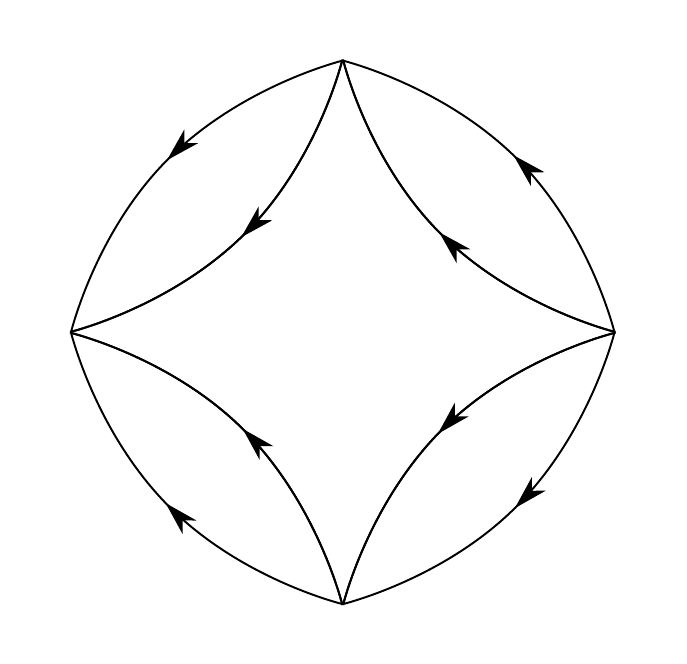
            \caption{A generic arithmetic graph}
            \label{fig:arithmetic_generic}
        \end{subfigure}
        \hfill
        \begin{subfigure}[t]{0.45\textwidth}
            \centering
            \def\svgwidth{\textwidth}
            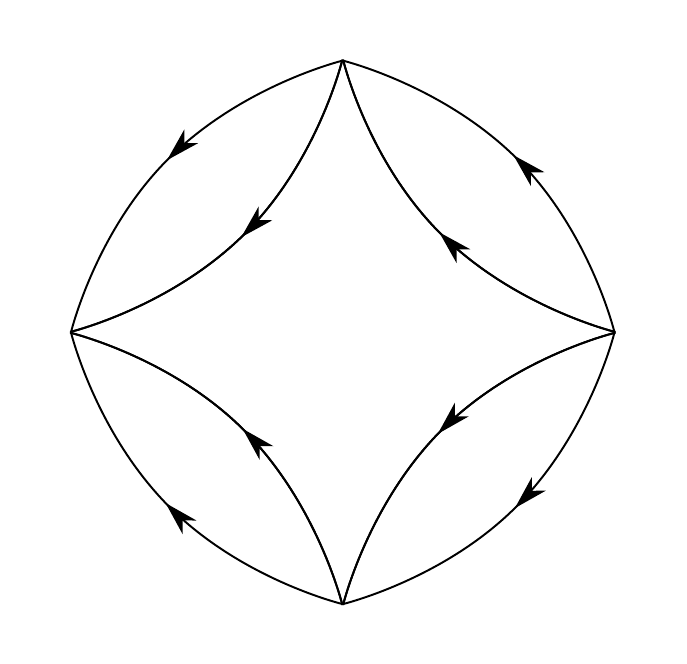
            \caption{A sizeable arithmetic graph for $n = 9$}
            \label{fig:arithmetic_example}
        \end{subfigure}
        \caption{Arithmetic graphs}
    \end{figure}

    The 8 paths of length 2 from $a_0^0$ to $A_1$ end at the vertices $a_1^i$ for $i = 0, a, b, a+b, d, c+d, e, c+e$, and thus these are a permutation of $0, \dots, 7 \pmod{8}$.
    Summing these 8 numbers, we have \[
        2(a + b + c + d + e) \equiv 28 \equiv 4 \pmod{8}
    \]
    However, for the 4 defining subgraphs to be connected (and not to split as 2 or more disjoint cycles) we need $a$, $b$ and $c$ to be odd, and $d$ and $e$ to be of opposite parity, so without loss of generality assume $d$ is even and $e$ is odd.
    Now the 4 endpoints $a_1^i$ for $i = a$, $b$, $c + d$ and $e$ must be a permutation of $1, 3, 5, 7$, so that \[
        a + b + c + d + e \equiv 1 + 3 + 5 + 7 \equiv 0 \pmod{8}.
    \] This is a contradiction.
\end{proof}

For the lower bound of 32, all we used was the assumption that in each of the 4 defining subgraphs, every vertex has degree 2.
For instance, this is true if the subgraphs are (not necessarily arithmetically defined) cycles.
By exhaustive computational search, we have determined there is no sizeable graph of 32 vertices of this type either, so a minimal sizeable graph with all defining subgraphs cycles is our arithmetic example on 36 vertices.

An immediate observation is that we can assume the defining subgraphs of any sizeable graph are trees: the definition requires only that they be connected, but whenever one is not a tree, we could remove an edge from a cycle and obtain a sizeable graph on the same number of vertices (since we clearly will not have introduced a 4-cycle, nor affected connectivity of subgraphs) but with one less edge.
So it is natural to ask how small we can make a sizeable graph where the 4 defining subgraphs are not cycles, but rather paths.
In fact, intuition suggests that one could well construct sizeable graphs of the absolute minimum number of vertices in this way, because one anticipates that the branching of any tree that is not a path creates central, highly connected vertices that would serve to create 4-cycles.

However, with the 4 defining subgraphs as paths we cannot achieve the minimum of 24, or even very close.
Most of this gap is explained by the fact that having the total number of edges in each subgraph more \emph{evenly distributed} over the degrees of the vertices (which have degree 2 except for in 2 cases) means that every vertex in $B$ joins sufficiently many pairs in $A_0 \times A_1$ that unless there are 28 vertices, creating a 4-cycle is unavoidable by the pigeonhole principle.
We now prove this.

\begin{prop}
    \label{prop:path28}
    Any sizeable graph with all 4 defining subgraphs a path has at least 28 vertices.
\end{prop}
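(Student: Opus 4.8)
The plan is to carry out the pigeonhole count sketched in the paragraph above: for the ordered pairs $(a,a') \in A_0 \times A_1$, compare the trivial upper bound $a_0 a_1$ on the number of such pairs admitting a common neighbour in $B$ (a pair with two common neighbours would close a $4$-cycle) against a lower bound obtained by counting, for each $b \in B$, the $\deg_{A_0}(b)\cdot\deg_{A_1}(b)$ pairs it ``joins''. Write $a_s = \abs{A_s}$ and $b_t = \abs{B_t}$. Since every defining subgraph $\Gamma(A_s\sqcup B_t)$ is a bipartite path with parts $A_s$ and $B_t$, it has $a_s + b_t - 1$ edges and $\abs{a_s - b_t}\le 1$; we may assume all four parts are nonempty, the contrary being easily excluded.

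The first step is to show that every part has size at least $4$. For $b\in B$ write $d_0(b)$ for the number of neighbours of $b$ in $A_0$; this is precisely the degree of $b$ in the defining path $\Gamma(A_0\sqcup B_t)$, where $B_t$ is the part containing $b$, so $d_0(b)\in\{1,2\}$. Inspecting the three cases $b_t\in\{a_0-1,a_0,a_0+1\}$ shows that in the path $\Gamma(A_0\sqcup B_t)$ exactly $a_0-1$ of the $B_t$-vertices have degree $2$; summing over $t$ gives the exact count $\#\{b\in B: d_0(b)=2\}=2a_0-2$. Since $\Gamma$ has no $4$-cycle, any two vertices of $A_0$ have at most one common neighbour, so $2a_0-2=\sum_{b}\binom{d_0(b)}{2}\le\binom{a_0}{2}$, which rearranges to $(a_0-1)(a_0-4)\ge 0$ and hence $a_0\in\{1\}\cup\{4,5,\dots\}$. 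If $a_0=1$, then $\abs{a_0-b_t}\le1$ forces $b_t\le 2$, while the same argument applied to $B_t$ forces $b_t\in\{1\}\cup\{4,\dots\}$, so $b_0=b_1=1$ and then $a_1=1$ too; but then $\Gamma$ is the $4$-cycle on these four vertices, contradicting sizeability. Hence $a_0\ge 4$, and symmetrically $a_1,b_0,b_1\ge 4$.

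The second step is the main count. Let $d_1(b)$ be the number of neighbours of $b$ in $A_1$, again in $\{1,2\}$. Absence of $4$-cycles gives $\sum_{b\in B}d_0(b)d_1(b)\le a_0a_1$. Writing $xy=(x+y-1)+(x-1)(y-1)$ and noting $(x-1)(y-1)\in\{0,1\}$ for $x,y\in\{1,2\}$, the left-hand sum equals $\bigl(\sum_b d_0(b)\bigr)+\bigl(\sum_b d_1(b)\bigr)-\abs{B}+\#\{b: d_0(b)=d_1(b)=2\}$. The three edge-count sums are $2a_0+b_0+b_1-2$, $2a_1+b_0+b_1-2$ and $b_0+b_1$; the last term is bounded below, by inclusion--exclusion and the exact counts from Step~1, by $(2a_0-2)+(2a_1-2)-(b_0+b_1)$, which is positive because $b_0+b_1\le 2\min(a_0,a_1)+2\le a_0+a_1+2$ and $a_0+a_1\ge 8$. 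Substituting, the terms in $b_0,b_1$ cancel and one is left with $a_0a_1\ge 4a_0+4a_1-8$, i.e.\ $(a_0-4)(a_1-4)\ge 8$. The identical argument with the roles of $A$ and $B$ reversed yields $(b_0-4)(b_1-4)\ge 8$. Since for integers $x,y\ge 4$ the inequality $(x-4)(y-4)\ge 8$ forces $x+y\ge 14$ (the only candidates with $x+y\le 13$, namely $(5,8)$ and $(6,7)$, give products $4$ and $6$), we get $a_0+a_1\ge 14$ and $b_0+b_1\ge 14$, so $\Gamma$ has at least $28$ vertices.

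The routine estimates aside, the step demanding the most care is the first one: one must use the \emph{path} hypothesis (not merely that the subgraphs are trees) to pin down the count $\#\{b:d_0(b)=2\}=2a_0-2$ exactly, and then clear away the degenerate small configurations so as to conclude that all four parts have at least $4$ vertices. This last fact is what is really needed downstream, since it is exactly what keeps the inclusion--exclusion error term $2a_0+2a_1-b_0-b_1-4$ nonnegative, allowing the pigeonhole inequality to be pushed to the clean form $(a_0-4)(a_1-4)\ge 8$.
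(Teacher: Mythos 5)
Your proof is correct, and it follows the same pigeonhole strategy as the paper: bound the number of $A_0 \times A_1$ pairs joined through $B$ above by $a_0 a_1$ (no $4$-cycles) and below by $4(a_0+a_1)-8$, deduce $(a_0-4)(a_1-4)\ge 8$, and symmetrically for $B$. The two places where you deviate are worth pointing out. First, the paper derives the lower bound on $\sum_{b}d_0(b)d_1(b)$ by splitting into $B_0$ and $B_1$ and appealing to the rearrangement inequality (with a parenthetical caveat about the case of ``too many 1's''); you instead work over all of $B$ at once via the identity $xy=(x+y-1)+(x-1)(y-1)$ and an inclusion--exclusion bound on $\#\{b:d_0(b)=d_1(b)=2\}$. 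These yield the same inequality, so this is a different bookkeeping of the same computation rather than a different idea. Second, and more substantively, your Step~1 establishing $a_0,a_1,b_0,b_1\ge 4$ (via $2a_0-2\le\binom{a_0}{2}$, then ruling out $a_0=1$) is a genuine addition: the paper jumps directly from $(a_0-4)(a_1-4)\ge 8$ to $a_0+a_1\ge 14$, which as stated is not valid for arbitrary natural numbers since $a_0=a_1=1$ also satisfies the product inequality, and this degenerate case needs to be excluded before the pigeonhole count forces the sum to be at least $14$. Your Step~1 closes this small gap. One small quibble: your closing remark attributes the need for Step~1 to keeping the inclusion--exclusion error term nonnegative, but that is not where it is used. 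The substitution $\#\{d_0=d_1=2\}\ge (2a_0-2)+(2a_1-2)-(b_0+b_1)$ is a valid lower bound regardless of sign, so the clean inequality $(a_0-4)(a_1-4)\ge 8$ comes out either way; what Step~1 is actually needed for is the final deduction that this product inequality forces $a_0+a_1\ge 14$, which fails without the floor of $4$ on each part.
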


\begin{proof}
    Let the $n = \abs{B_0}$ vertices in $B_0$ be labelled $1, 2, \dots n$ and let $d_i$ denote the degree of vertex $i$ in the subgraph $\Gamma(A_0, B_0)$ and let $e_i$ be its degree in $\Gamma(A_1, B_0)$.
    Since the defining subgraphs are paths, every $d_i$ and every $e_i$ is either $1$ or $2$.
    The sum of degrees over one side of a bipartite graph is the number of edges, so $\sum_{i=1}^n d_i = \abs{A_0} + \abs{B_0} - 1$ and it follows that the number of $d_i$ that are equal to $1$ is $\abs{B_0} + 1 - \abs{A_0}$ (which is either 0, 1, or 2, noting that in a bipartite path graph the two sides have size differing by at most 1).
    Similarly, precisely $\abs{B_0} + 1 - \abs{A_1}$ of the $e_i$ are equal to $1$.

    The number of pairs in $A_0 \times A_1$ that are joined by a vertex in $B_0$ is $\sum_{i=1}^n d_i e_i$.
    By the rearrangement inequality, this is bounded below by the value for when we never have $d_i = e_i = 1$, that is, when there are $\abs{B_0} + 1 - \abs{A_0}$ pairs $(d_i, e_i) = (2, 1)$, $\abs{B_0} + 1 - \abs{A_1}$ pairs $(d_i, e_i) = (1, 2)$, and the remaining pairs are $(2, 2)$ (if such an arrangement is not possible due to there being too many 1's, the following bound still holds).
    Thus we have \[
        \sum_{i=1}^n d_i e_i \geq 4n - 2 ( 2\abs{B_0} + 2 - \abs{A_0} - \abs{A_1}) = 2 (\abs{A_0} + \abs{A_1} - 2 )
    \]
    We have the same lower bound on the number of pairs in $A_0 \times A_1$ joined by a vertex in $B_1$, so since these pairs must be distinct, we have \[
        \abs{A_0} \abs{A_1} \geq 4 (\abs{A_0} + \abs{A_1} - 2)
    \] which gives $(\abs{A_0} - 4) (\abs{A_1} - 4) \geq 8$.
    This is possible (for natural numbers) only if $\abs{A_0} + \abs{A_1} \geq 14$.
    We similarly conclude $\abs{B_0} + \abs{B_1} \geq 14$ so the graph has at least 28 vertices in total.
\end{proof}

\begin{rem}
    By exhaustive computational search, we know that actually the minimum size we can attain is 31 vertices.
\end{rem}

\begin{thm}
    \label{thm:sizeable24}
    The minimal number of vertices of a sizeable graph is 24.
\end{thm}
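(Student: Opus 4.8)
The statement splits into a construction (a sizeable graph on $24$ vertices) and a non-existence bound (no sizeable graph on $\le 23$ vertices). For the construction I would exhibit the graph $\Gamma$ of Figure~\ref{fig:sizeable24} and verify the three clauses of Definition~\ref{sizeabledef} directly: that $\Gamma$ is bipartite on the indicated $A\sqcup B$, that it has no circuit of length $4$, and that each of the four defining subgraphs $\Gamma(A_s,B_t)$ is connected. This is a finite check.

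For the bound I would first reduce to a clean situation. As observed just before Proposition~\ref{prop:path28}, deleting an edge lying on a cycle of some defining subgraph leaves all four subgraphs connected and introduces no $4$-cycle, so it suffices to rule out a sizeable graph $\Gamma$ on $N\le 23$ vertices all four of whose defining subgraphs are trees. Write $\alpha_s=\abs{A_s}$, $\beta_t=\abs{B_t}$, $p=\alpha_0+\alpha_1$, $q=\beta_0+\beta_1$, so $N=p+q$. Since the edge sets of the $\Gamma(A_s,B_t)$ partition $E(\Gamma)$ and each is a tree, $\abs{E(\Gamma)}=\sum_{s,t}(\alpha_s+\beta_t-1)=2N-4$; and every vertex has degree $\ge 2$, since a vertex of (say) $A_0$ lies in both $\Gamma(A_0,B_0)$ and $\Gamma(A_0,B_1)$ and hence has a neighbour in each of $B_0,B_1$.

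The absence of a $4$-cycle says that no two vertices on one side have two common neighbours, so counting paths of length two gives
\[
  \sum_{a\in A}\binom{\deg a}{2}\le\binom{q}{2},\qquad \sum_{b\in B}\binom{\deg b}{2}\le\binom{p}{2}.
\]
Applying Jensen's inequality to the convex function $x\mapsto\binom{x}{2}$, with degree sum $2N-4$ over $p$ (resp.\ $q$) vertices, turns these into
\[
  p\,q(q-1)\ge(2N-4)(p+2q-4),\qquad q\,p(p-1)\ge(2N-4)(q+2p-4).
\]
A short manipulation forces $p\ge 11$ and $q\ge 11$ (for instance $p=10$ makes the second inequality equivalent to $q^2-21q+128\le 0$, which has no real solution), so with $N\le 23$ only $(p,q)\in\{(11,11),(11,12),(12,11)\}$ survives. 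One also gets $\alpha_s,\beta_t\ge 4$ for each index, by comparing the number of length-two paths inside a single part with $\binom{\alpha_s}{2}$ and using the tree edge counts; this pins down the admissible internal partition sizes to a handful.

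It remains to eliminate the two vertex counts $N=22$ and $N=23$, and this is where the real work lies, precisely because the inequalities above are so nearly tight. At these parameters the degree sum forces the degree sequence on at least one side to lie within $1$ of the extremal pattern ``a few $3$'s and the rest $4$'s'', and then the corresponding cherry inequality fails to be an equality for at most one pair; so almost every pair of vertices on that side has a \emph{unique} common neighbour on the other, i.e.\ $\Gamma$ would be, up to a single missing incidence, the bipartite incidence graph of a projective-plane-like design on $11$ or $12$ points, which at these parameters cannot occur -- still less once one also imposes that the bipartitions $A_0\sqcup A_1$ and $B_0\sqcup B_1$ be transverse enough to keep all four $\Gamma(A^s\sqcup B^t)$ connected. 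Turning this rigidity into a clean contradiction is the crux; where the case analysis becomes unwieldy it can be completed by an exhaustive search over the (now very small) family of candidate degree sequences and edge sets. Together with the example of Figure~\ref{fig:sizeable24}, this shows the minimum is exactly $24$.
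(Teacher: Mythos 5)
Your overall frame matches the paper's: exhibit the 24-vertex example, reduce to defining subgraphs being trees so $\abs{E(\Gamma)}=2N-4$, and then treat the lower bound as a Zarankiewicz-type question about $K_{2,2}$-free bipartite graphs. Your cherry-counting inequality and the Jensen step are correct, and they do force $p,q\geq 11$. But there are two genuine gaps past that point, and together they constitute the heart of the theorem.

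First, your counting argument does not actually rule out $N=22$. With $(p,q)=(11,11)$ and $N=22$ one has $2N-4=40$, and your inequalities read $1210\geq 1160$: they are satisfied. The Jensen bound on the edge count of a $K_{2,2}$-free bipartite graph on $(11,11)$ gives $E\leq 40$, which allows exactly $40$; but the true Zarankiewicz number is $Z_{2,2}(11,11)=39<40$. The paper gets $N\geq 23$ (Corollary~\ref{cor:at_least_23}) by citing the exact tabulated Zarankiewicz values rather than a smoothed bound, and this one-unit improvement is essential. Your argument only reproduces the paper's weaker ``at least 22'' proposition.

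Second, and more seriously, your elimination of $N=23$ is a sketch, not a proof. The paper uses a specific stability theorem (Corollary~\ref{cor:key_cor}, quoted from the Zarankiewicz reference) which says that a $K_{2,2}$-free bipartite graph on $(11,12)$ achieving the extremal edge count $42$ must embed in the incidence graph of the projective plane of order $3$. It then runs a careful structural argument: an incidence count forces $(\abs{A_0},\abs{A_1})=(5,6)$ and forbids any line from meeting both $A_0$ and $A_1$ twice; a parity argument at the two points at infinity pins down the bipartition of $A$ essentially uniquely; pruning leaf vertices reduces to the affine plane of order $2$, where the required bipartition of lines is forced and then one of the two induced subgraphs is visibly disconnected. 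Your proposal gestures at ``a projective-plane-like design...which at these parameters cannot occur'' and then defers to ``exhaustive search'' -- but you give neither the stability input that makes the search space small nor the argument that concludes from it. Since you acknowledge this is the crux, the proposal as written does not establish the theorem; it reproduces the setup and the part the paper itself calls the easy lower bound. One smaller issue: your claim that $\alpha_s,\beta_t\geq 4$ follows from comparing path counts with tree edge counts is not justified as stated (and in the paper's eventual analysis one in fact has $\abs{A_0}=5$, $\abs{A_1}=6$, obtained only after the projective-plane embedding).
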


\begin{figure}
\centering
\begin{subfigure}[b]{.7\textwidth}
\includegraphics[width=\textwidth]{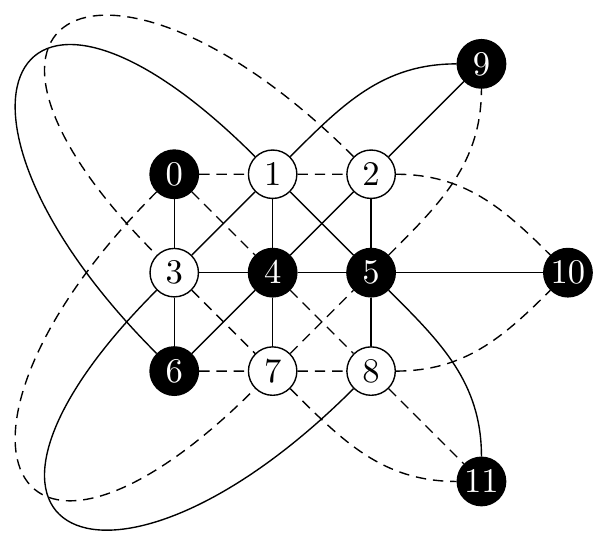}
\vspace{6ex} 
\end{subfigure}\quad
\begin{subfigure}[b]{.23\textwidth}

\includegraphics[width=\textwidth]{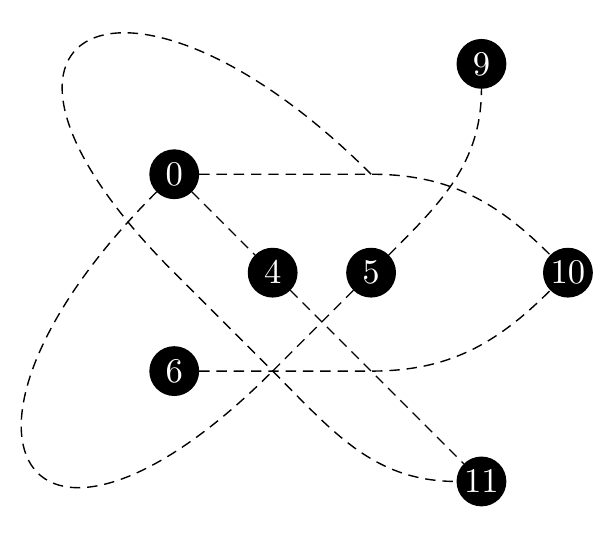}
\includegraphics[width=\textwidth]{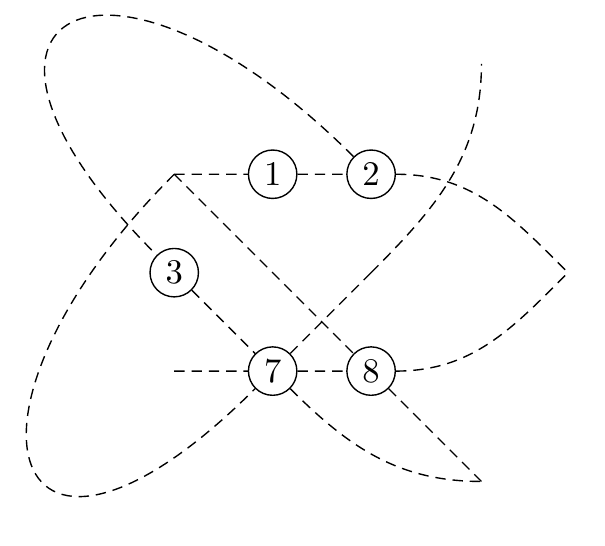}
\includegraphics[width=\textwidth]{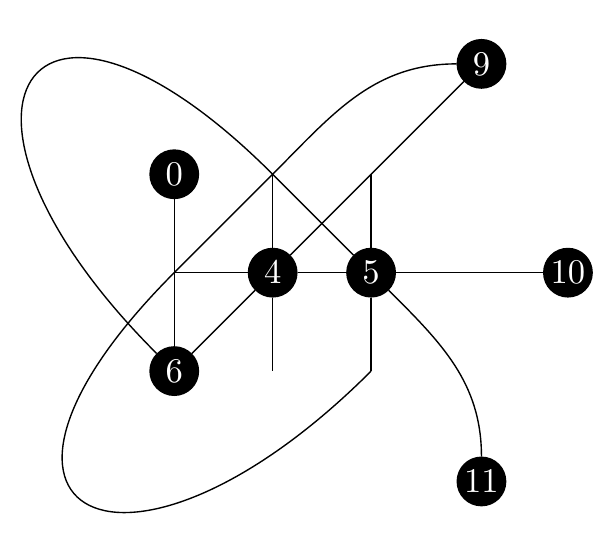}
\includegraphics[width=\textwidth]{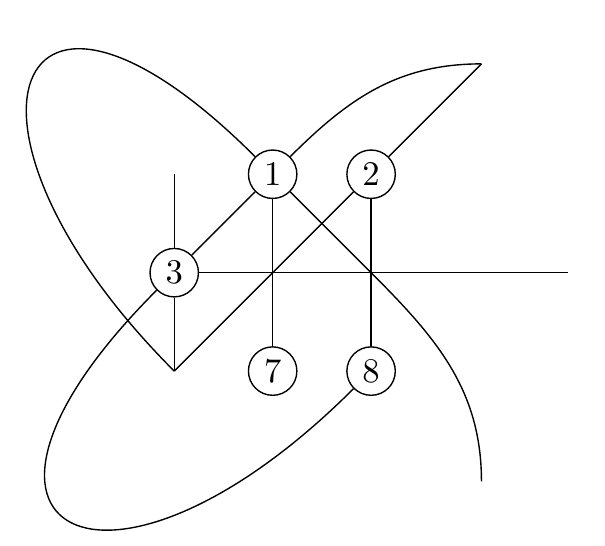}

\end{subfigure}
\caption{A sizeable graph on 24 vertices, depicted as a subgraph of the incidence graph for the projective plane of order 3 (Figure~\ref{fig:full_proj_plane}), shown together with the 4 induced subgraphs}
\label{fig:sizeable24}
\end{figure}

With no topological assumptions on the subgraphs (such as being cycles or paths) the best lower bound we can get with the techniques used to prove Proposition~\ref{prop:path28} is 22, which is close to the actual minimum of 24.

\begin{prop}
    Every sizeable graph has at least 22 vertices.
\end{prop}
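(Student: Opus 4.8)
The plan is to run the double-counting argument behind Proposition~\ref{prop:path28}, but this time without the assumption that the defining subgraphs are paths (which there forced every degree in a defining subgraph to be at most $2$, making the rearrangement inequality available). Write $a_s=\abs{A_s}$ and $b_t=\abs{B_t}$, and assume, as we may, that each of $A_0,A_1,B_0,B_1$ is nonempty. For a vertex $v\in B_t$, let $d_v$ be the degree of $v$ in the defining subgraph $\Gamma(A_0,B_t)$ and $e_v$ its degree in $\Gamma(A_1,B_t)$; then $d_v e_v$ is the number of length-two paths $x-v-y$ with $x\in A_0$ and $y\in A_1$. Since $\Gamma$ has no $4$-cycle, no pair $(x,y)\in A_0\times A_1$ can be joined through two distinct vertices of $B=B_0\sqcup B_1$, so the pairs obtained in this way are all distinct and
\[
\sum_{v\in B_0} d_v e_v + \sum_{v\in B_1} d_v e_v \;\le\; a_0 a_1 .
\]

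For the matching lower bound I would use connectivity of the defining subgraphs. Connectivity of $\Gamma(A_0,B_t)$ and of $\Gamma(A_1,B_t)$ forces $d_v,e_v\ge 1$ for each $v\in B_t$, hence $d_v e_v\ge d_v+e_v-1$. Since $\sum_{v\in B_t} d_v$ is the number of edges of the connected graph $\Gamma(A_0,B_t)$ on $a_0+b_t$ vertices, it is at least $a_0+b_t-1$, and likewise $\sum_{v\in B_t} e_v\ge a_1+b_t-1$. Summing the inequality $d_v e_v\ge d_v+e_v-1$ over $v\in B_t$ gives
\[
\sum_{v\in B_t} d_v e_v \;\ge\; (a_0+b_t-1)+(a_1+b_t-1)-b_t \;=\; a_0+a_1+b_t-2 ,
\]
and adding the cases $t=0,1$ and comparing with the first display yields $a_0 a_1\ge 2(a_0+a_1)+(b_0+b_1)-4$. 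The identical argument with the roles of $A$ and $B$ interchanged gives $b_0 b_1\ge 2(b_0+b_1)+(a_0+a_1)-4$.

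To finish, set $\alpha=a_0+a_1$ and $\beta=b_0+b_1$, so that $\abs{V(\Gamma)}=\alpha+\beta$ and $\alpha,\beta\ge 2$. Using $a_0 a_1\le\lfloor\alpha^2/4\rfloor$ and $b_0 b_1\le\lfloor\beta^2/4\rfloor$, the two inequalities become
\[
\lfloor\alpha^2/4\rfloor \;\ge\; 2\alpha+\beta-4 \qquad\text{and}\qquad \lfloor\beta^2/4\rfloor \;\ge\; 2\beta+\alpha-4 .
\]
Assuming $\alpha\le\beta$ without loss of generality, the first inequality gives $\beta\le\lfloor\alpha^2/4\rfloor-2\alpha+4$; but for $2\le\alpha\le 10$ one checks directly that $\lfloor\alpha^2/4\rfloor<3\alpha-4$, which would force $\beta<\alpha$, a contradiction. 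Hence $\alpha\ge 11$, and therefore $\abs{V(\Gamma)}=\alpha+\beta\ge 2\alpha\ge 22$. I expect the only mildly delicate points to be the edge-count estimates coming from connectivity and this concluding finite check; the conceptual move is simply that, with the path hypothesis removed, the rearrangement-inequality step of Proposition~\ref{prop:path28} must be replaced by the weaker bound $d_v e_v\ge d_v+e_v-1$, which nonetheless still delivers $22$.
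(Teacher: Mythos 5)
Your proof is correct and follows essentially the same double-counting argument as the paper: the inequality $d_v e_v \geq d_v + e_v - 1$ coming from connectivity, combined with the upper bound $\abs{A_0}\abs{A_1}$ from the no-$4$-cycle hypothesis, yields $a_0 a_1 \geq 2(a_0+a_1) + (b_0+b_1) - 4$ exactly as in the text. The only superficial difference is in the arithmetic finish: after invoking the WLOG $\beta\geq\alpha$, the paper factorizes to $(\abs{A_0}-3)(\abs{A_1}-3)\geq 5$ and reads off $\alpha\geq 11$, whereas you bound $a_0 a_1\leq\lfloor\alpha^2/4\rfloor$ and run a finite check over $2\leq\alpha\leq 10$; both are valid, and your additional $B$-side inequality is stated but not actually used.
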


\begin{proof}
    As in the proof of the above proposition, let the degrees of vertices of $B_0$ be $d_i$ and $e_i$.
    Connectivity requires that $d_i, e_i \geq 1$ and thus $(d_i - 1)(e_i - 1) \geq 0$, so $d_i e_i \geq d_i + e_i - 1$.
    Thus $\sum_{i=1}^n d_i e_i \geq \sum_{i=1}^n d_i + \sum_{i=1}^n e_i - \abs{B_0} = (\abs{A_0} + \abs{B_0} - 1) + (\abs{A_1} + \abs{B_0} - 1) - \abs{B_0} = \abs{A_0} + \abs{A_1} + \abs{B_0} - 2$.

    We have a similar bound for $B_1$, and thus considering the total number of pairs in $A_0 \times A_1$ at distance 2 in the graph metric, we see that \[
        \abs{A_0} \abs{A_1} \geq \abs{A_0} + \abs{A_1} - 4 + (\abs{A_0} + \abs{A_1} + \abs{B_0} + \abs{B_1})
    \] We can assume without loss of generality that $\abs{B_0} + \abs{B_1} \geq \abs{A_0} + \abs{A_1}$, so after substituting into the above inequality and factorizing we have \[
        (\abs{A_0} - 3) (\abs{A_1} - 3) \geq 5
    \]
    Since $\abs{A_0}, \abs{A_1}$ are positive integers this implies that $\abs{A_0} + \abs{A_1} \geq 11$, so the graph has at least 22 vertices.
\end{proof}

While we can use such a pleasantly simple argument to get a lower bound of 22, to improve the lower bound to 24 we now apply results in extremal graph theory, concerning the \emph{Zarankiewicz problem}.
To do this, we weaken the requirements on our graphs somewhat: we forget the bipartitions of $A$ into $A_0$ and $A_1$ and of $B$ into $B_0$ and $B_1$, and thus do not ask for connectivity of these subgraphs, but rather ask simply that there be at least as many edges as connectivity would require, which is $2N-4$ for a sizeable graph on $N$ vertices.

\begin{definition}[{\cite[Definition 1.2]{zarankiewicz}}]
    A bipartite graph $G = (A, B; E)$ is called $K_{s,t}$-free if it does not contain $s$ vertices in $A$ and $t$ vertices in $B$ that span a subgraph isomorphic to the complete bipartite graph $K_{s,t}$.
    The maximum number of edges that a $K_{s,t}$-free bipartite graph of size $(m,n)$ may have is the \emph{Zarankiewicz number} $Z_{s,t}(m,n)$.
\end{definition}

Even for $(s,t) = (2,2)$ -- the case we are interested in, corresponding to having no 4-cycles -- not all Zarankiewicz numbers are known exactly, but they are known for the range of values relevant to us \cite[Table 1]{zarankiewicz}.
This smallest value of $m+n$ for which $Z_{2,2}(m, n) \geq 2(m+n) - 4$ holds is $23$: $Z_{2,2}(11,12) = 42$ (and is the only possibility with $m+n = 23$ up to swapping $m$ and $n$), whereas for $m+n = 22$ we have
\begin{center}
\begin{tabular}{r | c c c c c}
    $(m,n)$ & $(7,15)$ & $(8,14)$ & $(9,13)$ & $(10,12)$ & $(11, 11)$ \\
    \hline
    $Z_{2,2}(m,n)$ & $33$ & $35$ & $37$ & $39$ & $39$
\end{tabular}
\end{center}

\begin{cor}
    \label{cor:at_least_23}
    A sizeable graph has at least 23 vertices.
\end{cor}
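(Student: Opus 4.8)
The plan is to read off the corollary from three facts already on the table. First, a sizeable graph $\Gamma$ on $N$ vertices is bipartite on $A\sqcup B$ and, by definition, contains no loop of length $4$; in the language of the Zarankiewicz problem this says $\Gamma$ is $K_{2,2}$-free. Second, the four defining subgraphs $\Gamma(A_s,B_t)$ are pairwise edge-disjoint, since every edge joins $A$ to $B$ and hence lies in exactly one of them, and each must be connected; therefore
\[
\#E(\Gamma)\ \ge\ \sum_{s,t\in\{0,1\}}\bigl(|A_s|+|B_t|-1\bigr)\ =\ 2|A|+2|B|-4\ =\ 2N-4,
\]
which is precisely the weakened requirement set up just before the statement. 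Third, the tabulated values of $Z_{2,2}(m,n)$ from \cite[Table 1]{zarankiewicz}.

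First I would put $m=|A|$, $n=|B|$, so that $N=m+n$, and combine the two observations above into $Z_{2,2}(m,n)\ge 2(m+n)-4$. By the preceding Proposition we already know $N\ge 22$, so the whole task reduces to excluding $N=22$, i.e. to checking that $Z_{2,2}(m,n)\le 39$ for every splitting with $m+n=22$. For $7\le m\le n$ this is exactly what the displayed table records. For the lopsided cases $m\le 6$ (so $n\ge 16$), which the table does not list, I would use the elementary cherry count underlying the Kővári--Sós--Turán bound: since no two vertices of $A$ have two common neighbours, $\sum_{b\in B}\binom{\deg b}{2}\le\binom{m}{2}$, and convexity of the binomial then gives $\#E(\Gamma)\le \frac12\bigl(n+\sqrt{n^2+4nm(m-1)}\bigr)$, which for $m\le 6$ and $n=22-m$ is comfortably below $40$. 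In every case $Z_{2,2}(m,n)<40=2\cdot 22-4$, contradicting the inequality above; hence $N\ge 23$.

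The argument is essentially bookkeeping once the Zarankiewicz data is granted, and the only point requiring a little care is making sure \emph{no} bipartition of $22$ slips through, since the table is stated only for the balanced-ish splits. An alternative way to dispose of this is to reuse the normalisation from the previous Proposition: under the assumption $|B|\ge|A|$ it already shows $|A|\ge 11$, so $N=22$ forces $m=n=11$, and then one needs only the single tabulated value $Z_{2,2}(11,11)=39<40$. I expect this indexing check to be the main (and rather mild) obstacle; there is no substantive difficulty beyond it.
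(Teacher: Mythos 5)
Your proof is correct and matches the paper's own (rather terse) argument: reduce to the observation that a sizeable graph on $N$ vertices is a $K_{2,2}$-free bipartite graph with at least $2N-4$ edges, then rule out $N=22$ using the tabulated values of $Z_{2,2}(m,n)$. You are in fact more careful than the paper about the unbalanced splits $m+n=22$ with $m\le 6$, which the quoted table omits; either your K\H{o}v\'ari--S\'os--Tur\'an-style cherry count, or the deduction $\abs{A_0}+\abs{A_1}\ge 11$ (hence $m=n=11$) already established in the 22-vertex proposition, closes that small gap cleanly.
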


A sizeable graph on 23 vertices lies somehow just beyond the cusp of what is possible.
This makes the situation incredibly constrained; we suppose that we were to have a sizeable graph on 23 vertices and progressively determine more and more of its structure, until we zero in on a specific contradiction.
Up to symmetry, there is at each step of the argument only one way for the graph to develop.

\begin{figure*}
    \centering
    \begin{tikzpicture}[
        every node/.style={circle, draw=black, fill=white, text=black, inner sep=0, minimum size=14, thin},
    ]
\coordinate (mySW) at (-2.7, -3.3);
\coordinate (myNE) at (3.3, 2.7);
\clip (mySW) rectangle (myNE);
\useasboundingbox (mySW) rectangle (myNE);

    \path (-1,-1) node (6) {6};
    \path (-1,0) node (3) {3};
    \path (-1,1) node (0) {0};
    \path (0,-1) node (7) {7};
    \path (0,0) node (4) {4};
    \path (0,1) node (1) {1};
    \path (1,-1) node (8) {8};
    \path (1,0) node (5) {5};
    \path (1,1) node (2) {2};
    \path (45:3) node (9) {9};
    \path (0:3) node (10) {10};
    \path (-45:3) node (11) {11};
    \path (-90:3) node (12) {12};

    \draw (0) to (1) to (2) to[out=0,in=135] (10);
    \draw (3) to (4) to (5) to (10);
    \draw (6) to (7) to (8) to[out=0,in=-135] (10);

    \draw (0) to (3) to (6) to[out=-90,in=135] (12);
    \draw (1) to (4) to (7) to (12);
    \draw (2) to (5) to (8) to[out=-90,in=45] (12);

    \draw (6) to[out=135,in=135, distance=100] (1) (1) to (5) (5) to[out=-45,in=90] (11);
    \draw (0) to (4) to (8) to (11);
    \draw (2) to[out=135,in=135, distance=100] (3) (3) to (7) (7) to[out=-45,in=180] (11);

    \draw (0) to[out=-135,in=-135, distance=100] (7) (7) to (5) (5) to[out=45,in=-90] (9);
    \draw (6) to (4) to (2) to (9);
    \draw (8) to[out=-135,in=-135, distance=100] (3) (3) to (1) (1) to[out=45,in=180] (9);

    \draw[bend left=17] (9) to (10) (10) to (11) (11) to (12);
    \end{tikzpicture}
    \caption{The projective plane of order 3 with 13 points and 13 lines. Each pair of points defines a line, and each pair of lines intersects in one point.}
    \label{fig:full_proj_plane}
\end{figure*}
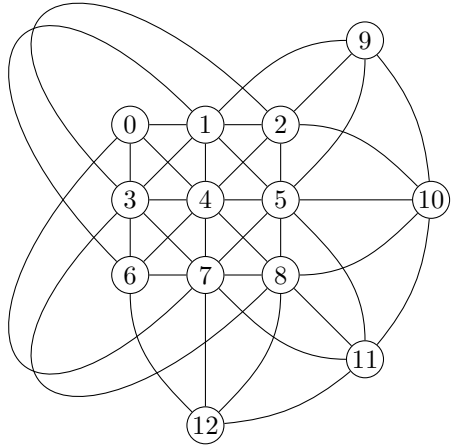

\begin{figure*}
    \centering
    \begin{subfigure}[b]{0.45\textwidth}
        \centering
        \begin{tikzpicture}[
            every node/.style={circle, draw=black, fill=white, text=black, inner sep=0, minimum size=14, thin},
            A1/.style={every node},
            A2/.style={every node},
        ]
\coordinate (mySW) at (-2.7, -3.3);
\coordinate (myNE) at (3.3, 2.7);
\clip (mySW) rectangle (myNE);
\useasboundingbox (mySW) rectangle (myNE);

\path (-1,-1) node[A1] (6) {6};
\path (-1,0) node[A1] (3) {3};
\path (-1,1) node[A1] (0) {0};
\path (0,-1) node[A2] (7) {7};
\path (0,0) node[A2] (4) {4};
\path (0,1) node[A1] (1) {1};
\path (1,-1) node[A2] (8) {8};
\path (1,0) node[A2] (5) {5};
\path (1,1) node[A1] (2) {2};
\path (0:3) node[A2] (10) {10};
\path (-90:3) node[A2] (12) {12};

\draw (0) to (1) to (2) to[out=0,in=135] (10);
\draw (3) to (4) to (5) to (10);
\draw (6) to (7) to (8) to[out=0,in=-135] (10);

\draw (0) to (3) to (6) to[out=-90,in=135] (12);
\draw (1) to (4) to (7) to (12);
\draw (2) to (5) to (8) to[out=-90,in=45] (12);

\draw (6) to[out=135,in=135, distance=100] (1) (1) to (5);
\draw (0) to (4) to (8);
\draw (2) to[out=135,in=135, distance=100] (3) (3) to (7);

\draw (0) to[out=-135,in=-135, distance=100] (7) (7) to (5);
\draw (6) to (4) to (2);
\draw (8) to[out=-135,in=-135, distance=100] (3) (3) to (1);
        \end{tikzpicture}
        \caption{The subgraph of the incidence graph of the projective plane}
        \label{fig:graph23blank}
    \end{subfigure}
    \hfill
    \begin{subfigure}[b]{0.45\textwidth}
        \centering
        \begin{tikzpicture}[
            every node/.style={circle, draw=black, fill=white, text=black, inner sep=0, minimum size=14, thin},
            A1/.style={circle, draw=black, fill=white,text=black, inner sep=0, minimum size=14, thin},
            A2/.style={circle, draw=black, fill=black, text=white, inner sep=0, minimum size=14, thin},
        ]
\coordinate (mySW) at (-2.7, -3.3);
\coordinate (myNE) at (3.3, 2.7);
\clip (mySW) rectangle (myNE);
\useasboundingbox (mySW) rectangle (myNE);

\path (-1,-1) node[A1] (6) {6};
\path (-1,0) node[A1] (3) {3};
\path (-1,1) node[A1] (0) {0};
\path (0,-1) node[A2] (7) {7};
\path (0,0) node[A2] (4) {4};
\path (0,1) node[A1] (1) {1};
\path (1,-1) node[A2] (8) {8};
\path (1,0) node[A2] (5) {5};
\path (1,1) node[A1] (2) {2};
\path (0:3) node[A2] (10) {10};
\path (-90:3) node[A2] (12) {12};

\draw (0) to (1) to (2) to[out=0,in=135] (10);
\draw (3) to (4) to (5) to (10);
\draw (6) to (7) to (8) to[out=0,in=-135] (10);

\draw (0) to (3) to (6) to[out=-90,in=135] (12);
\draw (1) to (4) to (7) to (12);
\draw (2) to (5) to (8) to[out=-90,in=45] (12);

\draw (6) to[out=135,in=135, distance=100] (1) (1) to (5);
\draw (0) to (4) to (8);
\draw (2) to[out=135,in=135, distance=100] (3) (3) to (7);

\draw (0) to[out=-135,in=-135, distance=100] (7) (7) to (5);
\draw (6) to (4) to (2);
\draw (8) to[out=-135,in=-135, distance=100] (3) (3) to (1);
        \end{tikzpicture}
        \caption{With the unique bipartition on points}
        \label{fig:graph23bipartite}
    \end{subfigure}
    \vskip\baselineskip
    \begin{subfigure}[b]{0.45\textwidth}
        \centering
        \begin{tikzpicture}[
            every node/.style={circle, draw=black, fill=white, text=black, inner sep=0, minimum size=14, thin},
            A1/.style={circle, draw=black, fill=white,text=black, inner sep=0, minimum size=14, thin},
            A2/.style={circle, draw=black, fill=black, text=white, inner sep=0, minimum size=14, thin},
        ]
            \coordinate (mySW) at (-2.0, -3.3);
\coordinate (myNE) at (3.3, 0.3);
\clip (mySW) rectangle (myNE);
\useasboundingbox (mySW) rectangle (myNE);

\path (0,-1) node[A2] (7) {7};
\path (0,0) node[A2] (4) {4};
\path (1,-1) node[A2] (8) {8};
\path (1,0) node[A2] (5) {5};
\path (0:3) node[A2] (10) {10};
\path (-90:3) node[A2] (12) {12};

\draw (4) to (5) to (10);
\draw (7) to (8) to[out=0,in=-135] (10);

\draw (4) to (7) to (12);
\draw (5) to (8) to[out=-90,in=45] (12);

\draw (4) to (8);

\draw (7) to (5);
        \end{tikzpicture}
        \caption{Two subgraphs that must be connected}
        \label{fig:graph12}
    \end{subfigure}
    \hfill
    \begin{subfigure}[b]{0.45\textwidth}
        \centering
        \begin{tikzpicture}[
            every node/.style={circle, draw=black, fill=white, text=black, inner sep=0, minimum size=14, thin},
            A1/.style={circle, draw=black, fill=white,text=black, inner sep=0, minimum size=14, thin},
            A2/.style={circle, draw=black, fill=black, text=white, inner sep=0, minimum size=14, thin},
            B1/.style={},
            B2/.style={densely dashed},
        ]
            \coordinate (mySW) at (-2.0, -3.3);
\coordinate (myNE) at (3.3, 0.3);
\clip (mySW) rectangle (myNE);
\useasboundingbox (mySW) rectangle (myNE);

\path (0,-1) node[A2] (7) {7};
\path (0,0) node[A2] (4) {4};
\path (1,-1) node[A2] (8) {8};
\path (1,0) node[A2] (5) {5};

\draw[B1] (4) to (5);
\draw[B2] (7) to (8);

\draw[B1] (4) to (7);
\draw[B2] (5) to (8);

\draw[B1] (4) to (8);
\draw[B2] (7) to (5);
        \end{tikzpicture}
        \caption{The final contradiction}
        \label{fig:graph10}
    \end{subfigure}
    \caption{The stages of the proof by contradiction that there is no sizeable graph on 23 vertices}
    \label{fig:proj_plane_subgraphs}
\end{figure*}
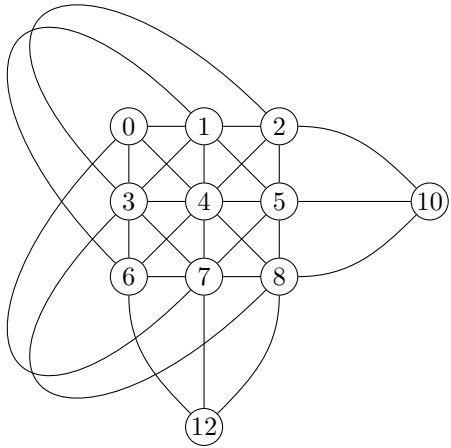
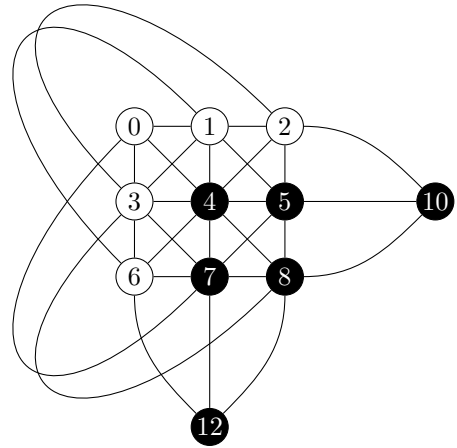
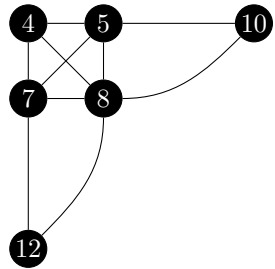
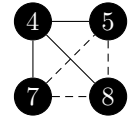

The incidence graph of the projective plane of order 3, Figure~\ref{fig:full_proj_plane}, is a natural candidate for a small sizeable graph.
The projective planes are incidence geometries that are optimal in various senses.
Of particular importance for us is that they are bipartite and have no 4-cycles: a 4-cycle would correspond to a pair of lines that intersected in two distinct points.
The projective plane of order 3 then presents itself since its points and lines all have $3+1=4$ incidences, so that the total number of edges is larger than the minimum needed for connectivity of the 4 induced subgraphs.
Exhaustive computation revealed that the incidence graph of the projective plane of order 3 cannot give a sizeable graph: no bipartition of $A$ and of $B$ will give 4 connected subgraphs.
However, our minimal sizeable graph is a subgraph of this incidence graph; given a sizeable graph on $n$ vertices one cannot necessarily extend it to a sizeable graph on $n+1$ vertices.

We reproduce here the relevant part of Corollary 3.19 of \cite{zarankiewicz}.
This is a stability result, saying that graphs close to having the desired combinatorial prorties of the incidence graph of the projective plane can be embedded in it.
\begin{cor}
    \label{cor:key_cor}
    Let $c \in \mathbb{N}$. Then \[
        Z_{2,2} (n^2 + c, n^2 + n) \leq n^2(n+1) + cn
    \]
    Moreover, if $c \leq n+1$, then graphs reaching the bound can be embedded into a projective plane of order $n$.
\end{cor}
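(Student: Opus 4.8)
The plan is to follow the ``counting plus stability'' route underlying Corollary~3.19 of \cite{zarankiewicz}. Throughout, let $G=(A,B;E)$ be a $K_{2,2}$-free bipartite graph with $|A|=m:=n^2+c$ and $|B|=N:=n^2+n$, write $e:=|E|=\sum_{b\in B}d(b)=\sum_{v\in A}d(v)$, and set $T:=n^2(n+1)+cn$, which one checks equals $nm+n^2=n(m+n)$. Recall that $K_{2,2}$-freeness is equivalent to: any two vertices of $A$ have at most one common neighbour (and likewise for $B$).

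\textbf{Step 1 (cherry counting).} Counting paths $v\text{--}b\text{--}v'$ with $v,v'\in A$ and centre $b\in B$ gives $\sum_{b\in B}\binom{d(b)}{2}\le\binom{m}{2}$; dually $\sum_{v\in A}\binom{d(v)}{2}\le\binom{N}{2}$. Convexity of $x\mapsto\binom{x}{2}$ turns each into an upper bound on $e$: with $e=qN+r$, $0\le r<N$, the first forces $q\bigl(N(q-1)+2r\bigr)\le m(m-1)$, hence $e\le\tfrac12 N(q+1)+\tfrac1{2q}m(m-1)$ for $q=\lfloor e/N\rfloor$, and symmetrically for $A$. Once $|A|$ is substantially larger than $|B|$ (i.e.\ $c$ large) the dual bound already gives $e<T$ with room to spare, so the delicate range is $c\le n+1$; there, substituting $q=n$ (or $q=n+1$ when $c=n+1$) yields only $e\le T+O(n)$ --- elementary cherry counting provably loses an additive $\Theta(n)$ exactly in the projective-plane regime.

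\textbf{Step 2 (removing the error term) and Step 3 (embeddability).} To sharpen $e\le T+O(n)$ to $e\le T$, first show that $e\ge T$ forces both degree sequences to lie in $\{n,n+1\}$: a vertex of $B$ of degree $\ge n+2$ squanders too much of the cherry budget $\binom m2$ (moving a unit of degree from a high-degree to a low-degree vertex strictly decreases $\sum\binom{d}{2}$), while a low-degree vertex overloads the rest. With the degree sequences pinned down, a refined double count --- tracking the number $t$ of degree-$(n+1)$ vertices of $B$ and using $\sum_b d(b)=\sum_v d(v)$ --- shows $t$ cannot exceed the value realised by the projective-plane construction, giving $Z_{2,2}(n^2+c,n^2+n)\le T$. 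For the ``moreover'' statement, suppose $e=T$ and $c\le n+1$: the sets $\mathcal L=\{\,N(b):b\in B,\ |N(b)|\ge 2\,\}$ form a partial linear space on the $m$ points of $A$ which the equality analysis forces to be almost complete (each point on $n$ or $n+1$ lines, each line of size $n$ or $n+1$, all but a bounded number of point-pairs covered); adjoining the $n+1-c$ missing points (reaching $n^2+n+1$) and completing the missing lines so that any two points share a line and any two lines meet produces a projective plane of order $n$, into whose incidence graph $G$ then embeds.

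\textbf{Main obstacle.} The hard part is Step~2 together with the existence of the completion in Step~3. Because cherry counting is off by $\Theta(n)$ precisely near $c=n$, one must exploit $K_{2,2}$-freeness structurally rather than only through its cherry-count consequence: analyse the ``defect'' partial linear space of uncovered point-pairs and show it is too rigid to coexist with a near-extremal edge count unless $G$ already sits inside a projective plane. Supplying this finite-geometry stability input --- density forces embeddability into a projective plane --- is the step that genuinely goes beyond the K\H{o}v\'ari--S\'os--Tur\'an estimate, and is what Corollary~3.19 of \cite{zarankiewicz} packages.
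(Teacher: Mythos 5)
Your proposal does not actually prove the statement, and it is worth being clear about why: the paper itself offers no proof either --- the corollary is imported verbatim as ``the relevant part of Corollary 3.19 of \cite{zarankiewicz}'', so the only honest routes are to cite that result (as the appendix does) or to supply a genuine self-contained argument. Your Step 1 is the standard Reiman/K\H{o}v\'ari--S\'os--Tur\'an cherry count, and you correctly note that it falls short: even after forcing all degrees on the $B$-side into $\{n,n+1\}$, the count $\sum_b \binom{d(b)}{2}\le\binom{m}{2}$ only yields $e\le n^2(n+1)+cn+\tfrac{c(c-1)}{2n}$, an overshoot of order $n$ in the range $c\le n+1$, and for $c\gtrsim\sqrt{n}$ neither this nor the dual count on the $A$-side rules out exceeding the claimed bound. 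Everything that makes the corollary true --- closing that additive gap (your Step 2) and, in the case of equality with $c\le n+1$, completing the resulting partial linear space to a projective plane of order $n$ (your Step 3) --- is only described, not argued. Phrases such as ``a refined double count shows $t$ cannot exceed the value realised by the projective-plane construction'' and ``the equality analysis forces the partial linear space to be almost complete \dots and completing the missing lines produces a projective plane'' are restatements of the desired conclusion; the completion step in particular is a nontrivial finite-geometry stability result, not a routine verification.

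Your own closing sentence concedes this: the missing input ``is what Corollary 3.19 of \cite{zarankiewicz} packages.'' That makes the proposal circular as a proof --- its essential steps are exactly the content of the result being proved. As a description of the shape of the argument in the literature it is reasonable, but judged as a proof it has a genuine gap, namely Steps 2 and 3 in their entirety; a correct submission here should either cite the external result, as the appendix does, or reproduce the refined counting and the embedding/completion argument in full rather than gesturing at them.
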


Now we are equipped to prove that no sizeable graph has fewer than 24 vertices, giving the main theorem of this appendix.

\begin{proof}[Proof of Theorem~\ref{thm:sizeable24}]
By Corollary~\ref{cor:at_least_23} and the example of Figure~\ref{fig:sizeable24}, it only remains to rule out a sizeable graph on 23 vertices.
For $n=3$ and $c=2$, Corollary~\ref{cor:key_cor} tells us that any sizeable graph on 23 vertices, which must achieve the bound on $Z_{2,2}(11,12)$ (by tabulated values), is embedded in the incidence graph of the projective plane of order $3$.
This incidence geometry has 13 points and 13 lines.
Since moving between the incidence geometry and the incidence graph it defines can be a source of confusion, we emphasize the following: points and lines refer to the incidence geometry, whereas vertices and edges refer to the corresponding graph.
By duality (there is an automorphism of the incidence geometry that interchanges points and lines), we can assume that it is formed by removing 2 points and 1 line.
Every point is incident to 4 lines, and vice versa, so the total number of incidences is 52.
After removing two points we will have removed 8 incidences, leaving 44.
Thus the line we remove must in fact be the unique line on which the two points lie: this would mean its removal only affects the 2 remaining points on the line, leaving 42 incidences, whereas the removal of any other line would destroy at least 3 incidences.
Since the group of automorphisms of the projective plane is 2-transitive on the set of points (since $GL_3 \mathbb{F}_3$ is 2-transitive on lines in $\mathbb{F}_3^3$, being transitive on linearly independent tuples), we have only one possibility for the bipartite graph on $A$, the points, and $B$, the lines.
For concreteness, we remove the points labelled 9 and 11 in our standard figure of the projective plane of order 3, Figure~\ref{fig:full_proj_plane}, and the line they define, as shown in Figure~\ref{fig:graph23blank}.
The incidence structure left is an affine plane on the 9 points labelled from 0 to 8, together with two points at infinity, labelled by 10 and 12.

Now all that remains is to show that we cannot give $A$ and $B$ the requisite bipartite structures with 4 connected subgraphs.
First we show that there is only one possibility for the bipartition of the points.
For the induced subgraphs to be connected, we of course need every line to have degree at least 1 in the two subgraphs in which it occurs; this means precisely that each line must contain at least one point both from $A_0$ and from $A_1$.
This has a very strong consequence, as a small counting argument now reveals.
Since there can only be one line incident to a given pair of points, the total number of triples $(a_0, a_1, b)$ where points $a_0 \in A_0$ and $a_1 \in A_1$ both lie on the line $b$ is at most $\abs{A_0} \cdot \abs{A_1}$.
Counting over the lines first, we see that a line containing $k$ points creates at least $k-1$ such tuples, with equality if and only if all but 1 point on the line are in the same $A_i$.
There are 6 lines containins 4 points and 6 lines containing 3, so this gives at least $6 \times (4-1) + 6 \times (3-1) = 30$ such triples.
On the other hand, since $\abs{A} = \abs{A_0} + \abs{A_1} = 11$, the maximum value possible for $\abs{A_0} \cdot \abs{A_1}$ is $30$.
Thus without loss of generality, $\abs{A_0} = 5$ and $\abs{A_1} = 6$, and moreover no line can contain 2 points of $A_0$ and 2 points of $A_1$.

Consider now the points ``at infinity'' labelled 10 and 12.
A line passing through them contains either 0 or 2 points in the same side of the bipartition of points.
Summing over the three lines, we see that of the 9 points (labels 0 to 8) on the affine plane, an even number is contained in the same side of the partition.
Thus the two points at infinity must be in the same side as each other, and this must be $A_1$, of size 6, since it thus contains an even number of points.
Thus the three lines incident to one of these points at infinity will comprise 1 line that is entirely $A_0$ inside the affine plane, and 2 lines that have only 1 point in $A_0$.
The projective plane with the two points at infinity added admits symmetries permuting the 3 lines incident to any point at infinity, so without loss of generality, we can suppose that the lines $\{0, 1, 2, 10\}$ and $\{0, 3, 6, 12\}$ are the two relevant lines which, in the affine plane, are entirely in $A_0$.
That is, up to symmetry, there is only one possible way of forming the bipartition $A = A_0 \sqcup A_1$, which is indicated in Figure~\ref{fig:graph23bipartite}, with $A_0$ in white and $A_1$ in black.

We now consider the two induced subgraphs that involve $A_1$.
Any line that only contains one point from $A_1$ can be removed, since it will correspond to a leaf vertex in the corresponding bipartite graph, and its removal will not affect connectivity.
This leaves us with Figure~\ref{fig:graph12}.
The points 10 and 12 at infinity are now of degree 2 in the bipartite graph, so they must have degree 1 in two subgraphs.
So they will be leaves, and we can safely remove them, retaining the condition that each of them forces the two lines incident to it to be in different sides of $B = B_0 \sqcup B_1$.
The incidence geometry now has 4 points, with each pair of points defining a line (it is in fact the affine plane of order 2).
There are two pairs of non-intersecting lines must go in different sides of the partition (the condition we retained when removing the points at infinity); this forces the same of the third pair, since otherwise one side has only 2 lines and the corresponding graph has 6 vertices but only 4 edges.
Thus, up to symmetry, we have a bipartition of the lines as indicated in Figure~\ref{fig:graph10}.
The graph corresponding to the dashed lines is not connected, so we have reached the desired contradiction.
\end{proof}

\bibliographystyle{plain}
\bibliography{bib,otherref,minimal-sizeable-graphs}

\end{document}